\theoremstyle{definition}
\newtheorem{thm}{Theorem}
\newtheorem{lem}[thm]{Lemma}
\newtheorem{cor}[thm]{Corollary}
\newtheorem{exa}[thm]{Example}
\newtheorem{dfn}[thm]{Definition}
\newtheorem{rem}[thm]{Remark}
\newtheorem{que}[thm]{Question}
\newtheorem{fac}[thm]{Fact}
\newtheorem{con}[thm]{Construction}
\newtheorem{ass}[thm]{Assumption}
\newtheorem{mainlem}[thm]{Main Lemma}
\newtheorem{ques}{Question}
\newtheorem*{thmA}{Theorem A}
\newtheorem*{thmB}{Theorem B}
\newtheorem*{thmC}{Theorem C}
\newtheorem*{thmm}{Theorem}
\newtheorem*{acknowledgements}{Acknowledgements}
\newtheorem{dfnfac}[thm]{Definition and Fact}
\numberwithin{thm}{section} 
\numberwithin{equation}{section} 
\renewcommand{\a}{\alpha}
\renewcommand{\b}{\beta}
\newcommand{\p}{\mathbb{P}}
\newcommand{\br}{\mathbb{R}}
\newcommand{\qd}{\dot{\mathbb{Q}}}
\DeclareMathAlphabet{\mymathbb}{U}{BOONDOX-ds}{m}{n}
\newcommand{\zero}{\mymathbb{0}}
\newcommand{\oo}{{\omega}^\omega}
\newcommand{\ooo}{[{\omega}]^\omega}
\newcommand{\fin}{[\omega]^{<\omega}}
\newcommand{\seq}{\omega^{<\omega}}
\newcommand{\sq}{2^{<\omega}}
\newcommand{\on}{\mathpunct{\upharpoonright}}
\newcommand{\bb}{\mathfrak{b}}
\newcommand{\cc}{\mathfrak{c}}
\newcommand{\dd}{\mathfrak{d}}
\newcommand{\ee}{\mathfrak{e}}
\newcommand{\pre}{\mathfrak{pr}}
\newcommand{\hh}{\mathfrak{h}}
\newcommand{\mm}{\mathfrak{m}}
\newcommand{\pp}{\mathfrak{p}}
\newcommand{\rr}{\mathfrak{r}}
\newcommand{\spl}{\mathfrak{s}}
\newcommand{\m}{\mathcal{M}}
\newcommand{\n}{\mathcal{N}}
\DeclareMathOperator{\non}{non}
\DeclareMathOperator{\cov}{cov}
\DeclareMathOperator{\add}{add}
\DeclareMathOperator{\cof}{cof}
\newcommand{\covm}{\cov(\mathcal{M})}
\newcommand{\nonm}{\non(\mathcal{M})}
\newcommand{\addm}{\add(\mathcal{M})}
\newcommand{\cofm}{\cof(\mathcal{M})}
\newcommand{\covn}{\cov(\mathcal{N})}
\newcommand{\nonn}{\non(\mathcal{N})}
\newcommand{\addn}{\add(\mathcal{N})}
\newcommand{\cofn}{\cof(\mathcal{N})}
\newcommand{\covi}{\cov(I)}
\newcommand{\noni}{\non(I)}
\newcommand{\addi}{\add(I)}
\newcommand{\cofi}{\cof(I)}
\DeclareMathOperator{\cob}{COB}
\DeclareMathOperator{\lcu}{LCU}
\newcommand{\R}{\mathbf{R}}
\renewcommand{\lq}{\preceq_T}
\DeclareMathOperator{\dom}{dom}
\DeclareMathOperator{\ran}{ran}
\DeclareMathOperator{\cf}{cf}
\newcommand{\eeb}{\mathfrak{e}^*}
\newcommand{\preb}{\mathfrak{pr}^*}
\newcommand{\pr}{\mathbb{PR}}
\newcommand{\ev}{\mathbb{E}}
\newcommand{\bpd}{\sqsubset^\mathrm{bp}}
\newcommand{\Ad}{\mathbb{S}}
\DeclareMathOperator{\Pred}{Pred}
\title{Cicho\'n's maximum with evasion number}
\author{Takashi Yamazoe}
\address{Graduate School of System Informatics, Kobe University,
	Rokko--dai 1--1, Nada--ku, 657--8501 Kobe, Japan}
\email{212x502x@cloud.kobe-u.jp}
\thanks{This work was supported by JST SPRING, Japan Grant Number JPMJSP2148.}
\begin{document}

	\begin{abstract}
		We show that the evasion number $\ee$ can be added to Cicho\'n's maximum with a distinct value.
		More specifically, it is consistent that 
		$\aleph_1<\addn<\covn<\bb<\ee<\nonm<\covm<\dd<\nonn<\cofn<2^{\aleph_0}$ holds.
	\end{abstract}
	\maketitle

	\section{Introduction}\label{sec_int}
	\subsection{Background on Cicho\'n's maximum and evasion number}
	
	Cardinal invariants of the continuum are cardinals characterizing some structure of the continuum. Well-known examples are the \textit{bounding number} $\bb$ and the \textit{dominating number} $\dd$: 
	For $f,g\in\oo$, let $f\leq^*g$ be defined by: $ f(n)\leq g(n)$ for all but finitely many $n<\omega$. And we define $\bb$ and $\dd$ as follows:
	\begin{itemize}
		\item $\bb\coloneq\min\{|F|: F\subseteq\oo, \lnot(\exists f\in \oo~ \forall g\in F~ g\leq^*f)\}$.
		\item $\dd\coloneq\min\{|F|: F\subseteq\oo, \forall f\in \oo~ \exists g\in F~ f\leq^*g\}$.
	\end{itemize}
	Other examples are those related to an ideal on the reals:
	Let $X$ be a set and $I\subseteq\mathcal{P}(X)$ be an ideal containing all singletons. We define the following four numbers on $I$:
	\begin{itemize}
		\item $\addi:=\min\{|\mathcal{A}|:\mathcal{A}\subseteq I,~\bigcup\mathcal{A}\notin I\}$.
		\item $\covi:=\min\{|\mathcal{A}|:\mathcal{A}\subseteq I,~\bigcup\mathcal{A}=X\}$.
		\item $\noni:=\min{\{}|A|:A\subseteq X,~A\notin I\}$.
		\item $\cofi:=\min{\{}|\mathcal{A}|:\mathcal{A}\subseteq I~\forall B\in I~\exists A\in\mathcal{A}~B\subseteq A\}$.
	\end{itemize}
	The set of all Lebesgue null sets $\n$ and the set of all meager sets $\m$ are ideals on $X=\br$, so we can define the $2*4=8$ cardinal invariants. 
	
	The relationship of these $2+8=10$ cardinal invariants is illustrated in \textit{Cicho\'n's diagram} (see Figure \ref{fig_Cd}). It is said to be \textit{complete} in the sense that we cannot prove any more inequalities between two cardinal invariants in the diagram, in other words, no other arrows can be added to the diagram. Moreover, it is known that the diagram can be ``divided into two parts anywhere''. More precisely, any assignment of $\aleph_1$ and $\aleph_2$ to its numbers is consistent whenever it does not contradict the arrows and the two equations $\addm=\min\{\bb,\covm\}$ and $\cofm=\max\{\dd,\nonm\}$ (see \cite[Chapter 7]{BJ95}).
	
	\begin{figure}[ht]
		
		\centering
		\begin{tikzpicture}
			\tikzset{
				textnode/.style={text=black},
			}
			\tikzset{
				edge/.style={color=black,thin},
			}
			\newcommand{\w}{2.4}
			\newcommand{\h}{1.5}
			
			\node[textnode] (addN) at (0, 0) {$\addn$};
			\node[textnode] (covN) at (0, \h*2) {$\covn$};
			
			\node[textnode] (addM) at (\w, 0) {$\addm$};
			\node[textnode] (b) at (\w, \h) {$\bb$};
			\node[textnode] (nonM) at (\w, \h*2) {$\nonm$};
			
			\node[textnode] (covM) at (\w*2, 0) {$\covm$};
			\node[textnode] (d) at (\w*2, \h) {$\dd$};
			\node[textnode] (cofM) at (\w*2, \h*2) {$\cofm$};

			\node[textnode] (nonN) at (\w*3, 0) {$\nonn$};
			\node[textnode] (cofN) at (\w*3, \h*2) {$\cofn$};
			
			\node[textnode] (aleph1) at (-\w, 0) {$\aleph_1$};
			\node[textnode] (c) at (\w*4, \h*2) {$2^{\aleph_0}$};
			
			\draw[->,edge] (addN) to (covN);
			\draw[->,edge] (addN) to (addM);
			\draw[->,edge] (covN) to (nonM);	
			\draw[->,edge] (addM) to (b);
			\draw[->,edge] (b) to (nonM);
			\draw[->,edge] (addM) to (covM);
			\draw[->,edge] (nonM) to (cofM);
			\draw[->,edge] (covM) to (d);
			\draw[->,edge] (d) to (cofM);
			\draw[->,edge] (b) to (d);
			\draw[->,edge] (covM) to (nonN);
			\draw[->,edge] (cofM) to (cofN);
			\draw[->,edge] (nonN) to (cofN);
			\draw[->,edge] (aleph1) to (addN);
			\draw[->,edge] (cofN) to (c);
		\end{tikzpicture}
		\caption{Cicho\'n's diagram. An arrow $\mathfrak{x}\to\mathfrak{y}$ denotes that $\mathfrak{x}\leq\mathfrak{y}$ holds. Also $\addm=\min\{\bb,\covm\}$ and $\cofm=\max\{\dd,\nonm\}$ hold.}\label{fig_Cd}
	\end{figure}
	
	Since the separations of Cicho\'n's diagram with two values are well studied, we are naturally interested in the separation with more values and in this sense the ultimate question is the following:
	
	\begin{ques}
		Can we separate Cicho\'n's diagram with as many values as possible? 
		In other words, can we construct a model where all the cardinal invariants except for the two dependent numbers $\addm$ and $\cofm$ are pairwise different? 
	\end{ques}
	Such a model is called \textit{Cicho\'n's maximum} model.
	The question was positively solved by Goldstern, Kellner and Shelah \cite{GKS}, assuming four strongly compact cardinals. They constructed a model whose separation order is as in Figure \ref{fig_CM_GKS}.
	Later, they and Mej\'{\i}a \cite{GKMS} eliminated the large cardinal assumption and hence proved that Cicho\'n's maximum is consistent with ZFC.
	\begin{figure}
		\centering
		\begin{tikzpicture}
			\tikzset{
				textnode/.style={text=black}, 
			}
			\tikzset{
				edge/.style={color=black, thin, opacity=0.5}, 
			}
			\newcommand{\w}{2.4}
			\newcommand{\h}{1.5}
			
			\node[textnode] (addN) at (0,  0) {$\addn$};
			\node (t1) [fill=lime, draw, text=black, circle,inner sep=1.0pt] at (-0.25*\w, 0.5*\h) {$\theta_1$};
			
			\node[textnode] (covN) at (0,  2*\h) {$\covn$};
			\node (t2) [fill=lime, draw, text=black, circle,inner sep=1.0pt] at (0.25*\w, 1.5*\h) {$\theta_2$};

			\node[textnode] (addM) at (\w,  0) {$\cdot$};
			\node[textnode] (b) at (\w,  1*\h) {$\bb$};
			\node (t3) [fill=lime, draw, text=black, circle,inner sep=1.0pt] at (0.75*\w, 1.2*\h) {$\theta_3$};
			
			\node[textnode] (nonM) at (\w,  2*\h) {$\nonm$};
			\node (t4) [fill=lime, draw, text=black, circle,inner sep=1.0pt] at (1.25*\w, 2.4*\h) {$\theta_4$};
			
			\node[textnode] (covM) at (\w*2,  0) {$\covm$};
			\node (t5) [fill=lime, draw, text=black, circle,inner sep=1.0pt] at (1.75*\w, -0.4*\h) {$\theta_5$};
			
			\node[textnode] (d) at (\w*2,  1*\h) {$\dd$};
			\node (t6) [fill=lime, draw, text=black, circle,inner sep=1.0pt] at (2.25*\w, 0.8*\h) {$\theta_6$};
			
			\node[textnode] (cofM) at (\w*2,  2*\h) {$\cdot$};

			\node[textnode] (nonN) at (\w*3,  0) {$\nonn$};
			\node (t7) [fill=lime, draw, text=black, circle,inner sep=1.0pt] at (2.75*\w, 0.5*\h) {$\theta_7$};
			
			\node[textnode] (cofN) at (\w*3,  2*\h) {$\cofn$};
			\node (t8) [fill=lime, draw, text=black, circle,inner sep=1.0pt] at (3.25*\w, 1.5*\h) {$\theta_{8}$};
			
			\node[textnode] (aleph1) at (-\w,  0) {$\aleph_1$};
			\node[textnode] (c) at (\w*4,  2*\h) {$2^{\aleph_0}$};
			\node (tcc) [fill=lime, draw, text=black, circle,inner sep=1.0pt] at (3.7*\w, 2.5*\h) {$\theta_\cc$};

			\draw[->, edge] (addN) to (covN);
			\draw[->, edge] (addN) to (addM);
			\draw[->, edge] (covN) to (nonM);	
			\draw[->, edge] (addM) to (b);
			
			\draw[->, edge] (addM) to (covM);
			\draw[->, edge] (nonM) to (cofM);
			
			\draw[->, edge] (d) to (cofM);
			\draw[->, edge] (b) to (d);
			\draw[->, edge] (b) to (nonM);
			\draw[->, edge] (covM) to (nonN);
			\draw[->, edge] (covM) to (d);
			\draw[->, edge] (cofM) to (cofN);
			\draw[->, edge] (nonN) to (cofN);
			\draw[->, edge] (aleph1) to (addN);
			\draw[->, edge] (cofN) to (c);

			\draw[blue,thick] (-0.5*\w,1*\h)--(0.5*\w,1*\h);
			\draw[blue,thick] (2.5*\w,1*\h)--(3.5*\w,1*\h);

			\draw[blue,thick] (0.5*\w,1.5*\h)--(1.5*\w,1.5*\h);
			\draw[blue,thick] (1.5*\w,0.5*\h)--(2.5*\w,0.5*\h);

			\draw[blue,thick] (1.5*\w,-0.5*\h)--(1.5*\w,2.5*\h);
			\draw[blue,thick] (2.5*\w,-0.5*\h)--(2.5*\w,2.5*\h);

			\draw[blue,thick] (0.5*\w,-0.5*\h)--(0.5*\w,2.5*\h);
			\draw[blue,thick] (-0.5*\w,-0.5*\h)--(-0.5*\w,2.5*\h);
			\draw[blue,thick] (3.5*\w,-0.5*\h)--(3.5*\w,2.5*\h);

		\end{tikzpicture}
		\caption{Cicho\'n's maximum constructed in \cite{GKS}. $\aleph_1<\theta_1<\cdots<\theta_8<\theta_\cc$ are regular cardinals. $\addm$ and $\cofm$ are omitted as dots ``$\cdot$'' since they have dependent values.}\label{fig_CM_GKS}
	\end{figure}

	Consequently, the following natural question arises:
	
	\begin{ques}
		\label{ques_2}
		Can we add to Cicho\'n's maximum other cardinal invariants with distinct values?
	\end{ques}
	
	We can, as has been shown, e.g., in \cite{GKMS21a} (where $\mm,\pp$ and $\hh$ are added), or \cite{GKMS21b} (where $\spl$ and $\rr$ are added).
	
	In this paper, we focus on the \textit{evasion number} $\ee$, which was first introduced by Blass \cite{Bla94}. 
	
	\begin{dfn}
		\begin{itemize}
			
			\item A pair $\pi=(D,\{\pi_n:n\in D\})$ is a predictor if $D\in\ooo$ and each $\pi_n$ is a function $\pi_n\colon\omega^n\to\omega$. $\Pred$ denotes the set of all predictors.
			\item $\pi\in \Pred$ predicts $f\in\oo$ if $f(n)=\pi_n(f\on n)$ for all but finitely many $n\in D$. 
			$f$ evades $\pi$ if $\pi$ does not predict $f$.
			\item The prediction number $\pre$ and the evasion number $\ee$ are defined as follows\footnote{While the name ``prediction number'' and the notation ``$\pre$'' are not common, we use them in this paper.}:
			\begin{gather*}
				\mathfrak{pr}\coloneq\min\{|\Pi|:\Pi\subseteq \Pred,\forall f\in\oo~\exists\pi\in \Pi~ \pi \text{ predicts } f\},\\
				\mathfrak{e}\coloneq\min\{|F|:F\subseteq\oo,\forall \pi\in \Pred~\exists f\in F~ f\text{ evades }\pi\}.
			\end{gather*}
			
		\end{itemize}
	\end{dfn}
	The two numbers are embedded 
	into Cicho\'n's diagram as in Figure \ref{fig_Cd+e}.

	\begin{figure}
		\begin{tikzpicture}
			\tikzset{
				textnode/.style={text=black}, 
			}
			\tikzset{
				edge/.style={color=black, thin}, 
			}
			\tikzset{cross/.style={preaction={-,draw=white,line width=9pt}}}
			\newcommand{\w}{2.4}
			\newcommand{\h}{1.8}
			
			\node[textnode] (addN) at (0,  0) {$\addn$};
			\node[textnode] (covN) at (0,  \h*2) {$\covn$};
			
			\node[textnode] (e) at (0.5*\w,  1.3*\h) {$\mathfrak{e}$}; 
			\node[textnode] (pr) at (2.5*\w,  0.7*\h) {$\mathfrak{pr}$}; 
			
			\node[textnode] (addM) at (\w,  0) {$\addm$};
			\node[textnode] (b) at (\w,  \h) {$\bb$};
			\node[textnode] (nonM) at (\w,  \h*2) {$\nonm$};
			
			\node[textnode] (covM) at (\w*2,  0) {$\covm$};
			\node[textnode] (d) at (\w*2,  \h) {$\dd$};
			\node[textnode] (cofM) at (\w*2,  \h*2) {$\cofm$};

			\node[textnode] (nonN) at (\w*3,  0) {$\nonn$};
			\node[textnode] (cofN) at (\w*3,  \h*2) {$\cofn$};
			
			\node[textnode] (aleph1) at (-\w,  0) {$\aleph_1$};
			\node[textnode] (c) at (\w*4,  \h*2) {$2^{\aleph_0}$};
			
			\draw[->, edge] (e) to (nonM);
			\draw[->, edge] (e) to (covM);
			\draw[->, edge] (addN) to (e);
			
			\draw[->, edge] (covM) to (pr);
			\draw[->, edge] (nonM) to (pr);
			\draw[->, edge] (pr) to (cofN);
			
			\draw[->, edge] (addN) to (covN);
			\draw[->, edge] (addN) to (addM);
			\draw[->, edge] (covN) to (nonM);	
			\draw[->, edge] (addM) to (b);
			\draw[->, edge] (b) to (nonM);
			\draw[->, edge] (addM) to (covM);
			\draw[->, edge] (nonM) to (cofM);
			\draw[->, edge] (covM) to (d);
			\draw[->, edge] (d) to (cofM);
			\draw[->, edge] (b) to (d);
			\draw[->, edge] (covM) to (nonN);
			\draw[->, edge] (cofM) to (cofN);
			\draw[->, edge] (nonN) to (cofN);
			\draw[->, edge] (aleph1) to (addN);
			\draw[->, edge] (cofN) to (c);

		\end{tikzpicture}
		\caption{Cicho\'n's diagram with evasion/prediction numbers.}\label{fig_Cd+e}
		
	\end{figure}

	\subsection{Main Results}

	We prove\footnote{Goldstern, Kellner, Mej\'{\i}a and Shelah \cite{GKMS_CM_and_evasion} stated that they proved the same separation result, but later found a gap in their proof. Moreover, our method is different from theirs and consequently the constellation of Figure \ref{fig_thmC} is a specific separation result of ours.\label{foot_MR}} that they are instances answering Question \ref{ques_2}.
	More concretely, we can add the prediction and evasion numbers to Cicho\'n's maximum with distinct values as in Figure \ref{fig_CM+e}:
	
	\begin{figure}
		\centering
		\begin{tikzpicture}
			\tikzset{
				textnode/.style={text=black}, 
			}
			\tikzset{
				edge/.style={color=black, thin, opacity=0.5}, 
			}
			\newcommand{\w}{2.4}
			\newcommand{\h}{1.8}
			
			\node[textnode] (addN) at (0,  0) {$\addn$};
			\node (t1) [fill=lime, draw, text=black, circle,inner sep=1.0pt] at (-0.25*\w, 0.4*\h) {$\theta_1$};
			
			\node[textnode] (covN) at (0,  2*\h) {$\covn$};
			\node (t2) [fill=lime, draw, text=black, circle,inner sep=1.0pt] at (0.15*\w, 1.5*\h) {$\theta_2$};

			\node[textnode] (addM) at (\w,  0) {$\cdot$};
			\node[textnode] (b) at (\w,  1*\h) {$\bb$};
			\node (t3) [fill=lime, draw, text=black, circle,inner sep=1.0pt] at (0.6*\w, 0.7*\h) {$\theta_3$};
			
			\node[textnode] (e) at (0.5*\w,  1.3*\h) {$\mathfrak{e}$}; 
			\node (t4) [fill=lime, draw, text=black, circle,inner sep=1.0pt] at (0.82*\w, 1.4*\h) {$\theta_4$};
			
			\node[textnode] (nonM) at (\w,  2*\h) {$\nonm$};
			\node (t5) [fill=lime, draw, text=black, circle,inner sep=1.0pt] at (1.2*\w, 2.5*\h) {$\theta_5$};
			
			\node[textnode] (covM) at (\w*2,  0) {$\covm$};
			\node (t6) [fill=lime, draw, text=black, circle,inner sep=1.0pt] at (1.8*\w, -0.5*\h) {$\theta_6$};
			
			\node[textnode] (pr) at (2.5*\w,  0.7*\h) {$\mathfrak{pr}$}; 
			\node (t7) [fill=lime, draw, text=black, circle,inner sep=1.0pt] at (2.18*\w, 0.6*\h) {$\theta_7$};
			
			\node[textnode] (d) at (\w*2,  1*\h) {$\dd$};
			\node (t8) [fill=lime, draw, text=black, circle,inner sep=1.0pt] at (2.4*\w, 1.3*\h) {$\theta_8$};
			
			\node[textnode] (cofM) at (\w*2,  2*\h) {$\cdot$};

			\node[textnode] (nonN) at (\w*3,  0) {$\nonn$};
			\node (t9) [fill=lime, draw, text=black, circle,inner sep=1.0pt] at (2.85*\w, 0.5*\h) {$\theta_9$};
			
			\node[textnode] (cofN) at (\w*3,  2*\h) {$\cofn$};
			\node (t10) [fill=lime, draw, text=black, circle,inner sep=1.0pt] at (3.25*\w, 1.6*\h) {$\theta_{10}$};
			
			\node[textnode] (aleph1) at (-\w,  0) {$\aleph_1$};
			\node[textnode] (c) at (\w*4,  2*\h) {$2^{\aleph_0}$};
			\node (tcc) [fill=lime, draw, text=black, circle,inner sep=1.0pt] at (3.7*\w, 2.5*\h) {$\theta_\cc$};

			\draw[->, edge] (addN) to (covN);
			\draw[->, edge] (addN) to (addM);
			\draw[->, edge] (covN) to (nonM);	
			\draw[->, edge] (addM) to (b);
			
			\draw[->, edge] (addM) to (covM);
			\draw[->, edge] (nonM) to (cofM);
			
			\draw[->, edge] (d) to (cofM);
			\draw[->, edge] (b) to (d);
			\draw[->, edge] (b) to (nonM);
			\draw[->, edge] (covM) to (nonN);
			\draw[->, edge] (covM) to (d);
			\draw[->, edge] (cofM) to (cofN);
			\draw[->, edge] (nonN) to (cofN);
			\draw[->, edge] (aleph1) to (addN);
			\draw[->, edge] (cofN) to (c);
			
			\draw[->, edge] (e) to (nonM);
			\draw[->, edge] (e) to (covM);
			\draw[->, edge] (addN) to (e);
			
			\draw[->, edge] (covM) to (pr);
			\draw[->, edge] (nonM) to (pr);
			\draw[->, edge] (pr) to (cofN);

			\draw[blue,thick] (-0.5*\w,1*\h)--(0.35*\w,1*\h);
			\draw[blue,thick] (2.65*\w,1*\h)--(3.5*\w,1*\h);

			\draw[blue,thick] (0.35*\w,1.65*\h)--(1.5*\w,1.65*\h);
			\draw[blue,thick] (1.5*\w,0.35*\h)--(2.65*\w,0.35*\h);
			
			\draw[blue,thick] (0.35*\w,1.15*\h)--(1.5*\w,1.15*\h);
			\draw[blue,thick] (1.5*\w,0.85*\h)--(2.65*\w,0.85*\h);

			\draw[blue,thick] (1.5*\w,-0.5*\h)--(1.5*\w,2.5*\h);
			\draw[blue,thick] (2.65*\w,-0.5*\h)--(2.65*\w,2.5*\h);

			\draw[blue,thick] (0.35*\w,-0.5*\h)--(0.35*\w,2.5*\h);
			\draw[blue,thick] (-0.5*\w,-0.5*\h)--(-0.5*\w,2.5*\h);
			\draw[blue,thick] (3.5*\w,-0.5*\h)--(3.5*\w,2.5*\h);

		\end{tikzpicture}
		\caption{Cicho\'n's maximum with evasion/prediction numbers.}\label{fig_CM+e}
	\end{figure}
	
	\begin{thmA}(Theorem \ref{thm_p6fin})
		\label{thmA}
		The separation described in Figure \ref{fig_CM+e} consistently holds. 
	\end{thmA}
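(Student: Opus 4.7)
The plan is to build on the framework of Goldstern, Kellner, Mej\'{\i}a and Shelah \cite{GKMS}, which realises Cicho\'n's maximum via a ccc matrix-type iteration combined with an intersection-with-elementary-submodels technique that replaces the Boolean ultrapower method and thereby avoids large cardinals. Starting from a ground model of GCH, one fixes regular cardinals $\theta_1<\theta_2<\cdots<\theta_{10}<\theta_\cc$ satisfying the cofinality constraints imposed by Figure \ref{fig_CM+e}, and bookkeeps a sequence of forcings whose generics witness the lower bounds $\theta_i\leq\mathfrak{x}_i$, while appropriately chosen submodels of size $\theta_i$ give the upper bounds $\mathfrak{x}_i\leq\theta_i$. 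The genuinely new ingredient is the treatment of the two extra coordinates $\ee$ and $\pre$.

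For these coordinates I would deploy the classical evasion forcing $\ev$, which generically adds a real evading every ground-model predictor, and a dual prediction forcing $\pr$, which generically adds a predictor predicting every ground-model real, both of the type studied since Blass \cite{Bla94}. The ZFC inequalities $\bb\leq\ee\leq\nonm$ and $\covm,\nonm\leq\pre\leq\cofn$ visible in Figure \ref{fig_Cd+e} permit slotting $\ev$-steps into the block of the iteration realising $\bb<\ee<\nonm$ and $\pr$-steps into the block realising $\covm<\pre<\dd$, while the remaining eight coordinates are handled by the forcings already used in \cite{GKMS}.

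The main obstacle is proving the preservation theorems required to make the submodel method yield $\ee\leq\theta_4$ and $\pre\leq\theta_7$. Concretely, one must introduce Tukey-style relational systems for evasion and for prediction and show that each of the forcings used to separate Cicho\'n's maximum, together with $\ev$ and $\pr$ in their ``wrong-side'' blocks, preserves the relevant unbounded or covering families on one side of the diagram while cofinally destroying them on the other. Extending the Judah--Shelah-style preservation machinery of \cite{GKMS} to these two new relational systems, and fitting $\ev$ and $\pr$ into the Suslin/linkedness framework that powers the elementary-submodel argument, is where the bulk of the technical work lies. Once these preservation results are in place, the matrix iteration handles everything simultaneously: the generics force the lower bounds, the submodel intersections give the upper bounds, and the dependent invariants $\addm=\min\{\bb,\covm\}$ and $\cofm=\max\{\dd,\nonm\}$ adjust themselves automatically, realising Figure \ref{fig_CM+e}.
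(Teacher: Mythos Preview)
Your proposal misses the central technical obstacle and the paper's main new idea. In the \cite{GKMS} framework only the \emph{left} half of the diagram is separated by direct forcing; the right half (including $\pre$) is then obtained for free by the submodel method, so there is no ``block realising $\covm<\pre<\dd$'' in the iteration at all. The task reduces to placing $\ee$ at some $\lambda_4$ strictly between $\bb=\lambda_3$ and $\nonm=\lambda_5$ in a single finite-support iteration. The lower bound $\ee\ge\lambda_4$ is routine bookkeeping: cofinally often add a generic \emph{predictor} via $\pr$ --- not an evading real; you have the roles reversed, since adding an evading real raises $\pre$ while adding a predictor raises $\ee$. The hard direction is the upper bound $\ee\le\lambda_4$: one must argue that the later iterands, in particular the eventually different forcing $\mathbb{E}$ used to push $\nonm$ up to $\lambda_5$, do not drag $\ee$ along. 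This is \emph{not} covered by Judah--Shelah style goodness or by the ordinary ultrafilter-limit method of \cite{GMS16,GKS}, which only controls $\bb$. The paper introduces a strengthened linkedness notion, \emph{closed}-ultrafilter-limit linkedness (the limit map sends $Q^\omega$ back into $Q$ itself), verifies it for $\mathbb{E}$, and proves as Main Lemma~\ref{lem_Main_Lemma} that any $\kappa$-closed-uf-limit iteration forces $C_{[\gamma]^{<\kappa}}\preceq_T\mathbf{BPR}$, hence $\ee\le\eeb\le\kappa$. Closedness is essential because the argument takes ultrafilter limits \emph{iteratively} --- limits of limits of limits --- excluding possible prediction points one at a time; without closedness the first limit already leaves the linked component and no further limit can be formed.

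Two smaller corrections: the inequality $\bb\le\ee$ you invoke is not a ZFC theorem (Figure~\ref{fig_Cd+e} has no arrow between them, and $\ee<\bb$ is consistent), so the position of $\ee$ relative to $\bb$ genuinely has to be forced both ways; and once the left-side iteration yields $\mathbf{PR}\cong_T[\lambda_6]^{<\lambda_4}$, the submodel machinery of \cite{GKMS} dualises this to place $\pre$ at $\theta_7$ automatically, with no dedicated $\pre$-forcing needed.
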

	
	The construction of Cicho\'n's maximum consists of two steps: the first one is to separate the left side of the diagram with additional properties and the second one is to separate the right side (point-symmetrically) using these properties. In \cite{GKS}, the large cardinal assumption was used in the second step to apply \textit{Boolean Ultrapowers}. In \cite{GKMS}, they introduced the \textit{submodel} method instead, which is a general technique to separate the right side without large cardinals.
	
	Let us focus on the first step. The main work to separate the left side is \textit{to keep the bounding number $\bb$ small} through the forcing iteration, since the naive bookkeeping iteration to increase the cardinal invariants in the left side guarantees the smallness of the other numbers but not of $\bb$.
	To tackle the problem, in \cite{GKS} they used the \textit{ultrafilter-limit} 
	method, which was first introduced by Goldstern, Mej\'{\i}a and Shelah \cite{GMS16} to separate the left side of the diagram. 

	We introduce a new limit method called \textit{closed-ultrafilter-limit} (Definition \ref{dfn_UF_linked}), which is a variant of ultrafilter-limit, and prove that it keeps $\ee$ small:
	\begin{thmB}(Main Lemma \ref{lem_Main_Lemma})
		Closed-ultrafilter-limits keep $\ee$ small.
	\end{thmB}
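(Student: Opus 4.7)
The plan is to follow the standard pattern of ``ultrafilter-limit keeps $\mathfrak{x}$ small'' preservation theorems from \cite{GMS16,GKS}: starting from a forcing $\mathbb{Q}$ with closed-ultrafilter-limits, I will show that for any $\mathbb{Q}$-name $\dot\pi$ for a predictor and any condition $p\in\mathbb{Q}$, there exist a stronger condition $q\leq p$ and a ground-model function $f\in\oo$ such that $q\Vdash$``$f$ evades $\dot\pi$''. Once this single-step preservation is established, an ordinary bookkeeping argument along the associated finite-support iteration of such forcings shows that a ground-model family of size $<\ee^V$ remains an evading family in the full extension, so that $\ee$ stays small.

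First I would fix $\dot\pi=(\dot D,\{\dot\pi_n:n\in\dot D\})$ and $p$, and, for each $s\in\seq$, use a density argument below $p$ to extract a condition $p_s\leq p$ and an integer $k_s<\omega$ such that $p_s\Vdash$``$|s|\in\dot D$ and $\dot\pi_{|s|}(s)=k_s$''. This produces a ground-model ``shadow predictor'' $s\mapsto k_s$. In parallel I would arrange the $p_s$'s into the tree-indexed configuration that the closed-ultrafilter-limit machinery of Definition \ref{dfn_UF_linked} is designed to consume, and produce the corresponding limit condition $q^*\leq p$.

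Next I would define $f\in\oo$ in the ground model by the diagonal recursion $f(n):=k_{f\on n}+1$, so that $f(n)\neq k_{f\on n}$ for every $n<\omega$ by construction. The task is then to verify that $q^*\Vdash$``$f$ evades $\dot\pi$'', i.e.\ that $q^*$ forces the set $\{n\in\dot D:\dot\pi_n(f\on n)\neq f(n)\}$ to be infinite. The closed-ultrafilter-limit property should be precisely what allows us to transfer, along the single branch $\langle f\on n:n<\omega\rangle$, the ground-model equalities $\dot\pi_{|s|}(s)=k_s$ into the generic extension on an infinite set of indices $n$, even though only the shadow values $k_{f\on n}$ were decided by the individual $p_{f\on n}$'s and not by $q^*$ directly.

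The main obstacle I expect is exactly this transfer. Unlike the bounding-number case treated in \cite{GMS16,GKS}, where a single coordinate at a time is diagonalised, the predictor $\dot\pi$ must be diagonalised against along a specific branch of $\seq$ that is itself defined recursively from the shadow values $k_s$; consequently the limit condition $q^*$ must simultaneously ``remember'' the shadow values for \emph{every} potential branch of $\seq$, not just for a single cofinal sequence of conditions. I anticipate that the ``closed'' strengthening of the usual ultrafilter-limit is tailor-made for this tree-indexed situation, guaranteeing that, for each branch, the set of coordinates on which the limit matches the shadow is forced to be infinite inside $\dot D$. Once this tree-version of the limit lemma is in hand, the construction of $f$ and the verification of evasion become a formal consequence, and the preservation of closed-ultrafilter-limits under finite-support iteration should follow the template already established for the ordinary ultrafilter-limit in \cite{GMS16,GKS}.
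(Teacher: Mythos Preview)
Your framework has a genuine gap at the very first step. The single-step preservation you aim for --- ``given a $\mathbb{Q}$-name $\dot\pi$ and $p\in\mathbb{Q}$, find $q\leq p$ and a \emph{ground-model} $f$ with $q\Vdash f$ evades $\dot\pi$'' --- is simply false for the iterations in question, and so is the conclusion ``a ground-model evading family survives''. The iteration of the Main Lemma is a $\kappa$-$\Lambda^\mathrm{lim}_\mathrm{cuf}$-iteration which, in the intended application (Construction~\ref{con_P6}), contains the prediction forcing $\mathbb{PR}$ at cofinally many stages; $\mathbb{PR}$ generically adds a predictor that predicts \emph{every} real of the ground model. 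Hence no ground-model real evades all generic predictors, and $\ee$ is genuinely pushed up to $\kappa$ --- the lemma only bounds it \emph{from above} by $\kappa$, it does not preserve the ground-model value. The witnesses for smallness must therefore be reals produced by the iteration itself: the paper uses the first $\kappa$ Cohen reals $\langle\dot c_\beta:\beta<\kappa\rangle$ and shows that any single $\dot\pi$ can (bounding-)predict fewer than $\kappa$ of them, which is exactly the statement $C_{[\gamma]^{<\kappa}}\lq\mathbf{BPR}$.

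You have also misidentified the role of closedness. It is not a device for a single ``tree-indexed'' limit; the limit operator $\lim^D$ acts only on \emph{sequences}. Closedness is needed because the argument takes limits \emph{iteratively}, feeding the output of one limit in as input to the next. Concretely: after a $\Delta$-system refinement one obtains, for each $\sigma\in\omega^n$, a condition $q_\sigma$ forcing a Cohen real with stem $s^\frown\sigma$ to be bounding-predicted by $\dot\pi$ from level $n^*$. Fixing $\tau\in\omega^{n-1}$ and taking the limit of $\langle q_{\tau^\frown m}:m<\omega\rangle$ yields $q^\infty_\tau$ forcing $n^*+n-1\notin\dot A$; by closedness $q^\infty_\tau$ still follows the common guardrail, so one may repeat with $\langle q^\infty_{\rho^\frown m}:m<\omega\rangle$ for $\rho\in\omega^{n-2}$, excluding $n^*+n-2$ as well. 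After $n$ rounds one has a condition forcing $[n^*,n^*+n)\cap\dot A=\emptyset$, and a final limit over $n$ contradicts $|\dot A|=\omega$. Without closedness the first limit already leaves the linked component and the second limit cannot be formed; this iterated use, not any tree-version of Definition~\ref{dfn_UF_linked}, is the content of the ``closed'' hypothesis.
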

	We also prove that the two ultrafilter-limit methods can be mixed and obtain the separation model of Figure \ref{fig_CM+e} (Theorem A, Theorem \ref{thm_p6fin}).
	
	Moreover, we prove that we can control the values of the following variants of the evasion/prediction numbers:

	\begin{dfn}
		\label{dfn_variant}
		\begin{enumerate}
			\item A predictor $\pi$ \textit{bounding-predicts} $f\in\oo$ if $f(n)\leq\pi(f\on n)$ for all but finitely many $n\in D$.
			$\preb$ and $\eeb$ denote the prediction/evasion number respectively with respect to the bounding-prediction.
			\item Let $g\in\left(\omega+1\setminus2\right)^\omega$. ( ``$\setminus2$'' is required to exclude trivial cases.)
				\textit{$g$-prediction} is the prediction where the range of functions $f$ is restricted to $\prod_{n<\omega}g(n)$ and $\pre_g$ and $\ee_g$ denote the prediction/evasion number respectively with respect to the $g$-prediction.
				Namely,
				\begin{gather*}
					\mathfrak{pr}_g\coloneq\min\{|\Pi|:\Pi\subseteq \Pred,\forall f\in\textstyle{\prod_{n<\omega}g(n)}~\exists\pi\in \Pi~\pi \text{ predicts } f\},\\
					\mathfrak{e}_g\coloneq\min\{|F|:F\subseteq\textstyle{\prod_{n<\omega}g(n)},\forall \pi\in \Pred~\exists f\in F~ f\text{ evades }\pi\}.
				\end{gather*}
				Define:
				\begin{gather*}
					\mathfrak{pr}_{ubd}\coloneq\sup\left\{\mathfrak{pr}_g:g\in\left(\omega\setminus2\right)^\omega\right\},\\
					\ee_{ubd}\coloneq\min\left\{\ee_g:g\in\left(\omega\setminus2\right)^\omega\right\}.
				\end{gather*}

			\end{enumerate}
			
		\end{dfn}
		
		The new numbers can be embedded into the diagram as in Figure \ref{fig_Cd_all}. 
		We obtain the following separation result:
		\begin{thmC}(Theorem \ref{thm_p_fin_prime_GCH})
			\label{thmC}
			The separation described in Figure \ref{fig_thmC} consistently holds.
		\end{thmC}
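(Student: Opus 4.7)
The plan is to extend the construction of Theorem A to accommodate the additional variants $\eeb,\preb,\ee_{ubd},\pre_{ubd}$. The starting observation is that the variants sit in the Cichon diagram at prescribed positions: one expects $\ee_{ubd}\le\ee\le\preb\le\pre$ (and similarly for the bounded versions) with the classical bounds above and below, so Figure \ref{fig_thmC} should assign the variants values distinct from those of $\ee$ and $\pre$ while respecting these inequalities. Once the positions are fixed, the proof is an elaboration of the iteration built for Theorem A: one inflates the bookkeeping so that each step targets one of the invariants to be separated, and one verifies that the limit machinery controls \emph{all} the small-side invariants simultaneously.

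First, I would upgrade the Main Lemma (Theorem B) to cover the variants. The definition of closed-ultrafilter-limit (Definition \ref{dfn_UF_linked}) and the proof that it keeps $\ee$ small should work, essentially verbatim, for $\eeb$ and for each $\ee_g$ with $g\in(\omega\setminus2)^\omega$, because the combinatorics used to freeze a candidate evader only relies on the prediction relation via finitely-many-exceptions clauses, which are shared by all variants. Taking the infimum over $g$ then shows that closed-ultrafilter-limits keep $\ee_{ubd}$ small as well. Dually, the ordinary ultrafilter-limits of \cite{GMS16,GKS} that keep $\bb$ small can be reused to bound $\pre$ and $\pre_{ubd}$ from above at the appropriate stage.

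Next, I would design a finite-support (or, more precisely, ultrafilter-limit) iteration over a ground model satisfying GCH, starting from a suitable collapse/Cohen preparation, whose constituents are: the standard amoeba/random/Cohen/Hechler-style forcings used in \cite{GKS,GKMS} to push up $\covn,\bb,\nonm,\covm,\dd,\nonn$; plus evasion-type forcings (in the style of Blass \cite{Bla94}) added to push up $\ee$ and $\ee_{ubd}$ at their designated stages; plus $\sigma$-centered prediction forcings to push up $\preb,\pre,\pre_{ubd}$ on the right. The mixing of closed-ultrafilter-limits and ordinary ultrafilter-limits, already used in Theorem A to keep both $\bb$ and $\ee$ small below their targets, is applied here at each of the separating levels determined by the variants, choosing the limit type at stage $\alpha$ according to which invariant is currently being bounded from above. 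After the left-hand separation is achieved, the submodel method of \cite{GKMS} is applied to the resulting iteration to produce distinct values on the right without large cardinals.

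The main obstacle I expect is the simultaneous preservation step: one must verify that a single mixed iteration keeps all of $\ee_{ubd},\ee,\bb$ small below their assigned values $\theta$'s while the forcings added at later stages (which are tailored to blow up $\nonm,\dd,\pre$, etc.) do not accidentally add new evaders that sabotage the upper bound on $\ee$ or $\ee_{ubd}$. This reduces to checking that the forcings listed above lie in the class of posets along which closed-ultrafilter-limits are preserved; concretely, one has to redo the inductive preservation argument for the Main Lemma with the stronger hypothesis that the predictor built by the limit predicts every $g$-bounded real from the extension, uniformly in $g$. Once this uniform preservation is in place, the rest of the proof is bookkeeping and an application of the submodel technique, exactly as in \cite{GKMS} and Theorem \ref{thm_p6fin}.
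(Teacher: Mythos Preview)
Your plan contains a genuine error that breaks the argument. You assert that the Main Lemma extends ``essentially verbatim'' to each $\ee_g$ with $g\in(\omega\setminus2)^\omega$, and hence to $\ee_{ubd}$. It does not. The heart of Lemma~\ref{lem_Prediction_Point_Exclusion_Lemma} is that the conditions $p_m$ force $c(j-1)=m$ for \emph{every} $m<\omega$; the ultrafilter limit then excludes $j-1$ from $\dot A$ because $\dot\pi(t)$ cannot bound infinitely many $m$. For $g$-prediction with $g(j-1)<\omega$ there are only finitely many admissible values of $c(j-1)$, so no such infinite family of $p_m$ exists and the exclusion step collapses. The paper notes this explicitly in the Remark following Lemma~\ref{lem_Prediction_Point_Exclusion_Lemma}, and in fact $\pr_g$ itself has closed-ultrafilter-limits (Corollary~\ref{cor_PR_is_sigma_centered_suff}) while increasing $\ee_g$, so closed-uf-limits \emph{cannot} keep $\ee_{ubd}$ small. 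You also have the inequality backwards: $\ee\le\ee_{ubd}$, not $\ee_{ubd}\le\ee$.

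The paper's route is different and simpler. One does \emph{not} try to bound $\ee_{ubd}$ via a limit method at all; instead one replaces the iterand $\mathbb{E}$ by the forcings $\pr_g$, bookkeeping over all $g\in(\omega\setminus2)^\omega$. These are $\sigma$-(centered)-c-uf-lim-linked, so the Main Lemma still caps $\eeb$ (hence $\ee$) at $\lambda_4$ and ordinary uf-limits still cap $\bb$ at $\lambda_3$. They increase each $\ee_g$, hence $\ee_{ubd}$ (and therefore $\nonm$), to $\lambda_5$; the upper bound $\ee_{ubd}\le\lambda_5$ comes only from smallness of the iterands via Corollary~\ref{cor_smallness_for_addn_and_covn_and_nonm}, not from any limit argument. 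In particular $\ee_{ubd}$ and $\nonm$ receive the \emph{same} value $\theta_5$ in Figure~\ref{fig_thmC}; separating them is precisely the open Question~\ref{que_last}.
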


		\begin{figure}
			\centering
			\begin{tikzpicture}
				\tikzset{
					textnode/.style={text=black}, 
				}
				\tikzset{
					edge/.style={color=black, thin}, 
				}
				\newcommand{\w}{2.4}
				\newcommand{\h}{2.0}
				
				\node[textnode] (addN) at (0,  0) {$\addn$};
				
				\node[textnode] (covN) at (0,  \h*3) {$\covn$};

				\node[textnode] (addM) at (\w,  0) {$\cdot$};
				\node[textnode] (b) at (\w,  1.3*\h) {$\bb$};
				
				\node[textnode] (nonM) at (\w,  \h*3) {$\nonm$};

				\node[textnode] (covM) at (\w*2,  0) {$\covm$};
				
				\node[textnode] (d) at (\w*2,  1.7*\h) {$\dd$};
				\node[textnode] (cofM) at (\w*2,  \h*3) {$\cdot$};

				\node[textnode] (nonN) at (\w*3,  0) {$\nonn$};
				
				\node[textnode] (cofN) at (\w*3,  \h*3) {$\cofn$};
				
				\node[textnode] (aleph1) at (-\w,  0) {$\aleph_1$};
				\node[textnode] (c) at (\w*4,  \h*3) {$2^{\aleph_0}$};
				
				\node[textnode] (e) at (0.5*\w,  1.7*\h) {$\mathfrak{e}$}; 
				\node[textnode] (estar) at (\w,  1.7*\h) {$\ee^*$};

				\node[textnode] (pr) at (2.5*\w,  1.3*\h) {$\mathfrak{pr}$}; 
				\node[textnode] (prstar) at (\w*2,  1.3*\h) {$\mathfrak{pr}^*$};
				
				\node[textnode] (eubd) at (\w*0.75,  \h*2.45) {$\ee_{ubd}$};
				
				\node[textnode] (prubd) at (\w*2.25,  \h*0.55) {$\mathfrak{pr}_{ubd}$};

				\draw[->, edge] (addN) to (covN);
				\draw[->, edge] (addN) to (addM);
				\draw[->, edge] (covN) to (nonM);	
				\draw[->, edge] (addM) to (b);
				\draw[->, edge] (addM) to (covM);
				\draw[->, edge] (nonM) to (cofM);
				\draw[->, edge] (d) to (cofM);
				\draw[->, edge] (b) to (prstar);
				\draw[->, edge] (covM) to (nonN);
				\draw[->, edge] (cofM) to (cofN);
				\draw[->, edge] (nonN) to (cofN);
				\draw[->, edge] (aleph1) to (addN);
				\draw[->, edge] (cofN) to (c);
				
				\draw[->, edge] (e) to (covM);
				\draw[->, edge] (addN) to (e);
				
				\draw[->, edge] (covM) to (prstar);
				\draw[->, edge] (covM) to (prubd);
				\draw[->, edge] (covN) to (prubd);
				\draw[->, edge] (nonM) to (pr);
				\draw[->, edge] (pr) to (cofN);
				
				\draw[->, edge] (e) to (estar);
				\draw[->, edge] (b) to (estar);C
				\draw[->, edge] (estar) to (nonM);
				\draw[->, edge] (estar) to (d);
				\draw[->, edge] (e) to (eubd);
				
				\draw[->, edge] (prstar) to (d);
				\draw[->, edge] (prstar) to (pr);
				
				\draw[->, edge] (prstar) to (pr);
				\draw[->, edge] (prubd) to (pr);
				\draw[->, edge] (eubd) to (nonM);
				\draw[->, edge] (eubd) to (nonN);

			\end{tikzpicture}
			\caption{Cicho\'n's diagram with the variants of evasion/prediction numbers.}\label{fig_Cd_all}
		\end{figure}

		\begin{figure}
			\centering
			\begin{tikzpicture}
				\tikzset{
					textnode/.style={text=black}, 
				}
				\tikzset{
					edge/.style={color=black, thin, opacity=0.5}, 
				}
				\newcommand{\w}{2.4}
				\newcommand{\h}{2.0}
				
				\node[textnode] (addN) at (0,  0) {$\addn$};
				\node (t1) [fill=lime, draw, text=black, circle,inner sep=1.0pt] at (-0.2*\w, 0.8*\h) {$\theta_1$};
				
				\node[textnode] (covN) at (0,  \h*3) {$\covn$};
				\node (t2) [fill=lime, draw, text=black, circle,inner sep=1.0pt] at (0.15*\w, 2.45*\h) {$\theta_2$};

				\node[textnode] (addM) at (\w,  0) {$\cdot$};
				\node[textnode] (b) at (\w,  1.3*\h) {$\bb$};
				\node (t3) [fill=lime, draw, text=black, circle,inner sep=1.0pt] at (0.68*\w, 1.2*\h) {$\theta_3$};
				
				\node[textnode] (nonM) at (\w,  \h*3) {$\nonm$};
				\node (t5) [fill=lime, draw, text=black, circle,inner sep=1.0pt] at (1.0*\w, 3.3*\h) {$\theta_5$};
				
				\node[textnode] (covM) at (\w*2,  0) {$\covm$};
				\node (t6) [fill=lime, draw, text=black, circle,inner sep=1.0pt] at (2.0*\w, -0.3*\h) {$\theta_6$};
				
				\node[textnode] (d) at (\w*2,  1.7*\h) {$\dd$};
				\node (t8) [fill=lime, draw, text=black, circle,inner sep=1.0pt] at (2.32*\w, 1.8*\h) {$\theta_{8}$};
				\node[textnode] (cofM) at (\w*2,  \h*3) {$\cdot$};

				\node[textnode] (nonN) at (\w*3,  0) {$\nonn$};
				\node (t9) [fill=lime, draw, text=black, circle,inner sep=1.0pt] at (2.85*\w, 0.55*\h) {$\theta_{9}$};
				
				\node[textnode] (cofN) at (\w*3,  \h*3) {$\cofn$};
				\node (t10) [fill=lime, draw, text=black, circle,inner sep=1.0pt] at (3.25*\w, 2.2*\h) {$\theta_{10}$};
				
				\node[textnode] (aleph1) at (-\w,  0) {$\aleph_1$};
				\node[textnode] (c) at (\w*4,  \h*3) {$2^{\aleph_0}$};
				\node (t10) [fill=lime, draw, text=black, circle,inner sep=1.0pt] at (3.67*\w, 3.4*\h) {$\theta_\cc$};
				
				\node[textnode] (e) at (0.5*\w,  1.7*\h) {$\mathfrak{e}$}; 
				\node[textnode] (estar) at (\w,  1.7*\h) {$\ee^*$};
				\node (t4) [fill=lime, draw, text=black, circle,inner sep=1.0pt] at (0.75*\w, 1.9*\h) {$\theta_4$};

				\node[textnode] (pr) at (2.5*\w,  1.3*\h) {$\mathfrak{pr}$}; 
				\node[textnode] (prstar) at (\w*2,  1.3*\h) {$\mathfrak{pr}^*$};
				\node (t9) [fill=lime, draw, text=black, circle,inner sep=1.0pt] at (2.25*\w, 1.1*\h) {$\theta_7$};
				
				\node[textnode] (eubd) at (\w*0.75,  \h*2.45) {$\ee_{ubd}$};
				
				\node[textnode] (prubd) at (\w*2.25,  \h*0.55) {$\mathfrak{pr}_{ubd}$};

				\draw[->, edge] (addN) to (covN);
				\draw[->, edge] (addN) to (addM);
				\draw[->, edge] (covN) to (nonM);	
				\draw[->, edge] (addM) to (b);
				\draw[->, edge] (addM) to (covM);
				\draw[->, edge] (nonM) to (cofM);
				\draw[->, edge] (d) to (cofM);
				\draw[->, edge] (b) to (prstar);
				\draw[->, edge] (covM) to (nonN);
				\draw[->, edge] (cofM) to (cofN);
				\draw[->, edge] (nonN) to (cofN);
				\draw[->, edge] (aleph1) to (addN);
				\draw[->, edge] (cofN) to (c);
				
				\draw[->, edge] (e) to (covM);
				\draw[->, edge] (addN) to (e);
				
				\draw[->, edge] (covM) to (prstar);
				\draw[->, edge] (covM) to (prubd);
				\draw[->, edge] (nonM) to (pr);
				\draw[->, edge] (covN) to (prubd);
				\draw[->, edge] (pr) to (cofN);
				
				\draw[->, edge] (e) to (estar);
				\draw[->, edge] (b) to (estar);C
				\draw[->, edge] (estar) to (nonM);
				\draw[->, edge] (eubd) to (nonN);
				\draw[->, edge] (estar) to (d);
				\draw[->, edge] (e) to (eubd);
				
				\draw[->, edge] (prstar) to (d);
				\draw[->, edge] (prstar) to (pr);
				
				\draw[->, edge] (prstar) to (pr);
				\draw[->, edge] (prubd) to (pr);
				\draw[->, edge] (eubd) to (nonM);
				%
				%
				
				\draw[blue,thick] (-0.5*\w,1.5*\h)--(3.5*\w,1.5*\h);
				\draw[blue,thick] (1.5*\w,-0.5*\h)--(1.5*\w,3.5*\h);
				
				\draw[blue,thick] (-0.5*\w,-0.5*\h)--(-0.5*\w,3.5*\h);
				\draw[blue,thick] (3.5*\w,-0.5*\h)--(3.5*\w,3.5*\h);
				
				\draw[blue,thick] (0.5*\w,-0.5*\h)--(0.5*\w,1.5*\h);
				\draw[blue,thick] (2.5*\w,1.5*\h)--(2.5*\w,3.5*\h);
				
				\draw[blue,thick] (0.3*\w,2.25*\h)--(1.5*\w,2.25*\h);
				\draw[blue,thick] (0.3*\w,1.5*\h)--(0.3*\w,2.25*\h);
				\draw[blue,thick] (0.5*\w,2.25*\h)--(0.5*\w,3.5*\h);
				
				\draw[blue,thick] (2.7*\w,0.8*\h)--(1.5*\w,0.8*\h);
				\draw[blue,thick] (2.7*\w,1.5*\h)--(2.7*\w,0.8*\h);
				\draw[blue,thick] (2.5*\w,0.8*\h)--(2.5*\w,-0.5*\h);

			\end{tikzpicture}
			\caption{Separation constellation of Theorem C.}\label{fig_thmC}
		\end{figure}

		\subsection{Structure of the paper}
		In Section \ref{sec_RS_PT}, we review the relational systems, the Tukey order and the general preservation theory of fsi (finite support iteration), such as \textit{goodness}. In Section \ref{sec_uflimit}, we present the notion of \textit{ultrafilter-limit}, which was first introduced in \cite{GMS16}. 
		Also, we introduce the new notion \textit{closed-ultrafilter-limit} 
		and prove Theorem B, which is specific for this new limit notion.
		In Section \ref{sec_separation}, we present the application of Theorem B and prove the separation results Theorem A and Theorem C.
		Finally, we conclude the paper leaving some open questions presented in Section \ref{sec_question}.
		
		\section{Relational systems and preservation theory} \label{sec_RS_PT}
		\begin{dfn}
			
			\begin{itemize}
				\item $\R=\langle X,Y,\sqsubset\rangle$ is a relational system if $X$ and $Y$ are non-empty sets and $\sqsubset \subseteq X\times Y$.
				\item We call an element of $X$ a \textit{challenge}, an element of $Y$ a \textit{response}, and ``$x\sqsubset y$''  ``$x$ is \textit{met by }$y$''.
				\item $F\subseteq X$ is $\R$-unbounded if no response meets all challenges in $F$.
				\item $F\subseteq Y$ is $\R$-dominating if every challenge is met by some response in $F$.
				\item $\R$ is non-trivial if $X$ is $\R$-unbounded and $Y$ is $\R$-dominating. For non-trivial $\R$, define
				\begin{itemize}
					\item $\bb(\R)\coloneq\min\{|F|:F\subseteq X \text{ is }\R\text{-unbounded}\}$, and
					\item $\dd(\R)\coloneq\min\{|F|:F\subseteq Y \text{ is }\R\text{-dominating}\}$.
				\end{itemize}

			\end{itemize}
			
		\end{dfn}
		In this section, we assume $\R$ is non-trivial.
		
		\begin{dfnfac}
			\label{dfnfac_RS}
			\begin{enumerate}
				\item For $\mathbf{D}\coloneq\langle \oo,\oo,\leq^*\rangle$, we get $\bb(\mathbf{D})=\bb, \dd(\mathbf{D})=\dd$.
				\item 
				Define $\mathbf{PR}\coloneq\langle\oo, \Pred,\sqsubset^\mathrm{p}\rangle$, where $f\sqsubset^\mathrm{p}\pi:\Leftrightarrow f$ is predicted by $\pi$.
				Also, define $\mathbf{BPR}\coloneq\langle\oo, \Pred,\sqsubset^\mathrm{bp}\rangle$, where $f\sqsubset^\mathrm{bp}\pi:\Leftrightarrow f$ is bounding-predicted by $\pi$ and $\mathbf{PR}_g\coloneq\langle\prod_{n<\omega}g(n), \Pred,\sqsubset^\mathrm{p}\rangle$  where $g\in(\omega+1\setminus2)^\omega$. We have $\bb(\mathbf{PR})=\ee, \dd(\mathbf{PR})=\pre$, $\bb(\mathbf{BPR})=\eeb, \dd(\mathbf{BPR})=\preb$, $\bb(\mathbf{PR}_g)=\ee_g, \dd(\mathbf{PR}_g)=\pre_g$.
				\item\label{item_dfnfac_RS_ideal}
				For an ideal $I$ on $X$, define two relational systems $\bar{I}\coloneq\langle I,I,\subseteq\rangle$ and $C_I\coloneq\langle X,I,\in\rangle$.
				We have $\bb(\bar{I})=\addi,\dd(\bar{I})=\cofi$ and $\bb(C_I)=\noni,~\dd(C_I)=\covi$.
				If $I$ is an ideal, then we will write $\R\lq I$ to mean $\R\lq\bar{I}$; and analogously for $\succeq_T$ and $\cong_T$.
			\end{enumerate}
			
		\end{dfnfac}
		
		\begin{dfn}
			$\R^\bot$ denotes the dual of $\R=\langle X,Y,\sqsubset\rangle$,
			i.e.,
			$\R^\bot\coloneq\langle Y,X,\sqsubset^\bot\rangle$ where $y\sqsubset^\bot x:\Leftrightarrow \lnot(x\sqsubset y)$.
		\end{dfn}

		\begin{dfn}
			
			For relational systems $\R=\langle X,Y,\sqsubset \rangle, \R^{\prime}=\langle X^{\prime},Y^{\prime},\sqsubset^{\prime}~\rangle$,
			$(\Phi_-,\Phi_+):\R\rightarrow\R^\prime$ is a Tukey connection from $\R$ into $\R^{\prime}$ if $\Phi_-:X\rightarrow X^{\prime}$ and $\Phi_+:Y^{\prime}\rightarrow Y$ are functions such that:
			\begin{equation*}
				\forall x\in X~\forall y^{\prime}\in Y^{\prime}~\Phi_-(x)\sqsubset^{\prime} y^{\prime}\Rightarrow x \sqsubset \Phi_{+} (y^{\prime}).
			\end{equation*}

			We write $\R\preceq_T\R^{\prime}$ if there is a Tukey connection from $\R$ into $\R^{\prime}$ and call $\preceq_T$ the Tukey order.
			Tukey equivalence $\R\cong_T\R^{\prime}$ is defined as: $\R\preceq_T\R^{\prime}$ and $\R^{\prime}\preceq_T\R$. 
			
		\end{dfn}
		
		\begin{fac}
			\label{Tukey order and b and d}
			\begin{enumerate}
				\item $\R\preceq_T\R^{\prime}$ implies $(\R^{\prime})^\bot\preceq_T\R^\bot$.
				\item $\R\preceq_T\R^{\prime}$ implies $\mathfrak{b}(\R^{\prime})\leq\mathfrak{b}(\R)$ and $\mathfrak{d}(\R)\leq\mathfrak{d}(\R^{\prime})$.
				\item $\bb(\R^\bot)=\dd(\R)$ and $\dd(\R^\bot)=\bb(\R^\bot)$.
			\end{enumerate}
		\end{fac}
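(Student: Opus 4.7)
The plan is to prove both parts by direct manipulation of the definitions; neither presents a real obstacle, but I keep track of the logical bookkeeping carefully because part (1) hinges on a contrapositive where a sign error is easy to make. Part (1) dualizes the Tukey connection by swapping its two maps, and part (2) transports witnesses of unboundedness and domination through the connection.

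For part (1), let $(\Phi_-,\Phi_+)\colon\R\to\R^{\prime}$ be the given Tukey connection. I would define $(\Psi_-,\Psi_+)\colon(\R^{\prime})^\bot\to\R^\bot$ by setting $\Psi_-\coloneq\Phi_+\colon Y^{\prime}\to Y$ and $\Psi_+\coloneq\Phi_-\colon X\to X^{\prime}$; note these are precisely the signatures required by the dualized relational systems, since dualization swaps the challenge and response sides. Unwinding the definition of the dual relations, the condition to verify reads: for all $y^{\prime}\in Y^{\prime}$ and $x\in X$, $\lnot(x\sqsubset\Phi_+(y^{\prime}))$ implies $\lnot(\Phi_-(x)\sqsubset^{\prime}y^{\prime})$, whose contrapositive is exactly the Tukey-connection hypothesis on $(\Phi_-,\Phi_+)$.

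For part (2), I would argue each inequality directly from the connection. To see $\bb(\R^{\prime})\leq\bb(\R)$, take an $\R$-unbounded $F\subseteq X$ of size $\bb(\R)$ and show that $\Phi_-[F]\subseteq X^{\prime}$ is $\R^{\prime}$-unbounded: if some $y^{\prime}\in Y^{\prime}$ responded every member of $\Phi_-[F]$, the Tukey property would yield $x\sqsubset\Phi_+(y^{\prime})$ for all $x\in F$, contradicting $\R$-unboundedness of $F$; hence $\bb(\R^{\prime})\leq|\Phi_-[F]|\leq|F|=\bb(\R)$. Symmetrically, given an $\R^{\prime}$-dominating $G\subseteq Y^{\prime}$ of size $\dd(\R^{\prime})$, the image $\Phi_+[G]\subseteq Y$ is $\R$-dominating, since for any $x\in X$ one can pick $y^{\prime}\in G$ with $\Phi_-(x)\sqsubset^{\prime}y^{\prime}$ and apply the Tukey property to conclude $x\sqsubset\Phi_+(y^{\prime})\in\Phi_+[G]$; hence $\dd(\R)\leq|\Phi_+[G]|\leq|G|=\dd(\R^{\prime})$.

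As a side remark, one half of (2) could instead be deduced from (1) via the easy identities $\bb(\R^\bot)=\dd(\R)$ and $\dd(\R^\bot)=\bb(\R)$, which fall out immediately from the definition of the dual (e.g., $F\subseteq Y$ is $\R^\bot$-unbounded precisely when $F$ is $\R$-dominating). Since the direct proofs above are equally transparent and save no meaningful length, I would present them as given rather than route through this reduction.
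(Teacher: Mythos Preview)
Your proof is correct. The paper states this as a \textbf{Fact} without proof, so there is no argument to compare against; your direct verification from the definitions is exactly the standard one and is complete as written.
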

		
		In the rest of this section, we fix an uncountable regular cardinal $\theta$ and a set $A$ of size $\geq\theta$. $[A]^{<\theta}$ is an ideal on $A$, so $C_{[A]^{<\theta}}$ is a relational system as in Definition and Fact \ref{dfnfac_RS}\eqref{item_dfnfac_RS_ideal} and $\bb(C_{[A]^{<\theta}})=\theta$ and $\dd(C_{[A]^{<\theta}})=|A|$. 
		For a relational system $\R$, we can calculate $\bb(\R)$ and $\dd(\R)$ from ``outside'' and ``inside'', using this  $C_{[A]^{<\theta}}$:
		\begin{cor}
			\textcolor{white}{a}
			\begin{itemize}
				\item[(outside)] If $\R\lq C_{[A]^{<\theta}}$, then $\theta\leq\bb(\R)$ and $\dd(\R)\leq|A|$.
				\item[(inside)] If $C_{[A]^{<\theta}}\lq \R$, then $\bb(\R)\leq\theta$ and $|A|\leq\dd(\R)$.
			\end{itemize}
		\end{cor}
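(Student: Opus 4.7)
The plan is to reduce the corollary to a direct application of Fact \ref{Tukey order and b and d}(2), after first pinning down the concrete values of $\bb(C_{[A]^{<\theta}})$ and $\dd(C_{[A]^{<\theta}})$. By Example \ref{exa_RS}, these are $\non([A]^{<\theta})$ and $\cov([A]^{<\theta})$ respectively, so the whole task reduces to two short cardinal-arithmetic computations followed by a bookkeeping step.

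For the first computation, $F\subseteq A$ fails to belong to $[A]^{<\theta}$ exactly when $|F|\geq\theta$, so $\non([A]^{<\theta})=\theta$ whenever $|A|\geq\theta$. For the second, any family $\mathcal{F}\subseteq[A]^{<\theta}$ with $|\mathcal{F}|<|A|$ satisfies $|\bigcup\mathcal{F}|\leq|\mathcal{F}|\cdot\theta=\max(|\mathcal{F}|,\theta)$; if $\theta<|A|$ this max is $<|A|$ since both terms are, and if $\theta=|A|$ the regularity of $\theta$ gives $|\bigcup\mathcal{F}|<\theta=|A|$. Hence no such $\mathcal{F}$ can cover $A$, giving $\cov([A]^{<\theta})\geq|A|$; the reverse inequality is witnessed by the family of singletons of $A$. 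Thus $\bb(C_{[A]^{<\theta}})=\theta$ and $\dd(C_{[A]^{<\theta}})=|A|$.

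The remainder is immediate from Fact \ref{Tukey order and b and d}(2). For \emph{(outside)}, the assumption $\R\lq C_{[A]^{<\theta}}$ gives $\bb(C_{[A]^{<\theta}})\leq\bb(\R)$ and $\dd(\R)\leq\dd(C_{[A]^{<\theta}})$, which by the previous paragraph reads as $\theta\leq\bb(\R)$ and $\dd(\R)\leq|A|$. For \emph{(inside)}, the assumption $C_{[A]^{<\theta}}\lq\R$ gives $\bb(\R)\leq\bb(C_{[A]^{<\theta}})=\theta$ and $|A|=\dd(C_{[A]^{<\theta}})\leq\dd(\R)$. There is no substantive obstacle in this argument; the only point that requires care is the boundary case $|A|=\theta$ in computing $\cov([A]^{<\theta})$, which is precisely where the regularity of $\theta$ is needed.
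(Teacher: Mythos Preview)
Your proof is correct and follows exactly the route the paper intends: the corollary is stated without proof immediately after Fact~\ref{Tukey order and b and d}, so the implicit argument is precisely the one you give---compute $\bb(C_{[A]^{<\theta}})=\non([A]^{<\theta})=\theta$ and $\dd(C_{[A]^{<\theta}})=\cov([A]^{<\theta})=|A|$, then apply Fact~\ref{Tukey order and b and d}(2). Your handling of the boundary case $|A|=\theta$ via regularity is the right detail to single out.
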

		
		Both ``$\R\lq C_{[A]^{<\theta}}$'' and ``$C_{[A]^{<\theta}}\lq \R$'' have the following characterizations:
		\begin{fac}(\cite[Lemma 1.16.]{forcing_constellations})
			\label{fac_Tukey_order_equivalence_condition}
			Assume $|X|\geq\theta$ where $\R=\langle X,Y,\sqsubset\rangle$.
			\begin{enumerate}
				\item $\R\lq C_{[X]^{<\theta}}$ iff $\bb(\R)\geq\theta$. 
				\item \label{item_small_equiv}
				$C_{[A]^{<\theta}}\lq\R$ iff there exists $\langle x_a:a\in A\rangle$ such that every $y\in Y$ meets only $<\theta$-many $x_a$.
			\end{enumerate}
		\end{fac}
		
		To separate the right side by using submodels after having separated the left side,
		``$\R\cong_T C_{[A]^{<\theta}}$'' does not work, but ``$\R\cong_T[A]^{<\theta}$'' does.
		The following fact gives a sufficient condition which implies $C_{[A]^{<\theta}}\cong_T [A]^{<\theta}$:
		\begin{fac}(\cite[Lemma 1.15.]{forcing_constellations})
			\label{fac_suff_eq_CI_and_I}
			If $|A|^{<\theta}=|A|$,
			then $C_{[A]^{<\theta}}\cong_T [A]^{<\theta}$.
		\end{fac}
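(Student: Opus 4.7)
The plan is to establish the two Tukey reductions separately. For the trivial direction $C_{[A]^{<\theta}}\preceq_T [A]^{<\theta}$, I would take $\Phi_-\colon A\to [A]^{<\theta}$, $a\mapsto \{a\}$, and $\Phi_+$ the identity on $[A]^{<\theta}$; the required implication ``$\{a\}\subseteq B\Rightarrow a\in B$'' is immediate, and neither the regularity of $\theta$ nor the cardinal arithmetic hypothesis is used.

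For the nontrivial direction $[A]^{<\theta}\preceq_T C_{[A]^{<\theta}}$, I would exploit the assumption $|A|^{<\theta}=|A|$ to fix a bijection $A\to [A]^{<\theta}$, $a\mapsto B_a$, and use it to encode each $<\theta$-sized subset of $A$ by a single element of $A$. Concretely, define $\Phi_-\colon [A]^{<\theta}\to A$ by sending each $B$ to the unique $a_B\in A$ with $B_{a_B}=B$, and define $\Phi_+\colon [A]^{<\theta}\to [A]^{<\theta}$ by $\Phi_+(S)\coloneq \bigcup_{a\in S} B_a$. The Tukey condition reads: if $\Phi_-(B)=a_B\in S$, then $B=B_{a_B}\subseteq \Phi_+(S)$, which is immediate from the construction.

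The only point requiring care, and the only place the regularity of $\theta$ enters, is verifying that $\Phi_+$ actually takes values in $[A]^{<\theta}$: given $S\in[A]^{<\theta}$, we have $|S|<\theta$ and $|B_a|<\theta$ for each $a\in S$, and regularity of $\theta$ yields $\left|\bigcup_{a\in S}B_a\right|<\theta$. Without regularity the union of $<\theta$ sets each of size $<\theta$ could reach size $\theta$, so this is the essential (and only nontrivial) use of the hypothesis. I do not anticipate any genuine obstacle beyond this observation; the statement is really a bookkeeping consequence of the bijection $A\leftrightarrow [A]^{<\theta}$ combined with closure of $[A]^{<\theta}$ under unions of length $<\theta$.
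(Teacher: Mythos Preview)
Your proof is correct: both Tukey connections are set up properly, and the only nontrivial point---that $\bigcup_{a\in S}B_a$ has size $<\theta$ when $|S|<\theta$---is exactly where regularity is needed, as you note. The paper does not actually prove this statement; it merely records it as a fact with a citation to \cite[Lemma 1.15.]{forcing_constellations}, and your argument is the standard one.
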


		\begin{fac}(\cite[Lemma 2.11.]{forcing_constellations})
			\label{fac_cap_V}
			Every ccc poset forces $[A]^{<\theta}\cong_T[A]^{<\theta}\cap V$ and $C_{[A]^{<\theta}}\cong_T C_{[A]^{<\theta}}\cap V$.
			Moreover, $\mathfrak{x}([A]^{<\theta})=\mathfrak{x}^V([A]^{<\theta})$ where $\mathfrak{x}$ represents ``$\add$'', ``$\cov$'', ``$\non$'' or ``$\cof$''.
		\end{fac}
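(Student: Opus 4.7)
The plan is to derive the entire statement from a single covering observation: in $V[G]$, every $B\in[A]^{<\theta}$ is contained in some $B^{*}\in[A]^{<\theta}\cap V$.

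First I would establish the covering. Fix a $\p$-name $\dot B$ and a condition $p$ with $p\Vdash\dot B\in[A]^{<\theta}$. By $\theta$-cc applied to an antichain below $p$ deciding $|\dot B|$, the possible values of $|\dot B|$ below $p$ form a set of size $<\theta$; by regularity of $\theta$ in $V$, they are bounded by some $\kappa^{*}<\theta$. Writing $\dot B=\{\dot a_{\alpha}:\alpha<\kappa^{*}\}$ via a surjection of names, for each $\alpha$ the conditions below $p$ forcing $\dot a_{\alpha}$ to distinct elements of $A$ are pairwise incompatible, so $X_{\alpha}:=\{a\in A:\exists q\leq p,\ q\Vdash\dot a_{\alpha}=a\}$ has size $<\theta$ by $\theta$-cc. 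Then $B^{*}:=\bigcup_{\alpha<\kappa^{*}}X_{\alpha}\in V$ has size $<\theta$ by the regularity of $\theta$, and $p\Vdash\dot B\subseteq B^{*}$; interpreting in $V[G]$ gives the desired covering.

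The covering immediately yields both Tukey equivalences in $V[G]$. For $[A]^{<\theta}\cong_T[A]^{<\theta}\cap V$, the reduction $[A]^{<\theta}\cap V\preceq_T[A]^{<\theta}$ pairs $\Phi_-$ equal to the inclusion with $\Phi_+(C):=C^{*}$ a cover, while $[A]^{<\theta}\preceq_T[A]^{<\theta}\cap V$ pairs $\Phi_-(B):=B^{*}$ with $\Phi_+$ the inclusion; in both cases the Tukey condition reduces to $B\subseteq B^{*}$. For $C_{[A]^{<\theta}}\cong_T C_{[A]^{<\theta}}\cap V$, the same scheme works with $\Phi_-$ the identity map on $A$ and $\Phi_+$ chosen analogously in each direction.

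For the ``moreover'' clause, Fact \ref{Tukey order and b and d} gives $\mathfrak{x}^{V[G]}([A]^{<\theta})=\mathfrak{x}^{V[G]}([A]^{<\theta}\cap V)$ for $\mathfrak{x}\in\{\add,\cof,\non,\cov\}$. Since $\theta$-cc preserves $\theta$ and all cardinals above, the set $[A]^{<\theta}\cap V$ in $V[G]$ coincides with $([A]^{<\theta})^V$ and the relation is absolute; it thus suffices to note that the specific values $\add([A]^{<\theta})=\non([A]^{<\theta})=\theta$, $\cov([A]^{<\theta})=|A|$, and $\cof([A]^{<\theta})=|A|^{<\theta}$ are preserved by $\theta$-cc (the last using the covering again to bound $|([A]^{<\theta})^{V[G]}|$ by the $V$-value). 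The main obstacle is the covering step itself: combining $\theta$-cc with regularity of $\theta$ in $V$ to produce the single $V$-cardinal $\kappa^{*}<\theta$ bounding $|\dot B|$ and then the $V$-set $B^{*}$ of size $<\theta$; after that the Tukey equivalences and invariant identities follow essentially by inspection.
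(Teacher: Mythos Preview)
The paper does not supply its own proof of this Fact; it is quoted verbatim from \cite[Lemma 2.11]{forcing_constellations}, so there is nothing to compare your approach against. Your covering argument and the two Tukey equivalences are correct and are exactly the standard route.

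There is, however, a genuine gap in your treatment of $\cof$ in the ``moreover'' clause. The formula $\cof([A]^{<\theta})=|A|^{<\theta}$ is false in general: for $|A|=\aleph_1$ and $\theta=\aleph_1$ one has $\cof([\aleph_1]^{<\aleph_1})=\aleph_1$ (the initial segments $\{\alpha:\alpha<\omega_1\}$ are cofinal), while $|A|^{<\theta}=\aleph_1^{\aleph_0}=2^{\aleph_0}$ can be arbitrarily large. Worse, $|A|^{<\theta}$ is \emph{not} preserved by $\theta$-cc forcing (ccc forcing changes $2^{\aleph_0}$), and the covering map $B\mapsto B^{*}$ is not injective, so it does not bound $|([A]^{<\theta})^{V[G]}|$ by the $V$-value.

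The fix is to use the covering more directly. You already have $\cof^{V[G]}([A]^{<\theta})=\cof^{V[G]}\bigl(([A]^{<\theta})^V,\subseteq\bigr)$ from the Tukey equivalence. A $V$-cofinal family of size $\cof^V$ remains cofinal in $V[G]$ with the same cardinality (cardinals $\geq\theta$ are preserved), so $\leq$ holds. For $\geq$, take a $\p$-name $\dot F'=\{\dot B_i:i<\kappa\}$ for a cofinal subfamily of $([A]^{<\theta})^V$; in $V$, choose covers $B_i^{*}\supseteq\dot B_i$ via your covering step applied to each name, and set $F''=\{B_i^{*}:i<\kappa\}\in V$. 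For any $C\in([A]^{<\theta})^V$ the statement $\exists i<\kappa\ C\subseteq B_i^{*}$ is a $\Sigma_0$ statement about ground-model objects which is forced (since $C\subseteq\dot B_i\subseteq B_i^{*}$ for some $i$ in the extension), hence true in $V$; thus $F''$ is cofinal in $V$ and $\cof^V\leq\kappa$. Your arguments for $\add,\non,\cov$ (values $\theta,\theta,|A|$, all preserved) are fine.
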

		

		When performing a forcing iteration, the ``outside'' direction is easily satisfied by bookkeeping, while the other one, ``inside'' direction needs more discussion and actually it is usually the main work of separating cardinal invariants. 
		
		In the context of separating cardinal invariants of the continuum by finite support iteration (fsi) of ccc forcings,
		the notions of ``Polish relational system'' and ``good'' (introduced in \cite{JS90} and \cite{Bre91}) work well.

		\begin{dfn}
			$\R=\langle X,Y,\sqsubset\rangle$ is a Polish relational system (Prs) if:
			\begin{enumerate}
				\item $X$ is a perfect Polish space.
				\item  $Y$ is analytic in a Polish space $Z$.
				\item $\sqsubset =\bigcup_{n<\omega}\sqsubset_n$ where $\langle \sqsubset_n:n<\omega\rangle$ is an ($\subseteq$-)increasing sequence of closed subsets of $X\times Z$ such that for any $n<\omega$ and any $y\in Y$, $\{x\in X:x\sqsubset_n y\}$ is closed nowhere dense.
			\end{enumerate}
			When dealing with a Prs, we interpret it depending on the model we are working in.
		\end{dfn}

		In the rest of this section, $\R=\langle X,Y,\sqsubset\rangle$ denotes a Prs. 
		\begin{dfn}
			A poset $\p$ is $\theta$-$\R$-good if for any $\p$-name $\dot{y}$ for a member of $Y$, there is a non-empty set $Y_0\subseteq Y$ of size $<\theta$ such that for any $x\in X$, if $x$ is not met by any $y\in Y_0$, then $\p$ forces $x$ is not met by $\dot{y}$.
			If $\theta=\aleph_1$, we say ``$\R$-good'' instead of ``$\aleph_1$-$\R$-good''.
		\end{dfn}
		
		The following two facts show that goodness works well for the ``inside'' direction of fsi of ccc forcings:
		\begin{fac}(\cite{JS90}, \cite[Corollary 4.10.]{BCMseparating})
			Any fsi of ccc $\theta$-$\R$-good posets is again $\theta$-$\R$-good.
		\end{fac}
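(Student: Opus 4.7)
I would argue by transfinite induction on the iteration length $\alpha$, with the base case $\alpha=0$ trivial. For the successor step $\alpha=\beta+1$, write $\p_{\beta+1}=\p_\beta * \qd_\beta$ and view a $\p_{\beta+1}$-name $\dot{y}$ for a member of $Y$ as a $\p_\beta$-name for a $\qd_\beta$-name. In $V^{\p_\beta}$, goodness of $\qd_\beta$ produces a witness $\dot{Y}_1 \subseteq Y$ of size $<\theta$. Because $\p_\beta$ is $\theta$-cc and $\theta$ is regular, one may cover $\dot{Y}_1$ in $V$ by a family $\mathcal{Z}$ of $\p_\beta$-names for members of $Y$ with $|\mathcal{Z}|<\theta$: take a maximal antichain deciding $|\dot{Y}_1|$, which by $\theta$-cc has size and possible values both $<\theta$, and list the names for the enumerated elements below each branch. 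Applying the inductive hypothesis to $\p_\beta$ for each $\dot{z}\in\mathcal{Z}$ yields a $<\theta$-sized witness $Y_0^{\dot{z}}$, and $Y_0:=\bigcup_{\dot{z}\in\mathcal{Z}}Y_0^{\dot{z}}$ remains of size $<\theta$ by regularity of $\theta$. A direct unwinding then confirms that $Y_0$ witnesses goodness for $\dot{y}$: if $x\in X$ is unresponded by every $y\in Y_0$, then $\p_\beta$ forces (via goodness for each $\dot{z}$) that $x$ is unresponded by any element of $\dot{Y}_1$, whence $\qd_\beta$ forces $x$ unresponded by the original $\qd_\beta$-name, so $\p_\beta * \qd_\beta\Vdash x\not\sqsubset\dot{y}$.

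For a limit $\alpha$ with $\cf(\alpha)\geq\theta$, the name $\dot{y}$, naming an element of the ambient Polish space, is determined by countably many maximal antichains deciding its basic approximations, each of size $<\theta$ by the $\theta$-cc of $\p_\alpha$. Since every condition has finite support, the union of these supports has size $<\theta$, and $\cf(\alpha)\geq\theta>\aleph_0$ forces this union to be bounded below $\alpha$. Hence $\dot{y}$ is in fact a $\p_\gamma$-name for some $\gamma<\alpha$, and the induction hypothesis applies.

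The genuinely delicate case, which I expect to be the main obstacle, is a limit $\alpha$ with $\cf(\alpha)<\theta$ (in particular $\cf(\alpha)=\omega$): here $\dot{y}$ may involve coordinates cofinal in $\alpha$ and cannot be made to factor through any $\p_\gamma$ with $\gamma<\alpha$. One fixes a cofinal increasing sequence $\langle\alpha_i:i<\cf(\alpha)\rangle$, applies the inductive hypothesis stage-wise at each $\p_{\alpha_i}$ to suitable ``approximations'' of $\dot{y}$ extracted via $\theta$-cc, and sets $Y_0:=\bigcup_{i<\cf(\alpha)}Y_0^i$, which has size $<\theta$ by $\cf(\alpha)<\theta$ together with regularity of $\theta$. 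Verifying that this $Y_0$ witnesses goodness for $\dot{y}$ crucially exploits the Polish structure $\sqsubset=\bigcup_n\sqsubset_n$ with each $\sqsubset_n$ closed: the failure $x\not\sqsubset\dot{y}$ is the conjunction of the closed conditions $x\not\sqsubset_n\dot{y}$ for all $n$, each of which is decided by some bounded portion of the iteration, so that stage-wise non-response can be glued to give non-response at stage $\alpha$. Making this gluing rigorous, and in particular ensuring no single coordinate ``absorbs'' the failure globally, is the step I would expect to consume most of the technical effort.
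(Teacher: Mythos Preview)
The paper does not prove this statement; it is recorded as a Fact with citations to \cite{JS90} and \cite[Corollary~4.10]{BCMseparating}, and no argument is given in the paper itself. Your inductive plan is the standard route to this result, and the two cases you actually work out are correct: the successor step via the two-step composition (using $\theta$-cc of $\p_\beta$ to pull the $\qd_\beta$-witness $\dot Y_1$ back to a ground-model family of $<\theta$ many $\p_\beta$-names, then applying the inductive hypothesis to each) and the limit case $\cf(\alpha)\ge\theta$ (bounding the support of a nice name for $\dot y$ below $\alpha$) both go through exactly as you describe.

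One small slip in your sketch of the delicate case $\cf(\alpha)<\theta$: since each $\sqsubset_n$ is \emph{closed} in $X\times Z$, the individual conditions ``$x\not\sqsubset_n\dot y$'' are \emph{open} in $\dot y$, not closed as you wrote. This is in fact what makes your ``decided by some bounded portion of the iteration'' heuristic work, since open (i.e., $\Sigma^0_1$) conditions on $\dot y$ are witnessed by finitely many bits of the nice name and hence by conditions of bounded support. You are right that this case carries the real technical weight and that the Polish hypotheses on $\R$ are what allow the stage-wise witnesses to be glued; fleshing this out is essentially the content of the cited references, and your outline points in the right direction even if it does not yet constitute a proof.
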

		
		\begin{fac}(\cite{FM21},\cite[Theorem 4.11.]{BCMseparating})
			Let $\p$ be a fsi of non-trivial ccc $\theta$-$\R$-good posets of length $\gamma\geq\theta$. 
			Then, $\p$ forces $C_{[\gamma]^{<\theta}}\lq\R$.
		\end{fac}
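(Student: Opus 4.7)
By Fact~\ref{fac_Tukey_order_equivalence_condition}(\ref{item_small_equiv}), the task reduces to producing in $V^\p$ a family $\langle x_\alpha:\alpha<\gamma\rangle$ of elements of $X$ such that every $y\in Y$ responds to fewer than $\theta$-many $x_\alpha$'s. The plan is to take the $x_\alpha$'s to be ``Cohen-like'' reals extracted stage-wise from the iteration and to use the preservation of $\theta$-$\R$-goodness (the immediately preceding Fact) to bound the bad set of stages for each response. Concretely, since each iterand $\dot{\q}_\alpha$ is non-trivial and $\theta$-cc, the standard fact that a FSI of non-trivial ccc posets adds a Cohen real at every limit stage of cofinality $\omega$ supplies, after re-indexing (using $\gamma\geq\theta\geq\aleph_1$, so that $\gamma$-many such limits exist below $\gamma$), a $\p_{\alpha+1}$-name $\dot c_\alpha$ for a Cohen real in $X$ at every stage $\alpha<\gamma$. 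Because $\R$ is a Prs, for every $y\in Y$ the set $\{x:x\sqsubset y\}=\bigcup_n\{x:x\sqsubset_n y\}$ is a countable union of closed nowhere dense sets, hence meager, so $\dot c_\alpha$ avoids every such meager set coded in $V^{\p_\alpha}$, giving $\dot c_\alpha\not\sqsubset y$ for every $y\in Y\cap V^{\p_\alpha}$. I set $\dot x_\alpha\coloneq\dot c_\alpha$.

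Next, given any $\p$-name $\dot y$ for an element of $Y$, the preservation theorem tells me $\p$ is itself $\theta$-$\R$-good, yielding a ground-model set $Y_0\subseteq Y$ with $|Y_0|<\theta$ such that, for every $x\in X^V$ unresponded by all of $Y_0$, $\p\Vdash x\not\sqsubset\dot y$. Combining this with $\theta$-cc (which captures $\dot y$ inside a sub-iteration $\p\on S$ with $|S|<\theta$, in fact countable by ccc), the Cohen-genericity of $\dot c_\alpha$ for $\alpha\notin S$ over the sub-extension $V^{\p\on S}$, via the FSI factorization $\p\cong(\p\on S)*\dot\p^{\gamma\setminus S}$, forces $\dot c_\alpha\not\sqsubset\dot y$: indeed, $y^G\in V^{\p\on S}$, the set $\{x:x\sqsubset y^G\}$ is meager in $V^{\p\on S}$, and $\dot c_\alpha$ avoids it. Hence $\{\alpha<\gamma:\dot c_\alpha\sqsubset\dot y\}\subseteq S$ has size $<\theta$, giving the desired Tukey connection $C_{[\gamma]^{<\theta}}\preceq_T\R$.

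The hardest part I expect is making precise the factorization step: one must verify that each $\dot c_\alpha$ for $\alpha\notin S$ remains Cohen-generic over $V^{\p\on S}$ (using the product-like structure of FSIs restricted to sub-iterations so that the ``$\alpha$-th Cohen real'' in the quotient is still Cohen from the perspective of $V^{\p\on S}$), and then use the goodness witness $Y_0$ as the bridge between ground-model meagerness and the meagerness of $\{x:x\sqsubset y^G\}$ in the extension. This is where the Prs structure, the preservation of $\theta$-$\R$-goodness, and $\theta$-cc must all be combined carefully; the detailed bookkeeping is carried out in \cite{FM21,BCMseparating}.
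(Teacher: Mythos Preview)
The paper states this Fact with citations to \cite{FM21} and \cite{BCMseparating} and gives no proof of its own, so there is no in-paper argument to compare against. Your high-level plan---use the Cohen reals added along the iteration as the witnessing family, and combine Cohen-genericity with $\theta$-$\R$-goodness of the iteration---is exactly the approach of the cited references. The concrete realization you propose, however, has a genuine gap.

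The factorization $\p\cong(\p\on S)*\dot\p^{\gamma\setminus S}$ for an arbitrary (even countable) subset $S\subseteq\gamma$ simply does not exist for finite-support iterations: each iterand $\dot{\q}_\xi$ is a $\p_\xi$-name that may depend on \emph{all} earlier coordinates, so ``restricting to $S$'' neither yields a complete subposet nor admits a quotient. Finite-support iterations factor only along \emph{initial segments}, $\p_\gamma\cong\p_\xi*(\p_\gamma/\p_\xi)$. Hence your step ``$\dot c_\alpha$ is Cohen over $V^{\p\on S}$ for $\alpha\notin S$'' cannot be made precise, and this is a structural obstruction rather than bookkeeping. (A minor additional slip: the hypothesis is $\theta$-cc, not ccc, so $\dot y$ is captured by $<\theta$ many conditions, not countably many.) The argument in the references instead factors along initial segments: for each Cohen stage $\alpha$, one works in $V^{\p_\alpha}$, where $c_\alpha$ is an honest element of $X$, and uses that the \emph{tail} $\p_\gamma/\p_\alpha$ is again $\theta$-$\R$-good there (this is precisely the content of the preceding Fact). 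Goodness is then applied in $V^{\p_\alpha}$---where $c_\alpha$ is a legitimate value of the quantified $x$---not in the ground model together with a non-existent sub-iteration. Your intuition that $Y_0$ should act as a ``bridge'' points in the right direction, but the bridge has to be built at each intermediate model $V^{\p_\alpha}$, not via a restriction to a support set.
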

		
		An example of a good poset is a small one:
		\begin{fac}
			(\cite[Lemma 4]{Mej13},
			\cite[Theorem 6.4.7]{BJ95})
			Every poset of size $<\theta$ is $\theta$-$\R$-good.
			In particular, Cohen forcing is $\R$-good.
		\end{fac}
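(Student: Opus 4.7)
The plan is to find, given any $\p$-name $\dot{y}$ for an element of $Y$, a set $Y_0\subseteq Y$ of size less than $\theta$ satisfying the goodness clause, by exploiting the decomposition $\sqsubset=\bigcup_n\sqsubset_n$ and the closedness of each $\sqsubset_n$. Contrapositively, goodness requires that whenever some $q\in\p$ and $n<\omega$ satisfy $q\Vdash x\sqsubset_n\dot{y}$, some $y\in Y_0$ responds $x$. So for each $(q,n)\in\p\times\omega$ I would consider
\[
T_{q,n}:=\{x\in X:q\Vdash x\sqsubset_n\dot{y}\}.
\]
This is closed in $X$ (by closedness of $\sqsubset_n$ and absoluteness), and in fact nowhere dense (since in any generic extension with $q\in G$ it is contained in the nowhere dense response region of $\dot{y}^G$ at level $n$, and nowhere density of closed sets is absolute). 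The task then reduces to finding, for each pair $(q,n)$, a countable family $Y_{q,n}\subseteq Y$ whose $\sqsubset$-response regions together cover $T_{q,n}$; setting $Y_0:=\bigcup_{(q,n)}Y_{q,n}$ yields $|Y_0|\leq|\p|\cdot\aleph_0<\theta$ by regularity of $\theta$.

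To construct $Y_{q,n}$ for a fixed pair, I would approximate $\dot{y}$ below $q$ by countably many ground-model elements of $Y$, using the Polish (hence second countable) structure of the space $Z$ in which $Y$ is analytic. Concretely, enumerate a countable base of $Z$ and for each basic open set use density within the small poset $\p$ to find conditions below $q$ deciding whether $\dot{y}$ lies in its closure; the ``limit points'' so determined yield candidate elements $y\in Y$ in the ground model. The closedness of $\sqsubset_n$ together with absoluteness then guarantees that if $x\in T_{q,n}$ then $x\sqsubset_n y$ for some such candidate $y$, so $x$ is $\sqsubset$-responded by an element of $Y_{q,n}$.

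The main obstacle is this approximation step: ensuring that countably many ground-model elements of $Y$ really do suffice to transfer the closed relation $\sqsubset_n$ from the generic realization of $\dot{y}$ back to $V$. For the Cohen forcing corollary this amounts to the classical continuous reading of Cohen names, which is well-established. For a general poset of size $<\theta$, one uses the trivial $\theta$-cc of $\p$ (any antichain has size at most $|\p|<\theta$) together with second countability of $Z$ and closedness of $\sqsubset_n$ to carry out the same approximation, producing at most $|\p|\cdot\aleph_0<\theta$ ground-model elements of $Y$ in total, hence a $Y_0$ of the required size.
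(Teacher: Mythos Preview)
The paper does not prove this Fact; it simply cites \cite{Mej13} and \cite{BJ95}. So there is no in-paper approach to compare against, only the question of whether your sketch is complete.

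Your overall strategy is right: the sets $T_{q,n}=\{x\in X:q\Vdash x\sqsubset_n\dot y\}$ are closed (by closedness of $\sqsubset_n$ and absoluteness of closed relations) and nowhere dense (by comparison with $\{x:x\sqsubset_n\dot y^G\}$ in any extension with $q\in G$), and a set $Y_0$ whose response regions cover every $T_{q,n}$ witnesses goodness.

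The gap is precisely at your ``main obstacle''. Your limit-point construction produces points of $Z$, not of $Y$: since $Y$ is only analytic in $Z$, there is no reason a limit of approximations to $\dot y$ should land in $Y$. Two standard fixes are available, neither of which you stated. First, lift to a witness for analyticity: fix a closed $C\subseteq Z\times\omega^\omega$ with $Y=\mathrm{proj}_Z(C)$, pick a name $\dot w$ with $\Vdash(\dot y,\dot w)\in C$, and run your approximation on the pair $(\dot y,\dot w)$; limits stay in the closed set $C$ and hence project into $Y$. Second, and more directly, for each $q\in\p$ note that
\[
F_q:=\bigl\{z\in Z:\forall n\ \forall x\in T_{q,n}\ (x\sqsubset_n z)\bigr\}
\]
is closed (replace each $T_{q,n}$ by a countable dense subset and use closedness of $\sqsubset_n$), and $\dot y^G\in Y\cap F_q$ in any extension with $q\in G$; hence $Y\cap F_q$ is a nonempty analytic set in $V[G]$, so by $\Sigma^1_1$-absoluteness it is nonempty in $V$, and one may choose $y_q\in Y\cap F_q$. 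Then $Y_0=\{y_q:q\in\p\}$ already works and has size $\le|\p|<\theta$, with no extra factor of $\aleph_0$ needed.
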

		
		To treat goodness, we have to characterize cardinal invariants using a Prs.
		While $\mathbf{D},\mathbf{PR},\mathbf{BPR}$ and $\mathbf{PR}_g$ are canonically Prs's, the cardinal invariants on ideals need other characterizations:
		
		\begin{exa}
			\begin{enumerate}
				\item For $k<\omega$, let $\mathrm{id}^k\in\oo$ denote the function $i\mapsto i^k$ for each $i<\omega$ and let $\mathcal{H}\coloneq\{\mathrm{id}^{k+1}:k<\omega\}$.
				
				Let $\mathcal{S}=\mathcal{S}(\omega,\mathcal{H})$ be the set of all functions $\varphi\colon\omega\to\fin$ such that there is $h\in \mathcal{H}$ 
				with $|\varphi(i)|\leq h(i)$ for all $i<\omega$.
				Let $\mathbf{Lc}^*=\langle\oo,\mathcal{S},\in^*\rangle$ be the Prs where $x\in^*\varphi:\Leftrightarrow x(n)\in\varphi(n)$ for all but finitely many $n<\omega$.
				
				As a consequence of \cite{BartInv},
				$\mathbf{Lc}^*\cong_T\n$ holds, so $\bb(\mathbf{Lc}^*)=\addn$ and $\dd(\mathbf{Lc}^*)=\cofn$.
				
				Any $\mu$-centered poset is $\mu^+$-$\mathbf{Lc}^*$-good (\cite{Bre91,JS90}).
				
				Any Boolean algebra with a strictly positive finitely additive measure is $\mathbf{Lc}^*$-good (\cite{Kam89}). In particular, so is any subalgebra of random forcing.
				
				\item For each $n<\omega$,
				let $\Omega_n\coloneq\{a\in[\sq]^{<\omega}:\mathbf{Lb}_2(\bigcup_{s\in a}[s])\leq2^{-n}\}$ (endowed with the discrete topology) where $\mathbf{Lb}_2$ is the standard Lebesgue measure on $2^\omega$.
				Put $\Omega\coloneq\prod_{n<\omega}\Omega_n$ with the product topology, which is a perfect Polish space.
				For $x\in\Omega$, let $N_x^*\coloneq\bigcap_{n<\omega}\bigcup_{s\in x(n)}[s]$, a Borel null set in $2^\omega$.
				Define the Prs $\mathbf{Cn}\coloneq\langle\Omega,2^\omega,\sqsubset^{\mathbf{Cn}}\rangle$ where $x\sqsubset^\mathbf{Cn}z:\Leftrightarrow z\notin N^*_x$.
				Since $\langle N^*_x:x\in\Omega\rangle$ is cofinal in $\n(2^\omega)$ (the set of all null sets in $2^\omega$), $\mathbf{Cn}\cong_T C_\n^\bot$ holds, so $\bb(\mathbf{Cn})=\covn$ and $\dd(\mathbf{Cn})=\nonn$.
				
				Any $\mu$-centered poset is $\mu^+$-$\mathbf{Cn}$-good (\cite{Bre91}).
				
				\item Let $\Xi\coloneq\{f\in(\sq)^{\sq}:\forall s\in\sq, s\subseteq f(s)\}$ and define the Prs $\mathbf{Mg}\coloneq\langle2^\omega,\Xi,\in^\bullet\rangle$ where $x\in^\bullet f:\Leftrightarrow|\{s\in\sq:x\supseteq f(s)\}|<\omega$. Note that $\mathbf{Mg}\cong_T C_\m$ and hence $\bb(\mathbf{Mg})=\nonm$ and $\dd(\mathbf{Mg})=\covm$.

			\end{enumerate}
		\end{exa}
		Summarizing the properties of the ``inside'' direction and the goodness, 
		we obtain the following corollary, which will be actually applied to the iteration in Section \ref{sec_separation}:
		\begin{cor}
			\label{cor_smallness_for_addn_and_covn_and_nonm}
			Let $\p$ be a fsi of ccc forcings of length $\gamma\geq\theta$.
			
			\begin{enumerate}
				\item Assume that each iterand is either:
				\begin{itemize}
					\item of size $<\theta$,
					\item a subalgebra of random forcing, or
					\item $\sigma$-centered.
				\end{itemize}
				Then, $\p$ forces $C_{[\gamma]^{<\theta}}\lq\mathbf{Lc}^*$,
				in particular, $\addn\leq\theta$.
				\item  Assume that each iterand is either:
				\begin{itemize}
					\item of size $<\theta$, or
					\item $\sigma$-centered.
				\end{itemize}
				Then, $\p$ forces $C_{[\gamma]^{<\theta}}\lq\mathbf{Cn}$,
				in particular, $\covn\leq\theta$.
				\item Assume that each iterand is:
				\begin{itemize}
					\item of size $<\theta$.
				\end{itemize}
				Then, $\p$ forces $C_{[\gamma]^{<\theta}}\lq\mathbf{Mg}$,
				in particular, $\nonm\leq\theta$.
			\end{enumerate}

		\end{cor}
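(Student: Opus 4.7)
The plan is to assemble the machinery recalled just above: verify that each iterand is $\theta$-$\R$-good for the appropriate Polish relational system $\R\in\{\mathbf{Lc}^*,\mathbf{Cn},\mathbf{Mg}\}$, invoke preservation of $\theta$-$\R$-goodness under finite support iteration, and then apply the fact that a fsi of non-trivial $\theta$-cc $\theta$-$\R$-good posets of length $\gamma\geq\theta$ forces $C_{[\gamma]^{<\theta}}\lq\R$. The $\theta$-cc hypothesis is automatic since a fsi of ccc posets is ccc and $\theta$ is uncountable regular.

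A general observation used throughout is that $\aleph_1$-$\R$-goodness implies $\theta$-$\R$-goodness for any uncountable regular $\theta$, since a witnessing set $Y_0$ of size $<\aleph_1$ is automatically of size $<\theta$. With this in hand, the case analysis of iterands is routine. For (1): iterands of size $<\theta$ are $\theta$-$\mathbf{Lc}^*$-good by the small-poset fact; $\sigma$-centered ($=\aleph_0$-centered) iterands are $\aleph_1$-$\mathbf{Lc}^*$-good and hence $\theta$-$\mathbf{Lc}^*$-good; subalgebras of random forcing inherit a strictly positive finitely additive measure and are thus $\mathbf{Lc}^*$-good. Part (2) is the same analysis for $\mathbf{Cn}$, using only the small and $\sigma$-centered cases, since $\sigma$-centered posets are $\aleph_1$-$\mathbf{Cn}$-good as noted above. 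Part (3) requires only the small-poset fact applied to $\mathbf{Mg}$.

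Once $C_{[\gamma]^{<\theta}}\lq\R$ is established in the extension, I extract the cardinal bound by applying $\bb$ via Fact \ref{Tukey order and b and d}: $\bb(\R)\leq\bb(C_{[\gamma]^{<\theta}})=\theta$. The Tukey equivalences $\mathbf{Lc}^*\cong_T\n$, $\mathbf{Cn}\cong_T C_\n^\bot$ and $\mathbf{Mg}\cong_T C_\m$ recorded in the examples translate these into $\addn\leq\theta$, $\covn\leq\theta$ and $\nonm\leq\theta$ respectively. I do not anticipate any real obstacle: the corollary is a direct bookkeeping assembly of the preservation theory of this section, and the only substantive points are the upgrade from $\aleph_1$- to $\theta$-goodness and matching each type of iterand to a goodness fact — both genuinely mechanical.
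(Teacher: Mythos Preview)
Your proposal is correct and is exactly the intended argument: the paper does not give an explicit proof but presents the statement as a direct corollary of the goodness facts and the two preservation theorems listed just before it, and your write-up simply spells out that assembly. The only point worth tightening is that the hypothesis ``$\theta$-cc'' in the cited fact refers to each iterand, not the whole iteration; since every iterand is ccc and $\theta\geq\aleph_1$, this is immediate.
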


		\begin{rem}
			In \cite{GKS}, the ``outside'' direction is treated by introducing ``$\cob$'' (short for cone of bounds). For a directed partially ordered set $(S,\leq_S)$, $\cob(\R,\p,S)$ stands for ``there exists a sequence $\langle\dot{y}_s:s\in S\rangle$ of $\p$-names of responses such that for every $\p$-name $\dot{x}$ of a challenge, there exists $s\in S$ such that for any $t\geq_S s$, $\Vdash_\p\dot{x} \sqsubset \dot{y}_t$''.
			
			If $\theta\leq\lambda$ and $\p$ is ccc, 
			then $\cob(\R,\p,[\lambda]^{<\theta})$ is equivalent to $\Vdash_\p \R\lq[\lambda]^{<\theta}\cap V\cong_T[\lambda]^{<\theta}$ (\cite[Remark 2.13.]{forcing_constellations}).
			Moreover, if $\lambda^{<\theta}=\lambda$, 
			it is also equivalent to $\Vdash_\p \R\lq C_{[\lambda]^{<\theta}\cap V}\cong_TC_{[\lambda]^{<\theta}}$ by Fact \ref{fac_suff_eq_CI_and_I}.
			
		\end{rem}
		
		\begin{rem}
			In \cite{GKS}, the ``inside'' direction is treated by introducing ``$\lcu$'' (short for linearly cofinally unbounded). For a limit ordinal $\gamma$,  $\lcu(\R,\p,\gamma)$ stands for ``there exists a sequence $\langle\dot{x}_\a:\a<\gamma\rangle$ of $\p$-names of challenges such that for every $\p$-name $\dot{y}$ of a response, there exists $\a<\gamma$ such that for any $\b\geq\a$, $\Vdash_\p\lnot(\dot{x}_\b \sqsubset \dot{y})$''(hence $\lcu(\R,\p,\gamma)\Leftrightarrow\cob(\R^\bot,\p,(\gamma,\leq))$).
			If $\gamma=\lambda$ is a regular cardinal of size $\geq\theta$ and $\p$ is ccc, then $\lcu(\R,\p,\gamma)$ is equivalent to $\Vdash_\p  C_{[\lambda]^{<\theta}\cap V}\cong_TC_{[\lambda]^{<\theta}}\lq\R$.
			Moreover, if $\lambda^{<\theta}=\lambda$, 
			it is also equivalent to $\Vdash_\p [\lambda]^{<\theta}\cap V\cong_T [\lambda]^{<\theta}\lq\R$ by Fact \ref{fac_suff_eq_CI_and_I}.
		\end{rem}

		\section{ultrafilter limit and closedness} \label{sec_uflimit}
		\subsection{General Theory}
		We basically follow the presentation of \cite{Mej_Two_FAM} to describe the general theory of (closed-)ultrafilter-limits. Also, the original ideas are already in \cite{GMS16}.
		\begin{dfn}(\cite[Section 5]{Mej19})
			Let $\Gamma$ be a class for subsets of posets,
			i.e., $\Gamma\in\prod_{\p}\mathcal{P}(\mathcal{P}(\p))$, a (class) function. (e.g., $\Gamma=\Lambda({\text{centered}})\coloneq$ ``centered'' is an example of a class for subsets of poset and in this case $\Gamma(\p)$ denotes the set of all centered subsets of $\p$ for each poset $\p$.)
			\begin{itemize}

				\item A poset $\p$ is $\mu$-$\Gamma$-covered if $\p$ is a union of $\leq\mu$-many subsets in $\Gamma(\p)$.
				As usual, when $\mu=\aleph_0$, we use ``$\sigma$-$\Gamma$-covered'' instead of ``$\aleph_0$-$\Gamma$-covered''. Moreover, we often just say ``$\mu$-$\Gamma$'' instead of ``$\mu$-$\Gamma$-covered''.
				\item Abusing notation, we write ``$\Gamma\subseteq\Gamma^\prime$'' if $\Gamma(\p)\subseteq\Gamma^\prime(\p)$ holds for every poset $\p$.
			\end{itemize}
		\end{dfn}
		

		In this paper, an ``ultrafilter'' means a non-principal ultrafilter.
		\begin{dfn}
			\label{dfn_UF_linked}
			
			Let $D$ be an ultrafilter on $\omega$ and $\p$ be a poset.
			\begin{enumerate}
				\item $Q\subseteq \p$ is $D$-lim-linked ($\in\Lambda^\mathrm{lim}_D(\p)$) if there exist a function $\lim^D\colon Q^\omega\to\p$ and a $\p$-name $\dot{D}^\prime$ of an ultrafilter extending $D$ such that for any countable sequence 
				$\bar{q}=\langle q_m:m<\omega\rangle\in Q^\omega$, 
				\begin{equation}
					\textstyle{\lim^D\bar{q}} \Vdash \{m<\omega:q_m \in \dot{G}\}\in \dot{D}^\prime.
				\end{equation}

				Moreover, if $\ran(\lim^D)\subseteq Q$, we say $Q$ is c-$D$-lim-linked (closed-$D$-lim-linked, $\in\Lambda^\mathrm{lim}_{\mathrm{c}D}(\p)$).
				\item $Q$ is (c-)uf-lim-linked (short for (closed-)ultrafilter-limit-linked) if $Q$ is (c-)$D$-lim-linked for every ultrafilter $D$.
				\item $\Lambda^\mathrm{lim}_\mathrm{uf}\coloneq\bigcap_D\Lambda^\mathrm{lim}_D$ and 
				$\Lambda^\mathrm{lim}_\mathrm{cuf}\coloneq\bigcap_D\Lambda^\mathrm{lim}_{\mathrm{c}D}$.

				
			\end{enumerate}

			We often say ``$\p$ has (c-)uf-limits'' instead of ``$\p$ is $\sigma$-(c-)uf-lim-linked''.

		\end{dfn}

		\begin{exa}
			\label{exa_size_linked}
			Singletons are c-uf-lim-linked and hence every poset $\p$ is $|\p|$-c-uf-lim-linked. 
		\end{exa}


		To define ``$\langle(\p_\xi,\qd_\xi):\xi<\gamma\rangle$ is a fsi of $\mu$-$\Gamma$-covered forcings ($\mu^+$-$\Gamma$-iteration, below)'' in a general way, we have the covering of each iterand $\qd_\xi$ witnessed by some complete subposet $\p^-_\xi$ of $\p_\xi$, not necessarily by $\p_\xi$.
		\begin{dfn}
			\label{dfn_Gamma_iteration}
			\begin{itemize}
				\item A $\kappa$-$\Gamma$-iteration is a fsi $\langle(\p_\eta,\qd_\xi):\eta\leq\gamma,\xi<\gamma\rangle $ of ccc forcings, with witnesses $\langle\p_\xi^-:\xi<\gamma\rangle$, $\langle\theta_\xi:\xi<\gamma\rangle$ and $\langle\dot{Q}_{\xi,\zeta}:\zeta<\theta_\xi,\xi<\gamma\rangle$ satisfying for all $\xi<\gamma$:
				\begin{enumerate}
					\item $\p^-_\xi\lessdot\p_\xi$.
					\item $\theta_\xi$ is a cardinal of size $<\kappa$.
					\item \label{item_Q_is_P_minus_name}
					$\qd_\xi$ and $\langle\dot{Q}_{\xi,\zeta}:\zeta<\theta_\xi\rangle$ are $\p^-_\xi$-names and $\p^-_\xi$ forces that 
					$\bigcup_{\zeta<\theta_\xi}\dot{Q}_{\xi,\zeta}=\qd_\xi$ and $\dot{Q}_{\xi,\zeta}\in\Gamma(\qd_\xi)$ for each $\zeta<\theta_\xi$.
				\end{enumerate}
				\item $\xi<\gamma$ is a trivial stage if $\Vdash_{\p^-_\xi}|\dot{Q}_{\xi,\zeta}|=1$ for all $\zeta<\theta_\xi$. $S^-$ is the set of all trivial stages and $S^+\coloneq\gamma\setminus S^-$.
				\item A guardrail for the iteration is a function $h\in\prod_{\xi<\gamma}\theta_\xi$.
				\item $H\subseteq\prod_{\xi<\gamma}\theta_\xi$ is complete if any countable partial function in $\prod_{\xi<\gamma}\theta_\xi$ is extended to some (total) function in $H$.
				\item $\p^h_\eta$ is the set of conditions $p\in\p_\eta$ following $h$, i.e., for each $\xi\in\dom(p)$, $p(\xi)$ is a $\p^-_\xi$-name 
				and $\Vdash_{\p^-_\xi}p(\xi)\in \dot{Q}_{\xi,h(\xi)}$.  	
			\end{itemize}
		\end{dfn}
		
		The notion ``$p$ follows $h$'' only depends on the values of $h$ on $\dom(p)$:
		\begin{fac}
			\label{fac_guarrail_folloing_only_domain}
			Let $p\in\p^h_\eta$ and assume that a guardrail $g$ satisfies $g\on\dom(p)=h\on\dom(p)$.
			Then, $p\in \p^g_\eta$.
		\end{fac}
		
		
		If every finite partial guardrail can be extended to some $h\in H$, in particular if $H$ is complete, then there are densely many conditions which follow some $h\in H$:
		\begin{lem}
			If every finite partial guardrail can be extended to some $h\in H$, then $\bigcup_{h\in H}\p^h_\eta$ is dense in $\p_\eta$ for all $\eta\leq\gamma$.
		\end{lem}
		\begin{proof}
			Induction on $\eta$.
		\end{proof}
		
		The following theorem and corollary give a sufficient cardinal arithmetic to have a complete set of guardrails of small size:
		\begin{thm}(\cite{EK65})
			Let $\theta\leq\mu\leq\chi$ be infinite cardinals with $\chi\leq2^\mu$.
			Then, there is $F\subseteq {^{\chi}\mu}$ of size $\leq\mu^{<\theta}$ such that any partial function $\chi\to\mu$ of size $<\theta$ can be extended 
			to some (total) function in $F$.
		\end{thm}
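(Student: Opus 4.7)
The plan is to exploit the injection $\chi \hookrightarrow \mathcal{P}(\mu)$ supplied by $\chi \leq 2^\mu$ to encode each $\alpha < \chi$ by a distinct subset $A_\alpha \subseteq \mu$, and then build $F$ out of small pieces of ``local'' data indexed by $<\theta$-sized subsets of $\mu$.

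Concretely, fix such an enumeration $\langle A_\alpha : \alpha < \chi\rangle$ of distinct subsets of $\mu$. For each $T \in [\mu]^{<\theta}$ and each partial function $\psi : \mathcal{P}(T) \to \mu$ with $|\dom(\psi)| < \theta$, define a total function $f_{T,\psi} : \chi \to \mu$ by
\[
    f_{T,\psi}(\alpha) := \begin{cases} \psi(A_\alpha \cap T) & \text{if } A_\alpha \cap T \in \dom(\psi), \\ 0 & \text{otherwise,}\end{cases}
\]
and put $F := \{f_{T,\psi} : (T, \psi) \text{ as above}\}$. For the extension property, given a partial $g : \chi \to \mu$ with $\dom(g) = S$ of size $<\theta$, I would separate the family $\{A_\alpha : \alpha \in S\}$ by picking, for each pair $\alpha \neq \beta$ in $S$, an element $\gamma_{\alpha, \beta} \in A_\alpha \triangle A_\beta$, and let $T$ be the collection of all these $\gamma_{\alpha, \beta}$; then $|T| < \theta$ and $\alpha \mapsto A_\alpha \cap T$ is injective on $S$, so the obvious $\psi$ defined on $\{A_\alpha \cap T : \alpha \in S\}$ by $A_\alpha \cap T \mapsto g(\alpha)$ yields $f_{T,\psi} \in F$ extending $g$.

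The main content is the cardinality estimate $|F| \leq \mu^{<\theta}$. There are $\mu^{<\theta}$ choices of $T$, and for each $T$ of size $\lambda < \theta$ the number of admissible $\psi$ is bounded by $\sum_{\kappa < \theta}(2^\lambda)^\kappa \cdot \mu^\kappa$. The exponent $\lambda \cdot \kappa$ stays below $\theta$ since $\theta$ is a cardinal, so $2^{\lambda \kappa} \leq 2^{<\theta} \leq \mu^{<\theta}$ (using $2 \leq \mu$), and $\mu^\kappa \leq \mu^{<\theta}$; this collapses the whole sum to $\mu^{<\theta}$. Hence $|F| \leq \mu^{<\theta} \cdot \mu^{<\theta} = \mu^{<\theta}$. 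The only delicate point is this cardinal arithmetic, which crucially uses the assumption $\theta \leq \mu$; the separation step and the definition of $f_{T,\psi}$ are otherwise routine once the coding via $\mathcal{P}(\mu)$ is in place.
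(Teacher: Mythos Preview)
Your argument is correct and is essentially the classical Engelking--Kar{\l}owicz construction: code $\chi$ by subsets of $\mu$, separate any small family of codes by a set $T\in[\mu]^{<\theta}$, and read off the extending function from the traces $A_\alpha\cap T$. The cardinality bookkeeping (in particular the observation that $\lambda,\kappa<\theta$ forces $\lambda\cdot\kappa<\theta$, whence $2^{\lambda\kappa}\le\mu^{<\theta}$) is handled correctly.

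There is nothing to compare against: the paper does not prove this theorem but simply quotes it from \cite{EK65}, so your write-up supplies more than the paper itself does here.
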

		
		\begin{cor}
			\label{cor_complete}
			Assume $\aleph_1\leq\mu\leq|\gamma|\leq2^\mu$ and $\mu^+=\kappa$. 
			Then, for any $\langle\theta_\xi<\kappa:\xi<\gamma\rangle$, there exists a complete set of guardrails  of size $\leq\mu^{\aleph_0}$ which works for each $\kappa$-$\Gamma$-iteration of length $\gamma$ using $\langle\theta_\xi:\xi<\gamma\rangle$.
		\end{cor}
		
		In this section, let $\Gamma_\mathrm{uf}$ represent $\Lambda^\mathrm{lim}_\mathrm{uf}$ or $\Lambda^\mathrm{lim}_\mathrm{cuf}$.
		\begin{dfn}
			\label{dfn_UF_iteration}
			A $\kappa$-$\Gamma_\mathrm{uf}$-iteration has $\Gamma_\mathrm{uf}$-limits on $H$ if
			\begin{enumerate}
				\item $H\subseteq\prod_{\xi<\gamma}\theta_\xi$ is a set of guardrails.
				\item For $h\in H$, $\langle \dot{D}^h_\xi:\xi\leq\gamma \rangle$ is a sequence such that $\dot{D}^h_\xi$ is a $\p_\xi$-name of a non-principal ultrafilter on $\omega$. 
				\item If $\xi<\eta\leq\gamma$, then $\Vdash_{\p_\eta}\dot{D}^h_\xi\subseteq\dot{D}^h_\eta$.
				
				\item \label{item_D^-}
				For $ \xi\in S^+$, $\Vdash_{\p_\xi} (\dot{D}^h_\xi)^-\in V^{\p^-_\xi}$ where $(\dot{D}^h_\xi)^-\coloneq\dot{D}^h_\xi\cap V^{\p^-_\xi}$.
				\item Whenever $\langle \xi_m:m<\omega \rangle\subseteq \gamma$ and $\bar{q}=\langle \dot{q}_m:m<\omega\rangle$ satisfying 
				
				$\Vdash_{\p^-_{\xi_m}}\dot{q}_m\in\dot{Q}_{\xi_m,h(\xi_m)}$ for each $m<\omega$:
				\begin{enumerate}
					\item If $\langle \xi_m:m<\omega \rangle$ is constant with value $\xi$, then
					\begin{equation}
						\label{eq_constant}
						\Vdash_{\p_\xi}\textstyle{\lim^{(\dot{D}^h_\xi)^-}}\bar{q}\Vdash_{\qd_\xi}\{m<\omega:\dot{q}_m\in \dot{H}_\xi\}
						\in\dot{D}^h_{\xi+1}.
					\end{equation}
					($\dot{H}_\xi$ denotes the canonical name of $\qd_\xi$-generic filter over $V^{\p_\xi}$.)
					\item If $\langle \xi_m:m<\omega \rangle$ is strictly increasing, then
					\begin{equation}
						\label{eq_increasing}
						\Vdash_{\p_\gamma}\{m<\omega:\dot{q}_m\in \dot{G}_\gamma\}\in\dot{D}^h_\gamma.
					\end{equation}
				\end{enumerate}
				
			\end{enumerate}
		\end{dfn}
		Justification for \eqref{eq_constant} is as follows:
		
		We have:
		\begin{itemize}
			\item $(\dot{D}^h_\xi)^-$ is an ultrafilter in $ V^{\p^-_\xi}$ by \eqref{item_D^-}.
			\item $\Vdash_{\p^-_\xi}\dot{Q}_{\xi,h(\xi)}\in\Lambda^\mathrm{lim}_\mathrm{uf}(\qd_\xi)$ by \eqref{item_Q_is_P_minus_name} in Definition \ref{dfn_Gamma_iteration}.
			
			\item $\Vdash_{\p^-_\xi}\dot{q}_m\in\dot{Q}_{\xi,h(\xi)}$ for all $m<\omega$.
		\end{itemize}
		Thus, we can consider `` $\lim^{(\dot{D}^h_\xi)^-}\bar{q}$ '' in $V^{\p^-_\xi}$ and hence in $V^{\p_\xi}$.
		Moreover, abusing notation, we also use ``$\lim^{(\dot{D}^h_\xi)^-}\bar{q}$'' for a trivial stage $\xi\in S^-$ to denote the constant value of $\bar{q}$. Even though $(\dot{D}^h_\xi)^-$ is not defined for $\xi\in S^-$, $\lim^{(\dot{D}^h_\xi)^-}\bar{q}$ works like an ultrafilter-limit since it trivially forces $\{m<\omega:\dot{q}_m\in \dot{H}_\xi\}=\omega$, and particularly \eqref{eq_constant} is satisfied as long as $\dot{D}^h_{\xi+1}$ is an ultrafilter.  
		
		Justification for \eqref{eq_increasing} is that in the standard way we identify $\dot{q}_m$ with a condition $p$ in $\p_\gamma$ defined by $\dom(p)\coloneq\{\xi_m\}$ and $p(\xi_m)\coloneq\dot{q}_m$, so \eqref{eq_increasing} is a valid statement.
		
		
		It seems to be possible to extend the iteration at a successor step by the direct use of the definition of uf-lim-linkedness in Definition \ref{dfn_UF_linked} (actually the purpose of the notion is to realize this successor step),
		but actually such a simple direct use does not work:
		
		Recall that we are in a slightly complicated situation where there are two models, $V^{\p_\gamma}$ and $V^{\p^-_\gamma}$, and two ultrafilters, $\dot{D}^h_\gamma\in V^{\p_\gamma}$ and $(\dot{D}^h_\gamma)^-\in V^{\p^-_\gamma}$.
		Hence, the definition of uf-lim-linkedness in Definition \ref{dfn_UF_linked} only helps to extend $(\dot{D}^h_\gamma)^-$, not $\dot{D}^h_\gamma$, since the statement ``$\dot{Q}_{\gamma,\zeta}\in\Gamma(\qd_\gamma)$ (for each $\zeta<\theta_\gamma$)''
		holds in $ V^{\p^-_\gamma}$, not in $V^{\p_\gamma}$.
		
		Thus, we need the following lemma which helps to amalgamate ultrafilters:

		\begin{lem}(\cite[Lemma 3.20.]{BCM_filter_linkedness})
			\label{lem_UF_upward_directed}
			Let 
			$M\subseteq N$ be transitive models of set theory, $\p\in M$ be a poset, $D_0\in M,D_0^\prime\in N$ be ultrafilters and $\dot{D}_1\in M^\p$ be a name of an ultrafilter.
			If $D_0\subseteq D_0^\prime$ and $\Vdash_{M,\p}D_0\subseteq \dot{D}_1$,
			then there exists $\dot{D}_1^\prime\in N^\p$, a name of an ultrafilter such that $\Vdash_{N,\p}D_0^\prime,\dot{D}_1\subseteq\dot{D}_1^\prime$.
			(Here, we write $\Vdash_{M,\p}\varphi$ for $M\vDash(\Vdash_\p\varphi)$).
		\end{lem}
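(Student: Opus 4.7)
The plan is to work in the generic extension $N[G]$ (where $G$ is $\p$-generic over $N$, hence also over $M$), produce the desired ultrafilter there, and then recover a name in $N^\p$ via the maximal principle. By Zorn's lemma in $N[G]$, it suffices to show that the filter generated by $D_0'\cup\dot{D}_1[G]$ in $N[G]$ is proper; since both $D_0'$ and $\dot{D}_1[G]$ are already closed under finite intersections, this reduces to establishing $A\cap B\neq\emptyset$ whenever $A\in D_0'$ and $B\in\dot{D}_1[G]$.

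Suppose for a contradiction that $A\cap B=\emptyset$ for such a pair. Choose a name $\dot{B}\in M^\p$ for $B$ and a condition $p\in G$ with $p\Vdash_{M,\p}\dot{B}\in\dot{D}_1$. For each $r\leq p$, define inside $M$ the set $T_r:=\{n<\omega:\exists q\leq r,\ q\Vdash_{M,\p} n\in\dot{B}\}$, which lies in $M$. Directly from the definition $r\Vdash_{M,\p}\dot{B}\subseteq T_r$, and hence $r\Vdash_{M,\p}T_r\in\dot{D}_1$. Since $D_0$ is an ultrafilter in $M$, $T_r\in M$, and $D_0\subseteq\dot{D}_1$ is forced, this gives $T_r\in D_0$: otherwise $\omega\setminus T_r\in D_0\subseteq\dot{D}_1$ would combine with $r\Vdash T_r\in\dot{D}_1$ to force $\emptyset\in\dot{D}_1$, contradicting that $\dot{D}_1$ is a proper filter.

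Now the hypothesis $D_0\subseteq D_0'$ enters: $T_r\in D_0'$ and $A\in D_0'$, so $A\cap T_r\neq\emptyset$, and any $n\in A\cap T_r$ furnishes some $q\leq r$ with $q\Vdash_{M,\p}n\in\dot{B}$. Consequently the set $E:=\{q\leq p:\exists n\in A,\ q\Vdash_{M,\p}n\in\dot{B}\}$, which is computed in $N$ (as $A\in N$), is dense below $p$; genericity of $G$ over $N$ combined with $p\in G$ yields $q\in G\cap E$, producing an $n\in A$ forced into $\dot{B}$, whence $n\in A\cap B$, contradiction. The main obstacle is the coordination between the two models: $T_r$ must be defined inside $M$ in order to invoke that $D_0$ is ultra there, while the dense set $E$ must live in $N$ in order to be met by $G$ (which is generic only over $N$, not merely over $M$). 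The hypothesis $D_0\subseteq D_0'$ is precisely the bridge that lets the $M$-witness $T_r$ interact with the $N$-set $A$, without which the two ultrafilters would lack the common core needed to amalgamate.
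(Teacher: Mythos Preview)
Your proof is correct and follows essentially the same line as the paper's. Both arguments reduce to showing that for $A\in D_0'$ and $\dot B$ (forced into $\dot D_1$) the intersection $A\cap\dot B$ is forced nonempty; both define, for a given condition, the set of integers that can be forced into $\dot B$ (your $T_r$, the paper's $B'$), observe it lies in $D_0$ via the ultrafilter dichotomy, pass to $D_0'$ to meet $A$, and extract a witnessing extension. The only difference is packaging: you phrase it via a dense set and a generic over $N$, the paper phrases it directly as ``for arbitrary $p$ find $q\leq p$ forcing a witness'', which is the same thing.
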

		
		\begin{proof}
			It is enough to show that $\Vdash_{N,\p}\text{``}D_0^\prime\cup\dot{D}_1$ has SFIP''.
			(SFIP is short for Strong Finite Intersection Property and means ``every finite subset has infinite intersection''.) 
			We show that for any $A\in D_0^\prime$ and any $\Vdash_{M,\p}\dot{B}\in\dot{D}_1$, $\Vdash_{N,\p}A\cap\dot{B}\neq\emptyset$.
			Let $p\in\p$ be arbitrary and $B^\prime\coloneq\{b<\omega:\exists q\leq p,~q\Vdash_{M,\p}b\in\dot{B}\}$.
			Since $p\Vdash_{M,\p}\dot{B}\subseteq B^\prime$ and $\Vdash_{M,\p}D_0\subseteq \dot{D}_1$, we obtain $B^\prime\in D_0$.
			Hence in $N$, we can find $c\in A\cap B^\prime$.
			Let $q\leq p$ be a witness of $c\in B^\prime$.
			Note that an $N$-generic filter $G$ is trivially $M$-generic as well, so $q\Vdash_{M,\p}c\in\dot{B}$ implies $q\Vdash_{N,\p}c\in\dot{B}$.
			Thus, $q\Vdash_{N,\p}c\in A\cap\dot{B}$ and since $p$ is arbitrary, we have $\Vdash_{N,\p}A\cap\dot{B}\neq\emptyset$.
		\end{proof}

		\begin{lem}
			\label{lem_uf_const_succ}
			Let $\p_{\gamma+1}$ be a $\kappa$-$\Gamma_\mathrm{uf}$-iteration (of length $\gamma+1$) and suppose $\p_\gamma=\p_{\gamma+1}\on\gamma$ has $\Gamma_\mathrm{uf}$-limits on $H$.
			If $\gamma\in S^-$, or if $\gamma\in S^+$ and:
			\begin{equation}
				\label{eq_minus}
				\Vdash_{\p_\gamma} (\dot{D}^h_\gamma)^-\in V^{\p^-_\gamma}\text{ for all }h\in H,
			\end{equation}
			then we can find $\{\dot{D}^h_{\gamma+1}:h\in H\}$ witnessing that $\p_{\gamma+1}$ has $\Gamma_\mathrm{uf}$-limits on $H$.
		\end{lem}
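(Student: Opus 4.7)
The plan is to construct $\dot{D}^h_{\gamma+1}$ for each $h \in H$ in two stages: first produce a $\qd_\gamma$-name for an ultrafilter \emph{over the small model} $V^{\p^-_\gamma}$ using the uf-lim-linkedness of the relevant piece of $\qd_\gamma$, then amalgamate with $\dot{D}^h_\gamma$ in the bigger model $V^{\p_\gamma}$ via Lemma~\ref{lem_UF_upward_directed}. Fix $h \in H$ and work in $V^{\p^-_\gamma}$. By hypothesis~\eqref{eq_minus}, $(\dot{D}^h_\gamma)^-$ is an ultrafilter in this model, and item~(3) of Definition~\ref{dfn_Gamma_iteration} gives $\dot{Q}_{\gamma,h(\gamma)} \in \Gamma_\mathrm{uf}(\qd_\gamma)$. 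Applying Definition~\ref{dfn_UF_linked} to $\dot{Q}_{\gamma,h(\gamma)}$ with the ultrafilter $(\dot{D}^h_\gamma)^-$ would yield a limit map $\lim^{(\dot{D}^h_\gamma)^-} \colon (\dot{Q}_{\gamma,h(\gamma)})^\omega \to \qd_\gamma$ and a $\qd_\gamma$-name $\dot{E}^h$ (over $V^{\p^-_\gamma}$) of an ultrafilter extending $(\dot{D}^h_\gamma)^-$ satisfying
\begin{equation*}
    \textstyle{\lim^{(\dot{D}^h_\gamma)^-}}\bar q \Vdash_{\qd_\gamma} \{m<\omega : \dot q_m \in \dot H_\gamma\} \in \dot{E}^h
\end{equation*}
for every $\bar q = \langle \dot q_m : m<\omega\rangle \in (\dot{Q}_{\gamma,h(\gamma)})^\omega$. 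In the closed case $\Gamma_\mathrm{uf} = \Lambda^\mathrm{lim}_\mathrm{cuf}$, the range of the limit map additionally lies in $\dot{Q}_{\gamma,h(\gamma)}$, as required.

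Next I would invoke Lemma~\ref{lem_UF_upward_directed} with $M = V^{\p^-_\gamma}$, $N = V^{\p_\gamma}$, $\p = \qd_\gamma$, $D_0 = (\dot{D}^h_\gamma)^-$, $D_0' = \dot{D}^h_\gamma$, $\dot D_1 = \dot{E}^h$. The hypothesis $D_0 \subseteq D_0'$ is automatic from the definition of the trace, and $\Vdash_{M,\p} D_0 \subseteq \dot D_1$ holds by construction of $\dot{E}^h$; the lemma then produces a $\p_\gamma * \qd_\gamma = \p_{\gamma+1}$-name $\dot{D}^h_{\gamma+1}$ for an ultrafilter containing both $\dot{D}^h_\gamma$ and $\dot{E}^h$.

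It remains to verify the five clauses of Definition~\ref{dfn_UF_iteration} at stage $\gamma+1$. Clauses~(1)--(3) are immediate: $\dot{D}^h_{\gamma+1}$ is a non-principal ultrafilter name since it extends one, and by transitivity $\dot{D}^h_\xi \subseteq \dot{D}^h_{\gamma+1}$ for all $\xi \leq \gamma$. Clause~(4) at the new stage $\xi = \gamma$ is precisely hypothesis~\eqref{eq_minus}. For clause~(5a) at $\xi < \gamma$ the conclusion is inherited from the induction hypothesis via $\dot{D}^h_{\xi+1} \subseteq \dot{D}^h_{\gamma+1}$, while at $\xi = \gamma$ it is exactly the displayed limit property of $\dot{E}^h$ combined with $\dot{E}^h \subseteq \dot{D}^h_{\gamma+1}$. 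For clause~(5b), any strictly increasing $\omega$-sequence $\langle \xi_m \rangle$ bounded by $\gamma+1$ must satisfy $\xi_m < \gamma$ for all $m$ (otherwise some $\xi_{m_0} = \gamma$ forces $\xi_{m_0+1} > \gamma$), so the induction hypothesis places $\{m : \dot q_m \in \dot G_\gamma\}$ in $\dot{D}^h_\gamma$, which lifts to $\dot{D}^h_{\gamma+1}$ via $\dot G_\gamma \subseteq \dot G_{\gamma+1}$ and upward closure.

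The hard part is the amalgamation step: the witness $\dot{E}^h$ produced by uf-lim-linkedness is a name only over the \emph{small} model $V^{\p^-_\gamma}$, whereas $\dot{D}^h_\gamma$ lives over the larger $V^{\p_\gamma}$; one cannot apply Definition~\ref{dfn_UF_linked} directly in $V^{\p_\gamma}$ since the linkedness of $\dot{Q}_{\gamma,h(\gamma)}$ is only asserted over $V^{\p^-_\gamma}$. Lemma~\ref{lem_UF_upward_directed} is precisely the amalgamation tool that reconciles the two models, and setting up the right data for it (in particular the use of the trace $(\dot{D}^h_\gamma)^-$, whose very existence in $V^{\p^-_\gamma}$ is the content of hypothesis~\eqref{eq_minus}) is the conceptual heart of the proof.
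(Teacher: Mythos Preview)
Your proposal is correct and follows essentially the same route as the paper: obtain the witness ultrafilter name (your $\dot{E}^h$, the paper's $\dot{D}'$) from the uf-lim-linkedness of $\dot{Q}_{\gamma,h(\gamma)}$ in $V^{\p^-_\gamma}$, then amalgamate it with $\dot{D}^h_\gamma$ via Lemma~\ref{lem_UF_upward_directed}. The only cosmetic difference is that the paper first disposes of the trivial-stage case $\gamma\in S^-$ (where any extension of $\dot{D}^h_\gamma$ works since $|\dot{Q}_{\gamma,h(\gamma)}|=1$), whereas you treat all stages uniformly; your more explicit verification of clauses~(5a) and~(5b) is a welcome elaboration of what the paper leaves implicit.
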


		\begin{proof}
			If $\gamma\in S^-$, 
			any $\dot{D}^h_{\gamma+1}$ extending $\dot{D}^h_\gamma$ for $h\in H$ satisfies \eqref{eq_constant} since every ultrafilter contains $\omega$.
			Thus, we may assume $\gamma\in S^+$.
			By Definition \ref{dfn_UF_linked}, for each $h\in H$ we can find a $\p^-_\gamma*\qd_\gamma$-name $\dot{D}^\prime$ of an ultrafilter extending $(\dot{D}^h_\gamma)^-$ such that 
			for any $\bar{q}=\langle \dot{q}_m:m<\omega\rangle$ satisfying $\Vdash_{\p^-_\gamma}\dot{q}_m\in\dot{Q}_{\gamma,h(\gamma)}$ for all $m<\omega$:
			\begin{equation}
				\label{eq_ufconst_succ}
				\Vdash_{\p^-_\gamma}\textstyle{\lim^{(\dot{D}^h_\gamma)^-}}\bar{q}\Vdash_{\dot{Q}_\gamma}\{m<\omega:\dot{q}_m\in \dot{H}_\gamma\}
				\in\dot{D}^\prime.
			\end{equation}
			Since $(\dot{D}^h_\gamma)^-$ is extended to $\dot{D}^h_\gamma$ and $\dot{D}^\prime$, we can find a  $\p_\gamma*\qd_\gamma=\p_{\gamma+1}$-name $\dot{D}^h_{\gamma+1}$ of an ultrafilter extending $\dot{D}^h_\gamma$ and $\dot{D}^\prime$ by Lemma \ref{lem_UF_upward_directed}.
			This $\dot{D}^h_{\gamma+1}$ satisfies \eqref{eq_constant} and we are done.
		\end{proof}
		
		We give a sufficient condition satisfying the assumption \eqref{eq_minus}: 
		
		\begin{lem}
			\label{lem_D_minus_in_suff}
			Let $\p$ be a ccc poset, $\dot{D}$ a $\p$-name of a set of reals, $\Theta$ a sufficiently large regular cardinal and $N\preccurlyeq H_\Theta$ a $\sigma$-closed submodel containing $\dot{D}$, i.e., $N^\omega\cup\{\dot{D}\}\subseteq N$.
			Then, $\p^-\coloneq\p\cap N$ is a complete subposet of $\p$ and $\Vdash_\p\dot{D}\cap V^{\p^-}\in V^{\p^-}$. 
		\end{lem}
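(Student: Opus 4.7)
The plan is to prove the two assertions separately, both relying on the combination of ccc of $\p$, the elementarity $N \prec H_\Theta$, and the $\sigma$-closure $N^\omega \subseteq N$.

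For $\p^- \lessdot \p$, I verify the two standard criteria: (i) compatibility of elements of $\p^-$ in $\p$ already holds in $\p^-$, and (ii) every maximal antichain of $\p^-$ is maximal in $\p$. Item (i) is immediate from elementarity: the formula ``$\exists r \in \p\, (r \leq p \wedge r \leq q)$'' with parameters $p, q \in N \cap \p$ reflects from $H_\Theta$ to $N$, yielding a witness in $\p^-$. For (ii), let $A \subseteq \p^-$ be a maximal antichain. By ccc of $\p$, $A$ is countable, so fix an enumeration $\langle a_n : n < \omega \rangle$ in $V$. Since this enumeration is an element of $N^\omega$, $\sigma$-closure places it in $N$, whence $A \in N$. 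If some $p \in \p$ were incompatible in $\p$ with every element of $A$, elementarity would produce such a $p$ in $N \cap \p = \p^-$, contradicting maximality of $A$ in $\p^-$.

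For the absoluteness statement $\Vdash_\p \dot{D} \cap V^{\p^-} \in V^{\p^-}$, I construct in $V$ a $\p^-$-name $\dot{D}'$ and check that $\Vdash_\p \dot{D}' = \dot{D} \cap V^{\p^-}$. The key observation: for any nice $\p^-$-name $\dot{x}$ for a real that lies in $N$, elementarity applied with the parameter $\dot{D} \in N$, together with ccc, yields a countable maximal antichain $A_{\dot{x}} \in N$ of $\p$ deciding ``$\dot{x} \in \dot{D}$''. Since $A_{\dot{x}}$ is countable in $H_\Theta$, it is countable in $N$ by elementarity, so its elements are enumerated by a function in $N$, giving $A_{\dot{x}} \subseteq N \cap \p = \p^-$. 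Moreover $A_{\dot{x}}$ remains maximal in $\p^-$ by criterion (i) above. Using AC in $V$ to choose one such $A_{\dot{x}}$ uniformly in $\dot{x}$, I set
\[
\dot{D}' \coloneq \bigl\{(\dot{x}, a) : \dot{x} \in N \text{ is a nice $\p^-$-name for a real},\ a \in A_{\dot{x}},\ a \Vdash_\p \dot{x} \in \dot{D}\bigr\}.
\]
For any $\p$-generic $G$ and $G^- = G \cap \p^-$, unraveling gives $\dot{D}'^{G^-} = \dot{D}^G \cap V[G^-]$, because membership of $\dot{x}^{G^-}$ in $\dot{D}^G$ is determined by which element of $A_{\dot{x}}$ meets $G^-$.

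The main obstacle, and the point that makes the construction match the target set, is verifying that \emph{every} real in $V[G^-]$ arises as $\dot{x}^{G^-}$ for some nice $\p^-$-name $\dot{x} \in N$. Here ccc of $\p^-$ (inherited from $\p$) ensures that a nice $\p^-$-name for a real is determined by countably many conditions in $\p^- \subseteq N$, and the $\sigma$-closure $N^\omega \subseteq N$ then absorbs any such countable object into $N$. Keeping the two roles of ccc (bounding antichain sizes) distinct from the two roles of $\sigma$-closure (internalising countable enumerations into $N$) is the main conceptual care needed.
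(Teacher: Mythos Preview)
Your proof is correct and follows essentially the same approach as the paper: both establish $\p^-\lessdot\p$ via ccc $+$ $\sigma$-closure $+$ elementarity, identify nice $\p^-$-names for reals with nice $\p$-names for reals lying in $N$, and then build a $\p^-$-name in $V$ that evaluates to $\dot{D}\cap V^{\p^-}$. The only cosmetic difference is that the paper defines its name $\tau$ directly by $(\sigma,p)\in\tau\Leftrightarrow p\in\p^-$ and $p\Vdash_\p\sigma\in\dot{D}$, avoiding your intermediate step of selecting decision antichains $A_{\dot{x}}$; your version is slightly more elaborate but equivalent.
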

		\begin{proof}
			Since $\p$ is ccc and $N$ is $\sigma$-closed, $N$ contains all maximal antichains in $\p^-$ and hence $\p^-\lessdot\p$ by elementarity.
			Moreover, we may identify a (nice) $\p^-$-name of a real and a (nice) $\p$-name of a real in $N$.
			Define a $\p^-$-name $\tau$ by $(\sigma,p)\in\tau:\Leftrightarrow\sigma$ is a nice $\p^-$-name of a real and $p\in\p^-$ satisfies $p\Vdash_\p\sigma\in \dot{D}$.
			We obtain $\Vdash_\p\tau=\dot{D}\cap V^{\p^-}$ and we are done. 
		\end{proof}
		Thus, if we are under the assumption of Lemma \ref{lem_uf_const_succ} without \eqref{eq_minus}, and additionally if $N\supseteq N^\omega\cup\{\dot{D}^h_\gamma:h\in H\}$ and $\p^-_\gamma=\p_\gamma\cap N$, then \eqref{eq_minus} is satisfied by Lemma \ref{lem_D_minus_in_suff}.

		For the limit step of the construction of the ultrafilters $\dot{D}^h_\gamma$, we use centeredness:
		
		\begin{lem}
			\label{lem_uf_const_all}
			Let $\gamma$ be limit and 
			$\p_\gamma$ be a $\kappa$-$\left(\Lambda(\mathrm{centered})\cap\Gamma_\mathrm{uf}\right) $-iteration.
			If $\langle \dot{D}^h_\xi:\xi<\gamma, h\in H\rangle$ witnesses that for any $\xi<\gamma$, $\p_\xi=\p_\gamma\on\xi$ has $\Gamma_\mathrm{uf}$-limits on $H$,  
			then we can find $\langle\dot{D}^h_\gamma:h\in H\rangle$ such that $\langle \dot{D}^h_\xi:\xi\leq\gamma, h\in H\rangle$ witnesses $\p_\gamma$ has $\Gamma_\mathrm{uf}$-limits on $H$.
		\end{lem}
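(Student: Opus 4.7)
The plan is as follows. For each $h\in H$, working in $V^{\p_\gamma}$, I define $\dot D^h_\gamma$ by extending, via Zorn's lemma, to an ultrafilter the filter base
\[
\mathcal B_h:=\bigcup_{\xi<\gamma}\dot D^h_\xi\;\cup\;\bigl\{B^h_{\bar q}:\bar q\text{ is as in Definition \ref{dfn_UF_iteration}(5b) with strictly increasing stages}\bigr\},
\]
where $B^h_{\bar q}:=\{m<\omega:\dot q_m\in\dot G_\gamma\}$. If this succeeds, items (1)--(3) of Definition \ref{dfn_UF_iteration} are immediate (non-principality is inherited from the chain), item (4) for $\xi<\gamma$ is given by hypothesis, item (5a) for $\xi<\gamma$ is inherited since $\xi+1<\gamma$ when $\gamma$ is limit, and item (5b) for $\gamma$ holds by construction. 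So the entire task reduces to showing that $\p_\gamma$ forces $\mathcal B_h$ to have the strong finite intersection property.

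For SFIP, fix $p\in\p_\gamma$, $N<\omega$, finitely many $\dot A_1,\dots,\dot A_k$ taken from the chain, and admissible strictly increasing sequences $\bar q^{(1)},\dots,\bar q^{(l)}$ with stages $\xi^{(j)}_m$. I must produce $p'\le p$ and $m>N$ with $p'\Vdash m\in\bigcap_i\dot A_i\cap\bigcap_j B^h_{\bar q^{(j)}}$. Choose $\eta<\gamma$ with $\eta>\sup\supp(p)$ and every $\dot A_i\in\dot D^h_\eta$. Call $\bar q^{(j)}$ \emph{low} if $\xi^{(j)}_m<\eta$ for every $m$, and \emph{high} otherwise. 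For a low $\bar q^{(j)}$, $B^h_{\bar q^{(j)}}$ is a $\p_\eta$-name, and by the inductive hypothesis (item (5b) applied to $\p_\eta$) it is forced to lie in $\dot D^h_\eta$. Hence the $\p_\eta$-name
\[
\dot C:=\bigcap_{i}\dot A_i\cap\bigcap_{\text{low }j}B^h_{\bar q^{(j)}}
\]
is forced by $\p_\eta$ to be in $\dot D^h_\eta$, in particular infinite. Pick $M_0$ so that $\xi^{(j)}_m>\eta$ for every high $j$ and every $m\ge M_0$ (possible by the finiteness of $l$ and strict monotonicity), and then find $p^{**}\le p\on\eta$ in $\p_\eta$ and $m\ge\max(N,M_0)$ with $p^{**}\Vdash_{\p_\eta}m\in\dot C$.

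It remains to extend $p^{**}$ up to $\gamma$ so that the new coordinates force $\dot q^{(j)}_m\in\dot G_\gamma$ for every high $j$. For each stage $\xi\ge\eta$ of the form $\xi=\xi^{(j)}_m$ for some high $j$, the finite set $S_\xi:=\{\dot q^{(j)}_m:\xi^{(j)}_m=\xi,\,j\text{ high}\}$ consists of $\p^-_\xi$-names forced to lie in $\dot Q_{\xi,h(\xi)}$. Since $\Vdash_{\p^-_\xi}\dot Q_{\xi,h(\xi)}\in\Lambda(\mathrm{centered})$, there exists a $\p^-_\xi$-name $\dot r_\xi$ of an element of $\qd_\xi$ forced to lie below every member of $S_\xi$. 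Define $p'\in\p_\gamma$ by $p'\on\eta:=p^{**}$, $p'(\xi):=\dot r_\xi$ at the stages just described, and trivial elsewhere. Since $\supp(p)\subseteq\eta$, we have $p'\le p$; moreover $p^{**}$ forces $m\in\bigcap_i\dot A_i$ and $\dot q^{(j)}_m\in\dot G_\gamma$ for low $j$, while the newly added coordinates force $\dot q^{(j)}_m\in\dot G_\gamma$ for high $j$. This completes the SFIP verification.

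The chief obstacle is precisely the last step: several high sequences may contribute conditions $\dot q^{(j)}_m$ at the \emph{same} stage $\xi$, and these finitely many conditions must be amalgamated into a single coordinate of $p'$. This is exactly where the centeredness clause in $\Lambda(\mathrm{centered})\cap\Gamma_\mathrm{uf}$ is indispensable; without centered common lower bounds, the high contributions could be pairwise incompatible and no single $p'$ would work. Everything else — finite support of $p$, the finiteness of the collection of sequences, and the inductive absorption of the low contributions into $\dot D^h_\eta$ — is routine bookkeeping.
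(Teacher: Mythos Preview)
Your proof is correct and follows essentially the same approach as the paper's: both verify that $\bigcup_{\xi<\gamma}\dot D^h_\xi$ together with the sets $\{m:\dot q_m\in\dot G_\gamma\}$ has SFIP, and both use centeredness of $\dot Q_{\xi,h(\xi)}$ to amalgamate the finitely many conditions that land at the same coordinate when building the witnessing extension. The only organizational difference is that the paper first reduces (``by excluding triviality'') to the case $\cf(\gamma)=\omega$ with all sequences cofinal in $\gamma$, whereas your low/high dichotomy handles bounded sequences explicitly via the inductive hypothesis $(5\mathrm{b})$ for $\p_\eta$; this makes your write-up slightly more self-contained but is not a genuinely different argument.
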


		\begin{proof}
			For $\cf(\gamma)>\omega$ there is nothing to do, so we assume $\cf(\gamma)=\omega$.
			Let $h\in H$ be arbitrary and $S$ be the collection of $\bar{q}=\langle \dot{q}_m:m<\omega\rangle$ such that for some increasing $\langle \xi_m<\gamma:m<\omega \rangle$, $\Vdash_{\p^-_{\xi_m}}\dot{q}_m\in\dot{Q}_{\xi_m,h(\xi_m)}$ holds for each $m<\omega$ (Note that $\xi_m\to\gamma$ since $\xi_m$ are increasing and $\cf(\gamma)=\omega$).
			For $\bar{q}\in S$, let $\dot{A}(\bar{q})\coloneq\{m<\omega:\dot{q}_m\in\dot{G}_\gamma\}$.
			We will show:
			\begin{equation}
				\Vdash_{\p_\gamma}\text{``}\bigcup_{\xi<\gamma}\dot{D}^h_\xi\cup\{\dot{A}(\bar{q}):\bar{q}\in S\}\text{ has SFIP}\text{''}.
			\end{equation} 
			If not, there exist $p\in\p_\gamma$, $\xi<\gamma$, $\p_\xi$-name $\dot{A}$ of an element of $\dot{D}^h_\xi$, $\{\bar{q}^i=\langle \dot{q}^i_m:m<\omega\rangle:i<n\}\in[S]^{<\omega}$ and increasing ordinals $\langle \xi_m^i<\gamma:m<\omega \rangle$ for $i<n$ such that $\Vdash_{\p^-_{\xi^i_m}}\dot{q}_m^i\in\dot{Q}_{\xi^i_m,h(\xi^i_m)}$ holds for $m<\omega$ and $i<n$ and the following holds:
			\begin{equation}
				\label{eq_limit_SFIP_contra}
				p\Vdash_{\p_\gamma}\dot{A}\cap \bigcap_{i<n} \dot{A}(\bar{q}^i)=\emptyset.
			\end{equation}
			We may assume that $p\in\p_\xi$.
			Since all $\langle \xi^i_m<\gamma:m<\omega \rangle$ are increasing and converge to $\gamma$, there is $m_0<\omega$ such that $\xi_m^i>\xi$ for any $m>m_0$ and $i<n$.
			By Induction Hypothesis, $p\Vdash_{\p_\xi}\text{``}\dot{D}^h_\xi$ is  an ultrafilter'' and hence we can pick $q\leq_{\p_\xi} p$ and $m>m_0$ 
			such that $q\Vdash_{\p_\xi}m\in\dot{A}$.
			Let us reorder $\{\xi_m^i:i<n\}=\{\xi^0<\cdots<\xi^{l-1}\}$.
			Inducting on $j<l$, we construct $q_j\in\p_{\xi_j}$. 
			Let $q_{-1}\coloneq q$ and $j<l$ and assume we have constructed $q_{j-1}$.
			Let $I_j\coloneq\{i<n:\xi_m^i=\xi^j\}$.
			Since $\p_{\xi^j}$ forces that all $\dot{q}^i_m$ for $i\in I_j$ are in the same centered component $\dot{Q}_{\xi^j,h(\xi^j)}$, we can pick $p_j\leq q_{j-1}$ in $\p_{\xi^j}$ and 
			a $\p_{\xi^j}$-name $\dot{q}_j$ of a condition in $\qd_{\xi^j}$ such that for each $i\in I_j$, 
			$p_j\Vdash_{\p_{\xi^j}}\dot{q}_j\leq\dot{q}^i_m$.
			Let $q_j\coloneq p_j^\frown\dot{q}_j$.
			By construction, $q^\prime\coloneq q_{l-1}$ satisfies $q^\prime\leq q\leq p$ and $q^\prime\on\xi_m^i\Vdash_{\p_{\xi_m}} q^\prime(\xi_m^i)\leq \dot{q}^i_m$ for all $i<n$, so in particular,  $q^\prime\Vdash_{\p_\gamma}m\in\dot{A}\cap \bigcap_{i<n} \dot{A}(\bar{q}^i)$, which contradicts \eqref{eq_limit_SFIP_contra}.
		\end{proof}

		%
		\subsection{Uniform $\Delta$-system}
		From now on, we always assume $\p_\gamma$ is a $\kappa$-$\Gamma_\mathrm{uf}$-iteration with $\Gamma_\mathrm{uf}$-limits on $H$.
		To define ultrafilter-limits for “refined”
		sequences of conditions in $\p_\gamma$, we introduce the notion of \textit{uniform $\Delta$-system}, which is a more refined $\Delta$-system of conditions in $\p^h_\gamma$ (see \cite[Definition 4.3.19.]{Uri}):
		\begin{dfn}
			\label{dfn_uniform_delta_system}
			Let $\delta$ be an ordinal, 
			$h\in H$ and 
			$\bar{p}=\langle p_ m: m<\delta\rangle\in(\p_\gamma^h)^\delta$.
			$\bar{p}$ is an $h$-uniform $\Delta$-system if:
			\begin{enumerate}
				\item $\{\dom(p_ m): m<\delta\}$ forms a $\Delta$-system with some root $\nabla$.
				\item All $|\dom(p_ m)|$ are the same $n^\prime$ and $\dom(p_ m)=\{\a_{n, m}:n<n^\prime\}$ is the increasing enumeration.
				\item There is $r^\prime\subseteq n^\prime$ such that $n\in r^\prime\Leftrightarrow\a_{n, m}\in\nabla$ for $n<n^\prime$.
				\item For $n\in n^\prime\setminus r^\prime$, $\langle\a_{n, m}: m<\delta \rangle$ is (strictly) increasing.
				
			\end{enumerate}
		\end{dfn}
		
		$\Delta$-System Lemma for this uniform $\Delta$-system also holds:
		\begin{lem}(\cite[Theorem 4.3.20]{Uri})
			\label{lem_uniform_Delta_System_Lemma}
			Assume that $\theta>|H|$ is regular and $\{p_m:m<\theta\}\subseteq\bigcup_{h\in H}\p^h_\gamma$.
			Then, there exist $I\in[\theta]^\theta$ and $h\in H$ such that $\{p_m:m\in I\}$ forms an $h$-uniform $\Delta$-system.
		\end{lem}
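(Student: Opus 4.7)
The plan is to obtain the desired $h$ and $I$ by a sequence of pigeonhole and $\Delta$-system thinnings of the index set $\theta$. First, since $|H|<\theta$ and $\theta$ is regular, pigeonhole on the map sending $m$ to a choice of $h\in H$ with $p_m\in\p^h_\gamma$ produces $h\in H$ and $I_0\in[\theta]^\theta$ with $p_m\in\p^h_\gamma$ for all $m\in I_0$. This fixes the guardrail for the remainder of the argument. The standard $\Delta$-System Lemma applied to the finite sets $\{\dom(p_m):m\in I_0\}$ (valid since $\theta$ is regular uncountable) then yields $I_1\in[I_0]^\theta$ for which $\{\dom(p_m):m\in I_1\}$ forms a $\Delta$-system with some root $\nabla$, verifying clause~(1) of Definition~\ref{dfn_uniform_delta_system}.

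Next, pigeonhole on the finite value $|\dom(p_m)|\in\omega$ yields $I_2\in[I_1]^\theta$ on which $|\dom(p_m)|=n'$ is constant; writing the increasing enumeration $\dom(p_m)=\{\a_{n,m}:n<n'\}$ secures clause~(2). For each $m\in I_2$, the set $\{n<n':\a_{n,m}\in\nabla\}$ is a subset of the finite set $n'$, so one more pigeonhole on the $2^{n'}$ possibilities delivers $I_3\in[I_2]^\theta$ and $r'\subseteq n'$ for which this set equals $r'$ for every $m\in I_3$, giving clause~(3).

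It remains to secure clause~(4). For each $n\in n'\setminus r'$ and $m\in I_3$, we have $\a_{n,m}\in\dom(p_m)\setminus\nabla$; the $\Delta$-system property forces the sets $\dom(p_m)\setminus\nabla$ to be pairwise disjoint, so $\langle\a_{n,m}:m\in I_3\rangle$ is a sequence of $\theta$ pairwise distinct ordinals. A standard Erd\H{o}s--Szekeres argument applies: assign to each $m\in I_3$ the supremum of the lengths of finite descending chains $\a_{n,m}>\a_{n,m_1}>\cdots>\a_{n,m_\ell}$ with $m<m_1<\cdots<m_\ell$ in $I_3$. Well-foundedness of the ordinals forces this value to lie in $\omega$, so pigeonholing on $\omega$ selects $\theta$-many indices on which it is constant, and an elementary check shows that $\a_{n,\cdot}$ must be strictly increasing on this subset. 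Iterating this thinning over the finitely many $n\in n'\setminus r'$, and noting that each further thinning preserves strict monotonicity already achieved in the other coordinates, yields the required $I$. The only mildly delicate point is this final monotonicity extraction; everything preceding it is routine bookkeeping.
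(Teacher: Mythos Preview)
Your argument follows the natural route (the paper itself supplies no proof, only a pointer to \cite{Uri}), and the pigeonhole reductions securing clauses (1)--(3) are correct. There is, however, a slip in your treatment of clause~(4). Well-foundedness of the ordinals guarantees that each individual descending chain is finite, but it does \emph{not} force the supremum of their lengths from a given $m$ to be finite: one can arrange countably many later indices so that from $m$ there exist descending chains of every finite length while still no infinite one, and then your rank at $m$ is $\omega$. If the pigeonholed constant value turns out to be $\omega$, the ``elementary check'' breaks down, since the step ``a chain of length $k$ from $m'$ yields one of length $k+1$ from $m$'' no longer produces a strict inequality between the two (infinite) ranks.

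The repair is standard. Colour a pair $\{m,m'\}$ with $m<m'$ by $0$ if $\a_{n,m}<\a_{n,m'}$ and by $1$ otherwise; an infinite $1$-homogeneous set would give an infinite descending sequence of ordinals, which is impossible, so by the Erd\H{o}s--Dushnik--Miller relation $\theta\to(\theta,\omega)^2$ there is a $0$-homogeneous set of size $\theta$. Iterating this over the finitely many $n\in n'\setminus r'$ then finishes the proof exactly as you intended.
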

		
		\begin{proof}
			Almost direct from $\Delta$-System Lemma. (see e.g. \cite[Theorem 4.3.20]{Uri}).
		\end{proof}
		
		\begin{dfn}
			\label{dfn_uflimit_condition_for_iteration}
			Let $\bar{p}=\langle p_m:m<\omega\rangle\in(\p^h_\gamma)^\omega$ be an $h$-uniform (countable) $\Delta$-system with root $\nabla$.
			We define the limit condition $p^\infty=\lim^h\bar{p}\in\p_\gamma$ as follows:
			\begin{enumerate}
				\item $\dom(p^\infty)\coloneq\nabla$.
				\item For $\xi\in\nabla$,  
				$\Vdash_{\p^-_\xi}p^\infty(\xi):=\lim^{(\dot{D}^h_\xi)^-}\langle p_m(\xi):m<\omega\rangle$.
			\end{enumerate}
		\end{dfn}

		The ultrafilter limit condition forces that ultrafilter many conditions are in the generic filter:
		\begin{lem}
			\label{lem_UF-limit_Principle}
			$\lim^h\bar{p}\Vdash_{\p_\gamma}\{m<\omega:p_m\in \dot{G}_\gamma\}\in\dot{D}^h_\gamma$.
		\end{lem}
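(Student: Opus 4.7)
The plan is a direct coordinate-by-coordinate argument that splits the event ``$p_m\in\dot{G}_\gamma$'' into a finite intersection over $\dom(p_m)$ and then applies the two defining clauses \eqref{eq_constant} and \eqref{eq_increasing} of Definition \ref{dfn_UF_iteration} separately to the root $\nabla$ and to the strictly-increasing tails. Since $\p_\gamma$ is a finite-support iteration, each $\dom(p_m)$ is finite, so only finitely many applications are needed and the conclusion follows from finite additivity of the ultrafilter $\dot{D}^h_\gamma$.

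First I would observe that, writing $\alpha_n$ for $\alpha_{n,m}$ when $n\in r'$ (which is independent of $m$ by the uniform $\Delta$-system structure, as $\nabla = \{\alpha_n:n\in r'\}$), we have
\[
\bigl\{m<\omega : p_m\in\dot{G}_\gamma\bigr\} \;=\; \bigcap_{n\in r'}\bigl\{m : p_m(\alpha_n)\in\dot{G}_\gamma\bigr\}\;\cap\;\bigcap_{n\in n'\setminus r'}\bigl\{m : p_m(\alpha_{n,m})\in\dot{G}_\gamma\bigr\}.
\]
For each tail index $n\in n'\setminus r'$, the stages $\langle \alpha_{n,m}:m<\omega\rangle$ are strictly increasing by Definition \ref{dfn_uniform_delta_system}(4), and $p_m(\alpha_{n,m})$ is a $\p^-_{\alpha_{n,m}}$-name of a condition in $\dot{Q}_{\alpha_{n,m},h(\alpha_{n,m})}$ since $p_m\in\p^h_\gamma$. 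Hence \eqref{eq_increasing} applies directly and gives, unconditionally,
\[
\Vdash_{\p_\gamma}\bigl\{m : p_m(\alpha_{n,m})\in\dot{G}_\gamma\bigr\}\in\dot{D}^h_\gamma.
\]

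For each root coordinate $\xi=\alpha_n$ with $n\in r'$, the sequence $\langle p_m(\xi):m<\omega\rangle$ is constantly at stage $\xi$ and lies in the single linked piece $\dot{Q}_{\xi,h(\xi)}$. By Definition \ref{dfn_uflimit_condition_for_iteration}, $(\lim^h\bar{p})(\xi)$ is precisely $\lim^{(\dot{D}^h_\xi)^-}\langle p_m(\xi):m<\omega\rangle$, so below $\lim^h\bar{p}$ this limit condition belongs to $\dot{H}_\xi$. The constant case \eqref{eq_constant} then yields
\[
\lim{}^h\bar{p}\;\Vdash_{\p_\gamma}\;\bigl\{m : p_m(\xi)\in\dot{G}_\gamma\bigr\}\in\dot{D}^h_{\xi+1}\subseteq\dot{D}^h_\gamma.
\]
Intersecting the finitely many ultrafilter sets produced above (one per element of $n'$) within $\dot{D}^h_\gamma$ then gives the lemma.

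The only real obstacle is bookkeeping: namely, identifying the $\p^-_\xi$-name $\lim^{(\dot{D}^h_\xi)^-}\langle p_m(\xi)\rangle$ appearing inside the definition of $\lim^h\bar{p}$ with the object to which \eqref{eq_constant} is to be applied, and carefully reconciling the ultrafilters $(\dot{D}^h_\xi)^-$, $\dot{D}^h_\xi$, $\dot{D}^h_{\xi+1}$, and $\dot{D}^h_\gamma$ via the inclusions guaranteed by Lemmas \ref{lem_uf_const_succ} and \ref{lem_uf_const_all}, so that the local ultrafilter-membership statements can be pushed up to $\dot{D}^h_\gamma$. Once those identifications are made, the argument is pure finite combinatorics inside an ultrafilter.
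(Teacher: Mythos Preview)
Your proposal is correct. The decomposition
\[
\{m : p_m\in\dot G_\gamma\}=\bigcap_{n\in r'}\{m : p_m(\alpha_n)\in\dot H_{\alpha_n}\}\cap\bigcap_{n\in n'\setminus r'}\{m : p_m(\alpha_{n,m})\in\dot G_\gamma\}
\]
is valid for a finite-support iteration, and your separate treatment of the two pieces via \eqref{eq_constant} and \eqref{eq_increasing} goes through exactly as you describe: for $\xi\in\nabla$ the coordinate $(\lim^h\bar p)(\xi)$ is by definition the relevant $\lim^{(\dot D^h_\xi)^-}$, so below $\lim^h\bar p$ it lies in $\dot H_\xi$ and \eqref{eq_constant} fires; for $n\notin r'$ the stages are strictly increasing and \eqref{eq_increasing} fires unconditionally. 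The finite intersection inside $\dot D^h_\gamma$ then finishes.

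The paper organizes the same ingredients differently, by induction on $\gamma$. In the successor step it peels off the last coordinate $\gamma$ (necessarily in $\nabla$) via \eqref{eq_constant} and invokes the induction hypothesis for the rest; in the limit step it restricts to $\xi=\max(\nabla)+1$, applies the induction hypothesis to $\bar p\!\upharpoonright\!\xi$, and handles the remaining strictly-increasing tails with \eqref{eq_increasing}. So the paper's induction effectively unfolds to your coordinate-wise argument, with the root coordinates being handled one at a time through the successor steps. Your direct approach is slightly more economical since it dispenses with the inductive scaffolding and applies \eqref{eq_constant} to all root coordinates at once; the paper's version has the mild advantage of making explicit why the restricted system $\bar p\!\upharpoonright\!\xi$ is again $h$-uniform with the same root, but this is not needed in your formulation.
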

		
		\begin{proof}
			Induct on $\gamma$.
			
			$\textit{Successor step.}$ Let $\bar{p}=\langle(p_m,\dot{q}_m):m<\omega\rangle\in(\p^h_{\gamma+1})^\omega$ be an $h$-uniform $\Delta$-system with root $\nabla$.	
			To avoid triviality, we may assume that $\gamma\in\nabla$.
			Also we may assume that $\Vdash_{\p^-_\gamma}\dot{q}_m\in\dot{Q}_{\gamma,h(\gamma)}$ for each $m<\omega$.
			Let $p^\infty\coloneq\lim^h\langle p_m:m<\omega\rangle\in\p^-_\gamma$ and 
			$\Vdash_{\p^-_\gamma}\dot{q}^\infty\coloneq\lim^{(\dot{D}^h_\gamma)^-}\langle \dot{q}_m:m<\omega\rangle$.
			By Induction Hypothesis, $p^\infty\Vdash_{\p_\gamma}\dot{A}\coloneq\{m<\omega:p_m\in \dot{G}\}\in\dot{D}^h_\gamma$.
			By \eqref{eq_constant}, $\Vdash_{\p_\gamma}\dot{q}^\infty\Vdash_{\dot{Q}_\gamma}\dot{B}\coloneq\{m<\omega:\dot{q}_m\in \dot{H}_\gamma\}\in\dot{D}^h_{\gamma+1}$
			Thus,
			$\lim^h\bar{p}=p^\infty*\dot{q}^\infty\Vdash_{\p_{\gamma+1}}\{m<\omega:(p_m,\dot{q}_m)\in \dot{G}_\gamma*\dot{H}_\gamma\}\supseteq\dot{A}\cap\dot{B}\in \dot{D}^h_{\gamma+1}$.
			
			$\textit{Limit step.}$ Let $\gamma$ be limit and $\bar{p}=\langle p_m:m<\omega\rangle\in(\p^h_\gamma)^\omega$ be an $h$ uniform-$\Delta$-system.
			We use the same parameters as in Definition \ref{dfn_uniform_delta_system}.
			Let $\xi\coloneq\max(\nabla)+1<\gamma$ and $\bar{p}\on\xi\coloneq\langle p_m\on\xi:m<\omega\rangle$.
			Since 
			$\bar{p}\on\xi$ is also an $h$-uniform $\Delta$-system with root $\nabla$,
			$p^\infty\coloneq\lim^h\bar{p}\on\xi=\lim^h\bar{p}$.
			By Induction Hypothesis, $p^\infty\Vdash_{\p_\xi}\dot{A}\coloneq\{m<\omega:p_m\on\xi\in \dot{G}_\xi\}\in\dot{D}^h_\xi$.
			Let $n^{\prime\prime}\coloneq\max{r^\prime}+1$.
			Since $\langle \a_{n,m}:m<\omega \rangle$ is increasing for $n\in\left[n^{\prime\prime},n^\prime\right)$,
			by \eqref{eq_increasing}, 
			$\Vdash_{\p_\gamma}\dot{B}_n\coloneq\{m<\omega:p_m(\a_{n,m})\in \dot{G}_\gamma\}\in\dot{D}^h_\gamma$ for $n\in\left[n^{\prime\prime},n^\prime\right)$.
			Since $\dom(p_m)=\dom(p\on\xi)\cup\{\a_{n,m}:n\in\left[n^{\prime\prime},n^\prime\right)\}$,
			$p^\infty\Vdash_{\p_\gamma}\{m<\omega:p_m\in \dot{G}_\gamma\}=\dot{A}\cap\bigcap_{n\in\left[n^{\prime\prime},n^\prime\right)} \dot{B}_n\in \dot{D}^h_\gamma$.
		\end{proof}
		
		\begin{cor}
			\label{cor_UF-limit_Preservation_Property}
			Let $\bar{p}=\langle p_m :m<\omega\rangle $, $h\in H$ and $p^\infty=\lim^h\bar{p}$ as above
			and let $\varphi$ be a formula of the forcing language without parameter $m$.
			If all $p_m$ force $\varphi$, then $p^\infty$ also forces $\varphi$.
		\end{cor}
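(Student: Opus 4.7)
The plan is to deduce the corollary as a direct consequence of Lemma~\ref{lem_UF-limit_Principle}, via a one-step contradiction argument. Suppose toward contradiction that $p^\infty\not\Vdash_{\p_\gamma}\varphi$, so there exists $q\leq_{\p_\gamma} p^\infty$ with $q\Vdash_{\p_\gamma}\lnot\varphi$.

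First, I would invoke Lemma~\ref{lem_UF-limit_Principle} to obtain
\[
p^\infty\Vdash_{\p_\gamma}\{m<\omega:p_m\in\dot{G}_\gamma\}\in\dot{D}^h_\gamma.
\]
Since $\dot{D}^h_\gamma$ is forced to be a (non-principal) ultrafilter on $\omega$, every element of it is nonempty, so $p^\infty$ forces that there is at least one $m<\omega$ with $p_m\in\dot{G}_\gamma$. Because $q\leq p^\infty$, the same statement is forced by $q$.

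Next, by extending $q$ and choosing a witness, I would find $q'\leq q$ and a concrete $m<\omega$ such that $q'\Vdash_{\p_\gamma}p_m\in\dot{G}_\gamma$; refining once more gives $q'\leq p_m$ in $\p_\gamma$. By the hypothesis $p_m\Vdash_{\p_\gamma}\varphi$, this yields $q'\Vdash_{\p_\gamma}\varphi$, contradicting $q'\leq q\Vdash_{\p_\gamma}\lnot\varphi$. Here it is essential that $\varphi$ has no free parameter $m$, so that the witness chosen above does not affect the statement being forced.

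All the combinatorial work---the $\Delta$-system structure of $\bar{p}$, the several cases (successor, limit, trivial vs.\ non-trivial stages) of the uf-limit construction, and the induction on $\gamma$---is already absorbed into Lemma~\ref{lem_UF-limit_Principle}. Hence I do not anticipate any serious obstacle; the corollary is essentially a packaging of the ``ultrafilter-many conditions are in the generic filter'' principle as a preservation statement for parameter-free formulas of the forcing language.
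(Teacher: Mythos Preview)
Your proposal is correct and takes essentially the same approach as the paper: both arguments reduce immediately to Lemma~\ref{lem_UF-limit_Principle} to get some $p_m$ in the generic filter and then use that $p_m\Vdash\varphi$. The only cosmetic difference is that the paper phrases it directly via an arbitrary generic filter $G\ni p^\infty$ (so $V[G]\vDash\varphi$), whereas you run the equivalent syntactic contradiction argument below a hypothetical $q\leq p^\infty$ forcing $\lnot\varphi$.
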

		\begin{proof}
			Let $G$ be any generic filter containing $p^\infty$.
			By Lemma \ref{lem_UF-limit_Principle}, in particular, there exists $p_m\in G$.
			Since $p_m$ forces $\varphi$, $V[G]\vDash\varphi$ and recall that $G$ is arbitrary containing $p^\infty$.
		\end{proof}
		
		The following lemma is specific for $\Lambda^\mathrm{lim}_\mathrm{cuf}$ and actually this is why we consider the notion of closedness:
		
		\begin{lem}
			\label{lem_cuf_guardrail_closedness}
			Consider the case $\Gamma_\mathrm{uf}=\Lambda^\mathrm{lim}_\mathrm{cuf}$.
			If $\bar{p}\in(\p^h_\gamma)^\omega$, then $\lim^h\bar{p}\in\p^h_\gamma$.
		\end{lem}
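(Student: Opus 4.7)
The plan is to unpack the definitions and observe that the statement is a direct consequence of the closedness clause in $\Lambda^\mathrm{lim}_\mathrm{cuf}$. Writing $\nabla$ for the root of the $\Delta$-system underlying $\bar{p}$, by Definition \ref{dfn_uflimit_condition_for_iteration} we have $\dom(\lim^h\bar{p})=\nabla$ and
\[
\Vdash_{\p^-_\xi}(\lim^h\bar{p})(\xi)=\textstyle{\lim^{(\dot{D}^h_\xi)^-}}\langle p_m(\xi):m<\omega\rangle
\]
for every $\xi\in\nabla$. To conclude $\lim^h\bar{p}\in\p^h_\gamma$ it suffices to check, stage by stage for $\xi\in\nabla$, that $\Vdash_{\p^-_\xi}(\lim^h\bar{p})(\xi)\in\dot{Q}_{\xi,h(\xi)}$.

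Fix $\xi\in\nabla$. If $\xi\in S^-$ then $\dot{Q}_{\xi,h(\xi)}$ is forced to be a singleton, and all $p_m(\xi)$ coincide with this unique element; the limit does too, and there is nothing to prove. So assume $\xi\in S^+$. Each $p_m\in\p^h_\gamma$ satisfies $\Vdash_{\p^-_\xi}p_m(\xi)\in\dot{Q}_{\xi,h(\xi)}$ by the definition of $\p^h_\gamma$. Working inside $V^{\p^-_\xi}$, item (\ref{item_Q_is_P_minus_name}) of Definition \ref{dfn_Gamma_iteration} tells us that $\dot{Q}_{\xi,h(\xi)}\in\Lambda^\mathrm{lim}_\mathrm{cuf}(\qd_\xi)$; since by item (\ref{item_D^-}) of Definition \ref{dfn_UF_iteration} the ultrafilter $(\dot{D}^h_\xi)^-$ lies in $V^{\p^-_\xi}$, the closedness clause in Definition \ref{dfn_UF_linked} applies and gives
\[
\textstyle{\lim^{(\dot{D}^h_\xi)^-}}\langle p_m(\xi):m<\omega\rangle\in\dot{Q}_{\xi,h(\xi)},
\]
as required.

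This is the entire content of the lemma; no quantifier juggling or genericity argument is needed. The only subtle point—and really the reason the notion of \emph{closed} ultrafilter limit was introduced in the first place—is that ordinary $D$-lim-linkedness would only place the limit in $\qd_\xi$, which would destroy the property of following $h$ and hence break the inductive machinery of guardrails. In short, Lemma \ref{lem_cuf_guardrail_closedness} is the formal justification that the $c$-prefix in Definition \ref{dfn_UF_linked} is exactly what is needed for $\p^h_\gamma$ to be closed under ultrafilter limits of uniform $\Delta$-systems.
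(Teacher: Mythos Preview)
Your proposal is correct and follows the same approach as the paper, which simply says the result is direct from the definition of ``closed-uf-lim-linked'' in Definition~\ref{dfn_UF_linked} and Definition~\ref{dfn_uflimit_condition_for_iteration}; you have just unpacked these definitions explicitly. The case split into $S^-$ and $S^+$ is unnecessary (the closedness clause applies uniformly, since even at trivial stages $(\dot{D}^h_\xi)^-$ is an ultrafilter in $V^{\p^-_\xi}$ and $\dot{Q}_{\xi,h(\xi)}\in\Lambda^\mathrm{lim}_\mathrm{cuf}(\qd_\xi)$), but it is harmless.
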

		\begin{proof}
			Direct from the definition of ``closed-uf-lim-linked'' in Definition \ref{dfn_UF_linked} and Definition \ref{dfn_uflimit_condition_for_iteration}.
		\end{proof}

		\subsection{Application to bounding-prediction}
		To control cardinal invariants using a $\kappa$-$\Gamma_\mathrm{uf}$-iteration $\p_\gamma$ with $\Gamma_\mathrm{uf}$-limits, it is useful to iterate Cohen forcings in the first half of the iteration. For this purpose, we assume the following in this subsection:
		\begin{ass}
			\label{ass_first_Cohen}
			\begin{enumerate}
				\item $\kappa<\lambda$ are uncountable regular cardinals and $\gamma=\lambda+\lambda$.
				\item $\p_\gamma$ is a $\kappa$-$\Gamma_\mathrm{uf}$-iteration with $\Gamma_\mathrm{uf}$-limits on $H$ (with the same parameters as in Definition \ref{dfn_Gamma_iteration} and \ref{dfn_UF_iteration}).
				\item $H$ is complete and $|H|<\kappa$.
				\item For $\xi<\lambda$, $\Vdash_{\p^-_\xi}\qd_\xi=\mathbb{C}$, the Cohen forcing. Note that $\mathbb{C}$ is $\kappa$-$\Gamma_\mathrm{uf}$-linked by Example \ref{exa_size_linked}.
			\end{enumerate}
		\end{ass}

		As mentioned above, in \cite{GMS16} they introduced the notion of ultrafilter-limits to keep the bounding number $\bb$ small through the iteration and separated the left side of Cicho\'n's diagram. This is described as follows using the notions we have already defined above:
		
		
		\begin{thm}(\cite[Main Lemma 4.6]{GMS16}, \cite[Lemma 1.31.]{GKS})
			\label{thm_uf_limit_keeps_b_small}
			Consider the case $\Gamma_\mathrm{uf}=\Lambda^\mathrm{lim}_\mathrm{uf}$.
			Then, $\p_\gamma$ forces $C_{[\lambda]^{<\kappa}}\lq \mathbf{D}$, in particular, $\bb\leq\kappa$. 
		\end{thm}
		\begin{rem}
			Note that in Theorem \ref{thm_uf_limit_keeps_b_small} and Main Lemma \ref{lem_Main_Lemma} below, we do not require centeredness. The justification is that while we need centeredness when constructing a $\kappa$-$\Gamma_\mathrm{uf}$-iteration $\p_\gamma$ with $\Gamma_\mathrm{uf}$-limits at limit steps (see Lemma \ref{lem_uf_const_all}), after having constructed the iteration we do not need centeredness anymore in the argument itself of controlling cardinal invariants. The reason why we state the theorem and the main lemma in the current way is related to Question \ref{que_last} in Section \ref{sec_question}, in order to clarify where the problem discussed in the question lies.
		\end{rem}
		
		We shall carry out a similar argument for closed-ultrafilter-limits to keep $\eeb$ small, using the lemmas in the previous subsection, which are specific for this new limit notion.
		First, we introduce some notation on the bounding-prediction.
		\begin{dfn}
			For a predictor $\pi=(A,\langle \pi_k:k\in A \rangle)\in \Pred$ and $f\in\oo$,
			we write $f\bpd_n \pi$ if $f(k)\leq\pi_k(f\on k)$ for all $k\geq n$ in $A$.
			Note that $\bpd=\bigcup_{n<\omega}\bpd_n $ (see Definition and Fact \ref{dfnfac_RS}) and we say $n$ is a starting point of $f\bpd\pi$ if $f\bpd_n \pi$ holds (we do not require the minimality of such $n$).
			
		\end{dfn}

		By applying the general theory of (c-)uf-limit to bounding-prediction, we can exclude a possible prediction point and preserve the information of the initial segment of predicted reals:
		\begin{lem}
			\label{lem_Prediction_Point_Exclusion_Lemma}
			Let $\dot{\pi}=(\dot{A},\langle\dot{\pi}_k:k\in\dot{A}\rangle)$ be a $\p_\gamma$-name of a predictor,
			$n^*\leq j<\omega$,
			$t\in\omega^{j-1}$ and
			$\bar{p}=\langle p_m:m<\omega\rangle$ be an $h$-uniform $\Delta$-system.
			
			Assume that each $p_m$ forces:	  
			\begin{equation}
				\label{eq_PPEL_assump}
				\text{there is }c\in\oo\text{such that }c\on j=t^\frown m\text{ and }c\bpd_{n^*}\dot{\pi}.
			\end{equation}  
			Then, $\lim^h\bar{p}$ forces:
			\begin{gather}
				j-1\notin\dot{A}\text{, and}\label{eq_PPEL_conclu_1}\\
				\text{there is }c\in\oo\text{ such that }c\on (j-1)=t\text{ and }c\bpd_{n^*}\dot{\pi}.\label{eq_PPEL_conclu_2}
			\end{gather}	 
		\end{lem}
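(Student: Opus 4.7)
The plan is to apply Lemma \ref{lem_UF-limit_Principle} to pool together the existence statements forced by the individual $p_m$ into the generic extension, and then exploit that the value $c_m(j-1) = m$ grows without bound across $m$ while any specific prediction $\dot{\pi}_{j-1}(t)$ evaluates to a single natural number in the extension.

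Set $p^\infty := \lim^h \bar{p}$. By Lemma \ref{lem_UF-limit_Principle}, $p^\infty$ forces $S := \{m < \omega : p_m \in \dot{G}_\gamma\} \in \dot{D}^h_\gamma$; since $\dot{D}^h_\gamma$ is a non-principal ultrafilter, $S$ is forced to be infinite. Fix an arbitrary generic filter $G$ containing $p^\infty$ and work in $V[G]$. For each $m \in S$ the condition $p_m$ belongs to $G$, so by the assumption \eqref{eq_PPEL_assump} there exists a witness $c_m \in \oo$ with $c_m \on j = t^\frown m$ and $c_m \bpd_{n^*} \dot{\pi}[G]$.

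Conclusion \eqref{eq_PPEL_conclu_2} is then immediate: pick any $m \in S$ and set $c := c_m$; then $c \on (j-1) = t$, because the first $j-1$ entries of $t^\frown m$ are exactly $t$, and $c \bpd_{n^*} \dot{\pi}[G]$ by construction.

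For \eqref{eq_PPEL_conclu_1}, suppose toward a contradiction that $j-1 \in \dot{A}[G]$, and let $v := \dot{\pi}_{j-1}[G](t) \in \omega$. For each $m \in S$, the definition of $\bpd_{n^*}$ applied at the index $k = j - 1 \in \dot{A}[G]$ (using the numerical hypothesis $j - 1 \geq n^*$) gives $c_m(j-1) \leq \dot{\pi}_{j-1}[G](c_m \on (j-1)) = \dot{\pi}_{j-1}[G](t) = v$. But $c_m(j-1) = m$ by choice of $c_m$, so $m \leq v$ for every $m \in S$, contradicting that $S$ is infinite. Hence $p^\infty \Vdash_{\p_\gamma} j - 1 \notin \dot{A}$. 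There is no serious obstacle; the argument rests entirely on the ultrafilter-limit principle and the unboundedness of the evaluations $c_m(j-1) = m$. Closedness of the linkedness (Lemma \ref{lem_cuf_guardrail_closedness}) is not invoked in isolation here, but is what guarantees $p^\infty \in \p^h_\gamma$, so that this lemma can be iterated within further uniform $\Delta$-system constructions.
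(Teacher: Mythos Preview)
Your argument is correct and follows the same route as the paper: apply Lemma~\ref{lem_UF-limit_Principle} to obtain an infinite set of indices $m$ with $p_m$ in the generic filter, collect the corresponding witnesses $c_m$, and derive a contradiction from $m=c_m(j-1)\leq\dot{\pi}_{j-1}[G](t)$ when $j-1\in\dot{A}[G]$. The only cosmetic difference is that for \eqref{eq_PPEL_conclu_2} the paper invokes Corollary~\ref{cor_UF-limit_Preservation_Property} abstractly, whereas you unroll it by picking a single $c_m$; these amount to the same thing.
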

		\begin{proof}
			Let $G$ be any generic filter containing $p^\infty\coloneq\lim^h\bar{p}$ and work in $V[G]$.
			By Lemma \ref{lem_UF-limit_Principle}, $M\coloneq\{m<\omega:p_m\in G\}\in \dot{D}^h[G]$. 
			Let $c_m\in\oo$ be a witness of \eqref{eq_PPEL_assump} for each $m\in M$.

			If $j-1\in\dot{A}[G]$, then $c_m(j-1)=m\leq\dot{\pi}[G](c_m\on (j-1))=\dot{\pi}[G](t)$ for each $m\in M$, which contradicts that $M\in\dot{D}^h[G]$ is infinite and $\dot{\pi}[G](t)$ is a natural number.
			Thus, $j-1\notin\dot{A}[G]$ and since $G$ is arbitrary containing $p^\infty$, we obtain \eqref{eq_PPEL_conclu_1}. 
			\eqref{eq_PPEL_conclu_2} is direct from Corollary \ref{cor_UF-limit_Preservation_Property}.
		\end{proof}
		\begin{rem}
			This proof highlights the difference between bounding prediction and $g$-prediction for $g\in(\omega\setminus2)^\omega$ 
			since in the case of $g$-prediction, we cannot consider such infinitely many $p_m$. Indeed, the forcing poset $\pr_g$ (defined later) has closed-ultrafilter-limits and increases $\ee_g$ and hence the limits actually do not keep $\ee_g$ small.
			
		\end{rem}
		Lemma \ref{lem_Prediction_Point_Exclusion_Lemma} tells us one limit condition excludes one possible prediction point and preserves the information of shorter initial segments of predicted reals.
		Thus, the strategy to prove Main Lemma \ref{lem_Main_Lemma} below, which states that closed-ultrafilter-limits keep $\eeb$ small, is as follows:
		\begin{enumerate}
			\item Assume the negation of the conclusion towards contradiction.
			\item By a $\Delta$-system argument and arranging Cohen reals (as witnesses of $c$ in Lemma \ref{lem_Prediction_Point_Exclusion_Lemma}), satisfy the assumption of Lemma \ref{lem_Prediction_Point_Exclusion_Lemma}.
			\item Taking limits infinitely many times (guaranteed by Lemma \ref{lem_cuf_guardrail_closedness}) in some suitable order, exclude potential elements of $\dot{A}$ ``downwards'' and ultimately obtain a condition for each $j$ which excludes the points between $n^*$ and $j$.
			\item Finally, take the limit of the ultimate conditions and exclude points ``upwards'' ($j\to\infty$), i.e., almost all points, which contradicts that there are infinitely many prediction points.
		\end{enumerate}

		\begin{mainlem}
			\label{lem_Main_Lemma}
			Consider the case $\Gamma_\mathrm{uf}=\Lambda^\mathrm{lim}_\mathrm{cuf}$.
%
			Then, $\p_\gamma$ forces $C_{[\lambda]^{<\kappa}}\lq \mathbf{BPR}$, in particular, $\ee\leq\eeb\leq\kappa$.
		\end{mainlem}
		
		\begin{proof}
			Let $\bar{c}\coloneq\langle \dot{c}_\b:\b<\lambda \rangle$ be Cohen reals (as members of $\oo$) added in the first $\lambda$ stages.
			We shall show that $\bar{c}$ witnesses Fact \ref{fac_Tukey_order_equivalence_condition}\eqref{item_small_equiv}, an equivalent condition of $C_{[\lambda]^{<\kappa}}\lq \mathbf{BPR}$.
			
			Assume towards contradiction that there exist a condition $p\in\p_\gamma$ and a $\p_\gamma$-name of a predictor $\dot{\pi}=(\dot{A},\langle\dot{\pi}_k:k\in\dot{A}\rangle)$ such that 
			$p\Vdash |\{\b<\lambda:\dot{c}_\b\bpd \dot{\pi}\}|\geq\kappa$.
			For each $i<\kappa$, inductively pick $p_i\leq p$, $\b_i<\lambda$ and $n_i<\omega$ such that $\b_i\notin\{\b_{i^\prime}:i^\prime<i\}$ and $ p_i\Vdash\dot{c}_{\b_i}\bpd_{n_i}\dot{\pi}$.
			By extending and thinning, we may assume:
			
			\begin{enumerate}
				\item $\b_i\in\dom(p_i)$. (By extending $p_i$.)
				\item All $ p_i$ follow a common guardrail $h\in H$. ($|H|<\kappa$.)
				\item $\{p_i:i<\kappa\}$ forms a uniform $\Delta$-system with root $\nabla $. (By Lemma \ref{lem_uniform_Delta_System_Lemma}.)
				\item All $n_i$ are equal to $ n^*$.
				\item All $p_i(\b_i)$ are the same Cohen condition $s\in\seq$. 
				\item $|s|=n^*$. (By extending $s$ or increasing $n^*$.)
			\end{enumerate}
			
			In particular, we have that:
			\begin{equation}
				\label{eq_property_of_refined_pi}
				\text{For each }i<\kappa, p_i\text{ forces }\dot{c}_{\b_i}\on n^*=s\text{ and }\dot{c}_{\b_i}\bpd_{n^*}\dot{\pi}.
			\end{equation}
			
			Note that $\b_i\notin\nabla$ for $i<\kappa$ since all $\b_i$ are distinct. Pick the first $\omega$ many $p_i$ and fix some bijection $i\colon\seq\to\omega$.
			Fix any $n<\omega$. 
			(For simplicity, we assume $n\geq3$ ).
			For each $\sigma\in\omega^n$, define $q_\sigma\leq p_{i(\sigma)}$ by extending the $\b_{i(\sigma)}$-th position $q_\sigma(\b_{i(\sigma)}):=s^\frown\sigma$.
			By \eqref{eq_property_of_refined_pi}, we have:
			\begin{equation}
				\label{eq_property_of_qtau}
				\text{For each }\sigma\in\seq, q_\sigma\text{ forces }\dot{c}_{\b_{i(\sigma)}}\on(n^*+n)=s^\frown\sigma\text{ and }\dot{c}_{\b_{i(\sigma)}}\bpd_{n^*}\dot{\pi}.
			\end{equation}

			Fix $\tau\in\omega^{n-1}$ and we consider the sequence $\bar{q}_\tau\coloneq\langle q_{\tau^\frown m}:m<\omega\rangle$.
			When defining $q_\sigma$ we changed the $\b_{i(\sigma)}$-th position which is out of $\nabla$, so $\{q_{\tau^\frown m}:m<\omega\}$ forms a uniform $\Delta$-system with root $\nabla$, following some new countable partial guardrail $h^\prime$.
			Since $H$ is complete, $h^\prime$ is extended to some $h_\tau\in H$.
			Note that 
			\begin{equation}
				\label{eq_h_h_tau_conicide}
				h_\tau\on\nabla=h^\prime\on\nabla=h\on\nabla.
			\end{equation}
			Let $q^\infty_\tau:=\lim^{h_\tau}\bar{q}_\tau $. 
			
			By Lemma \ref{lem_cuf_guardrail_closedness}, $q^\infty_\tau$ follows $h_\tau$ and by Definition \ref{dfn_uflimit_condition_for_iteration}, $\dom(q^\infty_\tau)=\nabla$.
			Thus, by \eqref{eq_h_h_tau_conicide} and Fact \ref{fac_guarrail_folloing_only_domain}, $q^\infty_\tau$ also follows $h$.
			By \eqref{eq_property_of_qtau}, each $q_{\tau^\frown m}$ forces that:
			
			\begin{equation}
				\text{there exists }c\in\oo\text{ such that } c\on(n^*+n)=s^\frown\tau^\frown m\text{ and }c\bpd_{n^*}\dot{\pi}.
			\end{equation}
			
			Thus, we are under the assumption of Lemma \ref{lem_Prediction_Point_Exclusion_Lemma} and hence obtain:
			\begin{equation}
				q^\infty_\tau\text{ forces } n^*+n-1\notin\dot{A}\text{ and }\varphi_\tau,
			\end{equation}
			where:
			\begin{equation}
				\varphi_\tau:\equiv\text{``there exists }c\in\oo\text{ such that } c\on(n^*+n-1)=s^\frown\tau\text{ and }c\bpd_{n^*}\dot{\pi}\text{''.}
			\end{equation}
			
			Unfix $\tau$ and fix $\rho\in\omega^{n-2}$.
			We consider the sequence $\bar{q}_\rho\coloneq\langle q^\infty_{\rho^\frown m}:m<\omega\rangle$.
			Since all $q^\infty_{\rho^\frown m}$ follow $h$ and have domain $\nabla$, they form a uniform $\Delta$-system with root $\nabla$ and have a limit $q^\infty_\rho:=\lim^h\bar{q}_\rho$.
			Similarly, we have that $\dom(q^\infty_\rho)=\nabla$ and $q^\infty_\rho$ follows $h$. 
			Note that each $q^\infty_{\rho^\frown m}$ forces $n^*+n-1\notin\dot{A}$ and $\varphi_{\rho^\frown m}$.
			Thus, we are under the assumption of Lemma \ref{lem_Prediction_Point_Exclusion_Lemma} and hence obtain that $q^\infty_\rho$ forces:
			\begin{itemize}
				\item $n^*+n-1\notin\dot{A}$.
				\item $n^*+n-2\notin\dot{A}$.
				\item $\text{there exists }c\in\oo\text{ such that }c\on(n^*+n-2)=s^\frown\rho\text{ and }c\bpd_{n^*}\dot{\pi}$.
			\end{itemize}
			(The first item is direct from Corollary \ref{cor_UF-limit_Preservation_Property}.) 
			Continuing this way, we ultimately obtain $q^n\coloneq q^\infty_\emptyset$ with the following properties:
			\begin{itemize}
				\item $\dom(q^n)=\nabla$ and $q^n$ follows $h$ (hence they form an $h$-uniform $\Delta$-system).
				\item $q^n$ forces $ \left[n^*,n^*+n\right) \cap \dot{A}=\emptyset$.
			\end{itemize}
			Finally, unfix $n$ and let $q^\infty$ be the limit condition of the ultimate conditions $q^\infty:=\lim^h\langle q^n:n<\omega\rangle$.
			$q^\infty$ forces that 
			for infinitely many $n<\omega$, $\left[n^*,n^*+n\right) \cap \dot{A}=\emptyset$
			, which contradicts that $\dot{A}$ is infinite.
		\end{proof}
		
		
		\begin{rem}
			The tricks of the proof are as follows:
			\begin{itemize}
				\item By quantifying over $c$, we succeeded to define $\varphi_\tau$ without parameter $m$ and apply Lemma \ref{lem_Prediction_Point_Exclusion_Lemma}.
				\item By using the intervals $\left[n^*,n^*+n\right)$, we succeeded to capture the infinite set $\dot{A}$.
			\end{itemize}
		\end{rem}
		
		\subsection{Forcing-free characterization and concrete forcing notions}  
		We introduce a forcing-free characterization of ``$Q\subseteq\p$ is (c-)uf-lim-linked'': 
		\begin{lem}
			\label{lem_chara_UF_linked}
			Let $D$ be an ultrafilter on $\omega$, $\p$ a poset, $Q\subseteq\p$, $\lim^D\colon Q^\omega\to\p$.
			Then, the following are equivalent:
			\begin{enumerate}
				\item $\lim^D$ witnesses $Q$ is $D$-lim-linked.\label{item_lim_1}
				\item $\lim^D$ satisfies $(\star)_n$ below for all $n<\omega$:\label{item_lim_2}
				\begin{align*}
					(\star)_n:& \text{``Given }\bar{q}^j=\langle q_m^j:m<\omega\rangle\in Q^\omega\text{ for }j<n\text{ and }r\leq\textstyle{\lim^D}\bar{q}^j\text{ for all }j<n,\\
					&\text{then }\{m<\omega:r \text{ and all }q_m^j \text{ for } j<n \text{ have a common extension}\}\in D\text{''}.
				\end{align*}
			\end{enumerate}
		\end{lem}
		
		\begin{proof}
			$(1)\Rightarrow(2)$:
			Let $\dot{D}$ be the $\p$-name of an ultrafilter extending $D$ as in Definition \ref{dfn_UF_linked}. 
			Then, $r$ forces:
			\begin{gather*}
				\dot{A}\coloneq\bigcap_{j<n}\{m<\omega:q^j_m\in \dot{G}\}\in\dot{D},\text{ and }\\
				B\coloneq\{m<\omega:r \text{ and all }q_m^j \text{ for } j<n \text{ have a common extension}\}\supseteq \dot{A}.
			\end{gather*}
			 Thus, we have $B\in D$ since $B$ is in the ground model.
			
			$(2)\Rightarrow(1)$:
			For $\bar{q}=\langle q_m:m<\omega\rangle\in Q^\omega$, let $\Vdash_\p \dot{A}(\bar{q})\coloneq\{m<\omega:q_m\in\dot{G}\}$.
			We show $\Vdash_\p\text{``} D\cup\{\dot{A}(\bar{q}):\bar{q}\in Q^\omega\cap V,~\lim^D\bar{q}\in\dot{G}\}$ has SFIP''.
			If not, there exist $r\in\p$, $A\in D$, $n<\omega$ and $\bar{q}^j=\langle q_m^j:m<\omega\rangle\in Q^\omega$ satisfying $r\Vdash\lim^D\bar{q}^j\in\dot{G}$ for $j<n$, 
			such that $r\Vdash A\cap\bigcap_{j<n}\dot{A}(\bar{q}^j)=\emptyset$.
			We may assume that $r\leq\lim^D\bar{q}^j$ for all $j<n$.
			By $(\star)_n$, we can find some $m\in A$ and $\tilde{r}\leq r$ extending all $q^j_m$.
			Thus, $\tilde r\Vdash m\in A\cap\bigcap_{j<n}\dot{A}(\bar{q}^j)$, which is a contradiction.
			Hence, we can take a name of an ultrafilter $\dot{D}^\prime$ extending
			$D\cup\{\dot{A}(\bar{q}):\bar{q}\in Q^\omega\cap V,~\lim^D\bar{q}\in\dot{G}\}$.   
			Let $\bar{q}=\langle q_m:m<\omega\rangle\in Q^\omega$ be arbitrary.
			Since $\lim^D\bar{q}\Vdash\lim^D\bar{q}\in\dot{G}$ trivially holds,
			$\lim^D\bar{q}\Vdash\dot{A}(\bar{q})=\{m<\omega:q_m\in\dot{G}\}\in\dot{D}^\prime$ is obtained and we are done.
		\end{proof}
		
		This characterization in Lemma \ref{lem_chara_UF_linked} enables us to investigate whether a poset $\p$ has ultrafilter-limits without considering forcings. Note that the closedness can be easily checked, since the conditions \eqref{item_lim_1} and \eqref{item_lim_2} in Lemma \ref{lem_chara_UF_linked} share the same witness $\lim^D$.

		Using this characterization, we show that some concrete forcing notions have ultrafilter-limits.
		\begin{lem}(\cite{GKS})
			Eventually different forcing $\ev$ has ultrafilter-limits,
			where $\ev$ is defined as follows in this paper:
			\begin{itemize}
				\item $\ev\coloneq\{(s,k,\varphi):s\in\seq, k<\omega, \varphi\colon\omega\to [\omega]^{\le k}\}$.
				\item $(s^\prime,k^\prime,\varphi^\prime)\le (s,k,\varphi)$ 
				if $s^\prime\supseteq s$, 
				$k^\prime\ge k$, $\varphi^\prime(i) \supseteq \varphi(i)$ for all $i<\omega$ and $s^\prime(i)\notin\varphi(i)$ for all $i\in\dom(s^\prime\setminus s)$.
			\end{itemize}
			
		\end{lem}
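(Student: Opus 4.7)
The plan is to exhibit an explicit countable decomposition of $\ev$ into suff-c-uf-lim-linked components and then define the limit function directly. For each $s\in\seq$ and $k<\omega$, set
$$Q_{s,k} \coloneq \{(s,k,\varphi)\in\ev : \varphi\colon\omega\to[\omega]^{\leq k}\}.$$
Clearly $\ev=\bigcup_{s,k} Q_{s,k}$ and this is a countable union, so it suffices to show each $Q_{s,k}$ is suff-c-$D$-lim-linked for every ultrafilter $D$ on $\omega$.

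Fix $(s,k)$ and $D$. For $\bar{q}=\langle (s,k,\varphi_m):m<\omega\rangle\in Q_{s,k}^\omega$, define
$$\textstyle{\lim^D}\bar{q}\coloneq (s,k,\varphi^D),\qquad \varphi^D(i)\coloneq\{n<\omega : \{m<\omega : n\in\varphi_m(i)\}\in D\}.$$
The first key point is that $\varphi^D(i)\in[\omega]^{\leq k}$, which will give closedness $\ran(\lim^D)\subseteq Q_{s,k}$: if $n_0,\dots,n_k$ were distinct elements of $\varphi^D(i)$, then each set $\{m : n_j\in\varphi_m(i)\}$ lies in $D$, and taking their intersection (an element of $D$, hence nonempty) yields some $m$ with $|\varphi_m(i)|\geq k+1$, contradicting $q_m\in Q_{s,k}$.

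The second key point is to verify $(\star)_n$. Given $\bar{q}^j=\langle(s,k,\varphi^j_m):m<\omega\rangle$ for $j<n$ and a common lower bound $r=(s',k',\psi)\leq \lim^D\bar{q}^j$ for all $j<n$, the defining inequality forces $s'(i)\notin\varphi^{D,j}(i)$ for every $i\in\dom(s'\setminus s)$ and every $j<n$; equivalently, $A_{i,j}\coloneq\{m<\omega : s'(i)\notin\varphi^j_m(i)\}\in D$. Since $\dom(s'\setminus s)$ is finite and $n$ is finite, the intersection $A\coloneq\bigcap_{i,j}A_{i,j}\in D$. For any $m\in A$, the condition
$$\textstyle{(s',\,k'+nk,\,\psi')},\qquad \psi'(i)\coloneq\psi(i)\cup\bigcup_{j<n}\varphi^j_m(i)$$
is a common extension of $r$ and all $q^j_m$: the size bound $|\psi'(i)|\leq k'+nk$ holds since $|\psi(i)|\leq k'$ and each $|\varphi^j_m(i)|\leq k$, and the key ``no-collision'' requirement $s'(i)\notin\varphi^j_m(i)$ for $i\in\dom(s'\setminus s)$ is exactly what the choice of $m\in A$ ensures.

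There is really no obstacle: once the right decomposition is chosen and one recognizes that the pointwise ultrafilter-limit of bounded finite sets remains bounded, the verification of $(\star)_n$ reduces to intersecting finitely many $D$-large sets. The only subtle point worth flagging is the closedness clause, which would fail for an unbounded analogue (sets of arbitrary finite size) and is precisely what justifies fixing $k$ in the decomposition $Q_{s,k}$.
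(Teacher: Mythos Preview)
Your proof is correct and follows essentially the same approach as the paper: the same decomposition $Q_{s,k}$, the same pointwise ultrafilter-limit $\varphi^D$, and the same common extension $(s',k'+nk,\psi')$. The only differences are presentational---you verify $(\star)_n$ directly rather than reducing to $(\star)_1$ first, and you make the closedness check $|\varphi^D(i)|\leq k$ explicit, which the paper leaves implicit.
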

		
		\begin{proof}
			For $s\in\seq$ and $k<\omega$, let $Q_{s,k}\coloneq\{(s^\prime,k^\prime,\varphi)\in\ev:s^\prime=s,k^\prime=k\}$. We show $Q\coloneq Q_{s,k}$ is uf-lim-linked. Let $D$ be any ultrafilter and define $\lim^D\colon Q^\omega\to Q$, $\bar{q}=\langle q_m=(s,k,\varphi_m):m<\omega\rangle\mapsto(s,k,\varphi^\infty)$ as follows:
			\[ 
			j\in \varphi_\infty(i) \Leftrightarrow \{ m<\omega: j\in \varphi_m(i)\}\in D.
			\]
			We check first $(\star)_1$ and then $(\star)_n$ for $n\geq2$ ($(\star)_0$ trivially holds).
			
				\textit{The case $(\star)_1$}:
				Assume that $\bar{q}=\langle q_m:m<\omega\rangle\in Q^\omega$ and $r\leq\lim^D\bar{q}$.
				Let $r=(s^\prime,k^\prime,\varphi^\prime)$ and $\lim^D\bar{q}=(s,k,\varphi^\infty)$.
				Since $r\leq\lim^D\bar{q}$, $s^\prime(i)\notin \varphi_\infty(i)$ for all $i\in\dom(s^\prime\setminus s)$. 
				That is, $\{m<\omega:s^\prime(i)\in \varphi_m(i)\}\notin D$.
				Since $D$ is an ultrafilter, $A^i :=\{m: s^\prime(i)\notin \varphi_m(i)\}\in D$ for $i\in\dom(s^\prime\setminus s)$. 
				Let $A\coloneq\bigcap\{ A^i:i\in\dom(s^\prime\setminus s)\}\in D$ and for $m\in A$,  $s^\prime(i)\notin \varphi_m(i)$ for all $i\in\dom(s^\prime\setminus s)$. 
				Thus, $q_m$ is compatible with $r$.
				
				\textit{The case $(\star)_n$}: Suppose that:
				\begin{itemize}
					\item $n\geq2$ and $\bar{q}^j=\langle q_m^j:m<\omega\rangle\in Q^\omega$ for $j<n$,
					\item $r\leq\lim^D\bar{q}^j$ for all $j<n$,
				\end{itemize}
				By $(\star)_1$, $A\coloneq\bigcap_{j<n}\{m<\omega:r \text{ and }q^j_m \text{ are compatible}\}\in D$.
				Let $m\in A$ and define $r=(s^\prime,k^\prime,\varphi^\prime)$ and $q^j_m=(s,k,\varphi^j_m)$. Since $s^\prime\supseteq s$, $k^\prime\ge k$ and $r$ and $q_m^j$ are compatible,
				the condition $\tilde{r}\coloneq(s^\prime,k^\prime+nk,\tilde{\varphi})$ extends $r$ and all $q_m^j$ for $j<n$
				where $\tilde{\varphi}(i)\coloneq\varphi^\prime(i)\cup\bigcup_{j<n}\varphi^j_m(i)$ for each $i<\omega$.
				Thus, $A$ witnesses $(\star)_n$ holds.
		\end{proof}
		
		\begin{cor}
			\label{cor_E_is_centered_suff_cuf}
			$\mathbb{E}$ is $\sigma$-$\left( \Lambda(\text{centered})\cap\Lambda^\mathrm{lim}_\mathrm{cuf}\right) $-covered.
		\end{cor}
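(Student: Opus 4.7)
The plan is to exhibit the desired decomposition of $\ev$ directly by using the same partition $\ev=\bigcup_{s\in\seq,\,k<\omega}Q_{s,k}$ from the preceding lemma, and then verify the two properties (centeredness and closed-ultrafilter-limit-linkedness) piece by piece for each $Q_{s,k}$. Since $\seq\times\omega$ is countable, this partition is countable, so the $\sigma$-linkedness will follow.

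First I would verify that each $Q_{s,k}$ is centered. Given finitely many conditions $(s,k,\varphi_0),\dots,(s,k,\varphi_{n-1})\in Q_{s,k}$, define $\tilde\varphi(i):=\bigcup_{j<n}\varphi_j(i)$ for each $i<\omega$; then $|\tilde\varphi(i)|\leq nk$, so $(s,nk,\tilde\varphi)\in\ev$ is a legitimate condition. By the definition of the order on $\ev$ (the stem $s$ is unchanged, $nk\geq k$, the function coordinate grows, and there are no new points in $\dom(s\setminus s)=\emptyset$ to worry about), this condition extends each $(s,k,\varphi_j)$. Hence $Q_{s,k}\in\Lambda(\mathrm{centered})(\ev)$.

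Next I would transfer the linkedness property. The preceding lemma established that each $Q_{s,k}$ is suff-c-uf-lim-linked, with the witnessing map $\lim^D\colon Q_{s,k}^\omega\to Q_{s,k}$ defined by pointwise $D$-limit on the $\varphi$-coordinate. By Lemma \ref{lem_suff_has_uf} (applied for each ultrafilter $D$), any suff-$D$-lim-linked set is $D$-lim-linked with the same limit function; since that limit function takes values in $Q_{s,k}$, the closedness requirement $\ran(\lim^D)\subseteq Q_{s,k}$ is automatic, so $Q_{s,k}\in\Lambda^{\mathrm{lim}}_{\mathrm{c}D}(\ev)$. As $D$ was arbitrary, $Q_{s,k}\in\Lambda^{\mathrm{lim}}_{\mathrm{cuf}}(\ev)$.

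Combining the two properties, each $Q_{s,k}$ lies in $\Lambda(\mathrm{centered})\cap\Lambda^{\mathrm{lim}}_{\mathrm{cuf}}$, and $\ev$ is the union of countably many such sets, proving the corollary. There is no real obstacle; the only small point to double-check is that the common extension in the centeredness step is allowed to enlarge $k$, which the definition of $\ev$ (via $k'\geq k$ in the order) explicitly permits, so the $\varphi$-sizes can blow up by a factor of $n$ without issue.
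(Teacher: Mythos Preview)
Your proof is correct and follows essentially the same approach as the paper: you use the same partition $\ev=\bigcup_{s,k}Q_{s,k}$ from the preceding lemma, verify centeredness of each $Q_{s,k}$ directly, and observe that the limit function constructed there already lands in $Q_{s,k}$, so closedness is immediate. The paper's own proof is a one-liner recording exactly these two observations; you have simply written out the details (including the explicit appeal to Lemma~\ref{lem_suff_has_uf} to pass from suff-c-uf-lim-linked to c-uf-lim-linked).
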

		\begin{proof}
			$Q$ in the previous proof is centered and the limit function $\lim^D$ is closed in $Q$.
		\end{proof}

		The next example of a forcing notion with ultrafilter-limits is \textit{$g$-prediction forcing} $\pr_g$, which generically adds a $g$-predictor and hence increases $\ee_g$, introduced in \cite{Bre95}\footnote{The name ``prediction forcing'' and the notation $\pr_g$ are not common and were not used in the original paper \cite{Bre95}, but we use the name and the notation in this paper.}.
		
		\begin{dfn}
			\label{dfn_PR}
			Fix $g\in\left(\omega+1\setminus2\right)^\omega$.
			$g$-prediction forcing $\pr_g$ consists of tuples $(d,\pi,F)$ satisfying:
			
			\begin{enumerate}
				\item $d\in\sq$.
				\item $\pi=\langle\pi_n:n\in d^{-1}(\{1\})\rangle$.
				\item for each $n\in d^{-1}(\{1\})$, $\pi_n$ is a finite partial function of $\prod_{k<n}g(k)\to g(n)$.
				\item $F\in[\prod_{n<\omega}g(n)]^{<\omega}$
				\item for each $f,f^\prime\in F, f\on|d|=f^\prime\on|d|$ implies $f=f^\prime$.
			\end{enumerate}
			$(d^\prime,\pi^\prime,F^\prime)\leq(d,\pi,F)$ if:
			\begin{enumerate}[(i)]

				\item $d^\prime\supseteq d$.
				\item $\forall n\in d^{-1}(\{1\}), \pi_n^\prime\supseteq\pi_n$.
				\item $F^\prime\supseteq F$.
				\item \label{item_PR_order_long}
				For all $ n\in(d^\prime)^{-1}(\{1\})\setminus d^{-1}(\{1\})$ and $f\in F$, we have $f\on n\in\dom(\pi^\prime_n)$ and $\pi^\prime_n(f\on n)=f(n)$.
				
			\end{enumerate}
			When $g(n)=\omega$ for all $n<\omega$, we write $\pr$ instead of $\pr_g$ and just call it ``prediction forcing''.
		\end{dfn}

		We introduce a useful notation:
		\begin{dfn}
			For $N<\omega$, let $\zero_N $ be the sequence of length $N$ whose values are all $0$.
			Namely, $\zero_N \coloneq N\times 1$.
			
		\end{dfn}
		%
		%
		%
		%

		\begin{lem}(\cite{BS_E_and_P_2})
			\label{lem_pr_is_uflinked}
			$\pr_g$ has ultrafilter-limits for any $g\in\left(\omega+1\setminus2\right)^\omega$.
			If $g\in\left(\omega\setminus2\right)^\omega$, 
			then $\pr_g$ has closed-ultrafilter-limits.
		\end{lem}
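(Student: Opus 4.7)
The plan is to cover $\pr_g$ by countably many centered pieces $Q_{d,\pi,s}$, indexed by a stem $d \in \sq$, finite prediction data $\pi$ on $d^{-1}(\{1\})$, and a finite $s \subseteq \prod_{k<|d|} g(k)$, where
\[
Q_{d,\pi,s} = \{(d, \pi, F) \in \pr_g : F \on |d| = s\},
\]
and to equip each piece with an ultrafilter limit. There are only countably many such triples because $\prod_{k<|d|} g(k)$ is countable and $\pi$ consists of finite partial functions on a finite domain. For finitely many $(d, \pi, F_i) \in Q_{d,\pi,s}$, I would verify centeredness by picking $L$ large enough that every pair of distinct functions in $\bigcup_i F_i$ is distinguished at position $L$ and forming $(d^\frown \zero_{L-|d|}, \pi, \bigcup_i F_i)$; since no new prediction points are introduced, the ordering condition (iv) is vacuous.

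For the closed case $g \in (\omega \setminus 2)^\omega$, define $\lim^D \colon Q_{d,\pi,s}^\omega \to Q_{d,\pi,s}$ by
\[
\lim^D \langle (d, \pi, F_m) \rangle_m = (d, \pi, F_\infty),
\]
where $F_\infty = \{f_\infty^\sigma : \sigma \in s\}$, $f_\infty^\sigma \on |d| = \sigma$, and $f_\infty^\sigma(k)$ is the $D$-limit of $(f_m^\sigma(k))_m$ for $k \geq |d|$; the $D$-limit exists because $g(k)$ is finite, so $\lim^D \bar q \in Q_{d,\pi,s}$, giving closedness. To verify $(\star)_n$, fix $\bar q^j$ for $j<n$ and $r = (d', \pi', F') \leq \lim^D \bar q^j$, and set $K = |d'|$. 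A finite intersection of $D$-sets produces $D$-many $m$ with $f_m^{j,\sigma} \on K = f_\infty^{j,\sigma} \on K$ for all $j, \sigma$; for each such $m$, a common extension of $r$ and all $q_m^j$ is $(d'^\frown \zero_{L-K}, \pi'', F' \cup \bigcup_j F_m^j)$ for large $L$, where $\pi''$ extends $\pi'$ by $f_m^{j,\sigma} \on n \mapsto f_m^{j,\sigma}(n)$ for each $n \in (d')^{-1}(\{1\}) \setminus d^{-1}(\{1\})$. Consistency with $\pi'_n$ follows from $\pi'_n(f_\infty^{j,\sigma} \on n) = f_\infty^{j,\sigma}(n) = f_m^{j,\sigma}(n)$.

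For the general case $g \in (\omega+1 \setminus 2)^\omega$, the pointwise $D$-limit can fail at positions where $g(k) = \omega$, so $\lim^D \bar q$ must be a strengthening of the members of $Q_{d,\pi,s}$; this is why only the non-closed conclusion is claimed. I would build $\lim^D \bar q = (d^*, \pi^*, F^*)$ by recursion on positions: for each $\sigma \in s$, maintain a shrinking live set $A^\sigma \in D$ and define $f_\infty^\sigma(k)$ as the $D$-limit of $f_m^\sigma(k)$ restricted to $A^\sigma$ as long as such a limit exists; at the first ``bad'' position where no such limit exists, extend $d$ by a $1$-bit at that position (with $0$-bits below) so that it becomes a prediction point of $\lim^D \bar q$ itself. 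This is decisive because the clause of condition (iv) for $r \leq \lim^D \bar q$ applies only to $n \in (d')^{-1}(\{1\}) \setminus (d^*)^{-1}(\{1\})$, and promoting a bad position into a prediction point of $d^*$ removes the offending constraint there; the value of $\pi^*$ at each inserted prediction point is set compatibly with the $f_m^\sigma$ on the current live set.

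The main obstacle is orchestrating this recursion simultaneously for all $\sigma \in s$: the bad positions can differ per $\sigma$, and the inserted $\pi^*$ values must remain pairwise consistent as well as compatible with $f_m^{j,\sigma}$ for $D$-many $m$ when $(\star)_n$ is later verified. Handling this requires a careful induction on pairs $(\sigma, k)$ and coordination between the live sets $A^\sigma$. Once the combinatorics is in place, $(\star)_n$ is verified as in the closed case: outside the finitely many inserted prediction points, the ordinary $D$-limit argument matches $f_m^{j,\sigma}$ with $f_\infty^{j,\sigma}$ for $D$-many $m$, and the common extension of $r$ and $q_m^j$ is constructed exactly as before.
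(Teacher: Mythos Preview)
Your decomposition into pieces $Q_{d,\pi,s}$, the centeredness argument, and the closed case $g \in (\omega \setminus 2)^\omega$ are all correct and essentially the paper's approach.

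The general case $g \in (\omega + 1 \setminus 2)^\omega$, however, has a genuine error. You propose to insert a \emph{$1$-bit} at each bad position $n_\sigma$ (the first position where the $D$-limit of $f_m^\sigma$ fails), reasoning that ``promoting a bad position into a prediction point of $d^*$ removes the offending constraint there.'' This is backwards. The constraint you must discharge in $(\star)_1$ is condition~(iv) for a common extension of $r$ and $q_m$, and that is taken relative to the stem $d$ of $q_m$, not relative to $d^*$. Putting a $1$ at $n_\sigma$ in $d^*$ forces $d'(n_\sigma) = 1$ (since $d' \supseteq d^*$), so $n_\sigma \in (d')^{-1}(\{1\}) \setminus d^{-1}(\{1\})$ and condition~(iv) \emph{does} apply there. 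Concretely: for $D$-many $m$ the restriction $f_m^\sigma \restriction n_\sigma$ equals the fixed sequence $f_\infty^\sigma \restriction n_\sigma$ (the $D$-limit exists below $n_\sigma$), while the values $f_m^\sigma(n_\sigma)$ diverge. An adversarial $r \leq \lim^D\bar q$ may then set $\pi'_{n_\sigma}(f_\infty^\sigma \restriction n_\sigma)$ to any single value, and no $D$-large set of $q_m$ will be compatible with $r$ at position $n_\sigma$. Your ``live set'' idea cannot help here: no finite partial function $\pi^*_{n_\sigma}$ controls what $r$ does at that point.

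The paper's fix is the opposite move: extend $d$ by \emph{$0$-bits} up to $n^\infty := \max\{n_\sigma + 1 : \sigma \text{ bad}\}$, and \emph{drop} the bad $\sigma$'s from $F^\infty$ entirely (so $\lim^D\bar q = (d^\frown\zero_{n^\infty - |d|},\, \pi,\, F^\infty)$). Since $d' \supseteq d^\infty$ must then carry $0$'s on $[|d|, n^\infty)$, no bad position is a prediction point of $d'$ at all, and condition~(iv) is vacuous there. The remaining new prediction points of $d'$ lie at positions $n \geq n^\infty > n_\sigma$; at such $n$, the finiteness of $\dom(\pi'_n)$ combined with the $D$-unboundedness of $f_m^\sigma(n_\sigma)$ gives $f_m^\sigma \restriction n \notin \dom(\pi'_n)$ for $D$-many $m$, so $\pi'_n$ can be freely extended to accommodate $f_m^\sigma$.
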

		
		\begin{proof}
			Fix $k<\omega$, $d,\pi$ and $f^*=\{f^*_l\in\omega^{|d|}:l<k\}$ where each $f^*_l$ is different.
			Let $Q\subseteq\pr_g$ consist of every $(d^\prime,\pi^\prime,F^\prime)$
			such that $d^\prime=d, \pi^\prime=\pi$ and $F^\prime=\{f^\prime_l:l<k\})$ 
			where $f^\prime_l\on|d|=f^*_l$ for all  $l<k$.
			It is enough to show that $Q$ is uf-lim-linked.
			Let $D$ be any ultrafilter.
			For $\bar{f}=\langle f^i\in\oo:i<\omega\rangle$, 
			if $\bar{f}$ satisfies that for each $n<\omega$, there (uniquely) exists $a_n<\omega$ such that $\{i<\omega:f^i(n)=a_n\}\in D$,
			we define $\bar{f}^\infty\in\oo$ by $ \bar{f}^\infty(n)=a_n$ for each $n<\omega$.
			Note that ${\bar{f}}^\infty$ is always defined if $g\in\left(\omega\setminus2\right)^\omega$.
			
			For $\bar{q}=\langle q_m=(d,\pi,\{f^m_l:l<k\})\in\pr:m<\omega\rangle\in Q^\omega$,
			define $\lim^D\bar{q}$ as follows: 
			\begin{itemize}
				\item $\bar{f}_l\coloneq\langle f^m_l:m<\omega\rangle$, $A\coloneq\{l<k:{\bar{f}_l^\infty}\text{ exists}\}, B\coloneq k\setminus A$.
				\item $F^\infty\coloneq\{{\bar{f}_l^\infty}:l\in A\}$.
				\item For $l\in B$, let $n_l$ be the first $n<\omega$ where no $a<\omega$ satisfies: 
				
				$\{m<\omega:f^m_l(n)=a\}\in D$ (hence $n_l\geq|d|$).
				\item $n^\infty\coloneq\max\{n_l+1:l\in B\}$ (if $B=\emptyset$, $n^\infty\coloneq|d|$).
				\item $d^\infty\coloneq d^\frown\zero_{n^\infty-|d|}\in2^{n^\infty}$.
				\item $\lim^D\bar{q}\coloneq(d^\infty,\pi,F^\infty)$.
			\end{itemize}
			To see $\lim^D\bar{q}$ is a condition, it is enough to show that any $\bar{f}_l^\infty,\bar{f}_{l^\prime}^\infty\in F^\infty $ with $\bar{f}_l^\infty\on|d^\infty|=\bar{f}_{l^\prime}^\infty\on|d^\infty|$ satisfies $\bar{f}_l^\infty=\bar{f}_{l^\prime}^\infty$.
			Since $f^*_l=\bar{f}_{l}^\infty\on|d|=\bar{f}_{l^\prime}^\infty\on|d|=f^*_{l^\prime}$, we have $l=l^\prime$ and hence $\lim^D\bar{q}$ is a valid condition.
			
			Note that if $g\in\left(\omega\setminus2\right)^\omega$, then $A=k, B=\emptyset, n^\infty=|d|, d^\infty=d$ and hence $\lim^D\bar{q}\in Q$ (for any $D$).
			
			So, it is enough to show that $\lim^D$ satisfies $(\star)_n$ for $n<\omega$. We check first $(\star)_n$ for $n\geq2$ assuming $(\star)_1$ and then $(\star)_1$ since the former is easier to show.
			
				\textit{The case $(\star)_n$}: Assuming $(\star)_1$, suppose that:
			\begin{itemize}
				\item $n\geq2$ and $\bar{q}^j=\langle q_m^j:m<\omega\rangle\in Q^\omega$ for $j<n$,
				\item $r\leq\lim^D\bar{q}^j$ for all $j<n$,
			\end{itemize}
			By $(\star)_1$, $A\coloneq\bigcap_{j<n}\{m<\omega:r \text{ and }q^j_m \text{ are compatible}\}\in D$.
			Let $m\in A$ and put $r=(d^\prime,\pi^\prime,F^\prime)$ and $q^j_m=(d,\pi,F^j_m)$. Then, the condition $\tilde{r}\coloneq({d^\prime}^\frown\zero_N,\pi^\prime,F^\prime\cup\bigcup_{j<n} F^j_m)$ extends $r$ and all $q_m^j$ for $j<n$ where $N$ is large enough to satisfy 
			that any distinct functions in $F^\prime\cup\bigcup_{i<n}F_i$ have different values before $|d^\prime|+N$ (hence $\tilde{r}$ is a condition and hereafter we use ``$N$ is large enough'' in this sense).
			
			\textit{The case $(\star)_1$}: Assume that $\bar{q}=\langle q_m:m<\omega\rangle\in Q^\omega$ and $r\leq\lim^D\bar{q}=(d^\infty,\pi,F^\infty)$.
			Let $r\coloneq (d^r,\pi^r,F^r)$ and $\mathrm{New}\coloneq(d^r)^{-1}(\{1\})\setminus (d^\infty)^{-1}(\{1\})$.
			
			Fix $l\in A$ and $n\in\mathrm{New}$.
			By the definition of $\bar{f}_l^\infty$,
			$ X_0\coloneq\{m<\omega: f^m_l\on(n+1)=\bar{f}_l^\infty\on(n+1)\}\in D$.
			Along with $r\leq\lim^D\bar{q}$, for $m\in X_0$, $\pi^r_n(f^m_l\on n)=\bar{f}_l^\infty(n)=f^m_l(n)$.
			
			Unfixing $l$ and $n$, we have:
			\begin{equation}
				\label{eq_pr_limit_1}
				X_1\coloneq\{m<\omega:\pi^r_n(f^m_l\on n)=f^m_l(n)\text{ for all }l\in A\text{ and } n\in\mathrm{New}\}\in D.
			\end{equation}
			Fix $l\in B$ and $n\in\mathrm{New}$.
			Let $m_n\coloneq \max\{\sigma(j):\sigma\in\dom(\pi^r_n),j<n\}$.
			Since $l\in B$, $X_2\coloneq\{m<\omega: f^m_l(n_l)>m_n\}\in D$.
			Since $n_l<n^\infty\leq n$, for all $m\in X_2, f^m_l\on n\notin\dom(\pi^r_n)$.
			Unfixing $l$ and $n$, we have:
			\begin{equation}
				\label{eq_pr_limit_2}
				X_3\coloneq\{m<\omega:f^m_l\on n\notin\dom(\pi^r_n)\text{ for all }l\in B\text{ and } n\in\mathrm{New}\}\in D.
			\end{equation}
			It is enough to show that for all $m\in X_1\cap X_3$, $q_m$ and $r$ are compatible.
			Fix such $m$ and define (as a common extension of $q_m$ and $r$) $q^\prime\coloneq (d^\prime,\pi^\prime,F^\prime)$ as follows:
			\begin{itemize}
				\item $F^\prime\coloneq F^r\cup \{f^m_l:l<k\}$.
				\item $d^\prime=d^{r\frown} \zero_N $ where $N$ is large enough.
				\item For all $n\in (d^r)^{-1}(\{1\}), \pi^\prime_n\supseteq\pi^r_n$ and for all $l\in B$ and $n\in\mathrm{New}, \pi^\prime_n(f^m_l\on n)=f^m_l(n)$
				(This can be done by \eqref{eq_pr_limit_2}).
			\end{itemize}
			
			$q^\prime\leq r$ trivially holds since $(d^\prime)^{-1}(\{1\})\setminus (d^r)^{-1}(\{1\})=\emptyset$.
			
			To see $q^\prime\leq q_m$, we have to show:
			\begin{equation}
				\label{eq_pr_limit_3}
				\text{For all } l<k \text{ and } n\in(d^r)^{-1}(\{1\})\setminus (d)^{-1}(\{1\}), \pi^\prime_n(f^m_l\on n)=f^m_l(n).
			\end{equation}
			If $l\in A$, \eqref{eq_pr_limit_1} implies \eqref{eq_pr_limit_3}, while if $l\in B$, \eqref{eq_pr_limit_3} holds by the definition of $\pi^\prime$.
		\end{proof}
		
		\begin{cor}
			\label{cor_PR_is_sigma_centered_suff}
			$\pr_g$ is $\sigma$-$\left( \Lambda(\text{centered})\cap\Lambda^\mathrm{lim}_\mathrm{uf}\right) $-covered.
			Moreover, 
			if $g\in(\omega\setminus2)^\omega$, 
			$\pr_g$ is $\sigma$-$\left( \Lambda(\text{centered})\cap\Lambda^\mathrm{lim}_\mathrm{cuf}\right)$-covered.
		\end{cor}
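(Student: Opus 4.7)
The plan is simply to combine the previous lemma with Lemma \ref{lem_suff_has_uf}. First, recall that the previous lemma provides, for each tuple $(d,\pi,f^*)$ with $f^* = \{f^*_l : l<k\}$, a subset $Q = Q_{d,\pi,f^*}$ of $\pr_g$ which is centered and suff-uf-lim-linked (and moreover suff-c-uf-lim-linked when $g\in(\omega\setminus 2)^\omega$, since in that case the constructed limit function $\lim^D$ satisfies $\ran(\lim^D)\subseteq Q$). The collection $\{Q_{d,\pi,f^*}\}$ is countable (as $d\in\sq$, $\pi$ ranges over finite tuples of finite partial functions with finite range, and $f^*$ is a finite subset of $\omega^{|d|}$) and covers $\pr_g$, so $\pr_g$ is $\sigma$-centered-suff-uf-lim-linked.

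First I would observe that, by Lemma \ref{lem_suff_has_uf}, any suff-uf-lim-linked set is uf-lim-linked with the \emph{same} witness function $\lim^D$. Hence each $Q_{d,\pi,f^*}$ lies in $\Lambda(\text{centered})\cap\Lambda^{\lim}_\mathrm{uf}(\pr_g)$, proving the first assertion.

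For the moreover-clause, I would note that Lemma \ref{lem_suff_has_uf} works identically for the closed versions: if $Q$ is suff-$D$-lim-linked with witness $\lim^D\colon Q^\omega\to\p$ such that $\ran(\lim^D)\subseteq Q$, then the same $\lim^D$ witnesses $D$-lim-linkedness of $Q$ (the name $\dot{D}'$ constructed in that proof does not affect where $\lim^D$ maps to), and in particular $Q$ is c-$D$-lim-linked. Applied uniformly over all ultrafilters $D$, this gives $\suff$-c-uf $\subseteq\Lambda^{\lim}_\mathrm{cuf}$. Combining with the previous lemma's closed statement for $g\in(\omega\setminus 2)^\omega$ yields that each $Q_{d,\pi,f^*}\in\Lambda(\text{centered})\cap\Lambda^{\lim}_\mathrm{cuf}(\pr_g)$, completing the proof.

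There is no real obstacle here; the work has already been done in the previous lemma, and the corollary is just a packaging of its conclusion through the implication $\suff\subseteq\Lambda^{\lim}_\mathrm{uf}$ (together with its closed counterpart).
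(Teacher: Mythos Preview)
Your proposal is correct and follows the same route as the paper: invoke the $Q_{d,\pi,f^*}$ from Lemma~\ref{lem_pr_is_uflinked} and apply Lemma~\ref{lem_suff_has_uf} (and its immediate closed variant) to pass from suff-(c-)uf-lim-linked to (c-)uf-lim-linked. The paper's own proof differs only in that it supplies the explicit verification that each $Q$ is centered---namely, finitely many $(d,\pi,F_i)$ have the common extension $(d^\frown\zero_N,\pi,\bigcup_i F_i)$ for $N$ large enough---a detail which was asserted in the statement of Lemma~\ref{lem_pr_is_uflinked} but not spelled out in its proof; you take this as already established, which is reasonable.
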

		\begin{proof}
			$Q$ in the previous proof is centered since any finitely many conditions $\{(d,\pi,F_i)\in Q:i<n\}$ have a common extension $r=(d^\frown\zero_N ,\pi,\bigcup_{i<n}F_i)$ where $N$ is large enough.
		\end{proof}

		\section{Separation} \label{sec_separation}
		\subsection{Separation of the left side}
		\newcommand{\ind}{i}
		\newcommand{\pst}{\mathbb{P}}
		We are ready to prove the main theorem, \textit{Cicho\'n's maximum with evasion number}. We roughly follow the flow of the original construction of Cicho\'n's maximum in \cite{GKS} and \cite{GKMS}, i.e., we first separate the left side of the diagram by performing a fsi 
		and then the right side by submodel method introduced in \cite{GKMS}.

		\begin{dfn}
			\begin{itemize}
				\item $\br_0\coloneq\mathbb{C}$, the Cohen forcing.
				\item $\R_1\coloneq\mathbf{Lc}^*$ and $\br_1\coloneq\mathbb{A}$, the Amoeba forcing.
				\item $\R_2\coloneq\mathbf{Cn}$ and $\br_2\coloneq\mathbb{B}$, the random forcing.
				\item $\R_3\coloneq\mathbf{D}$ and $\br_3\coloneq\mathbb{D}$, the Hechler forcing.
				\item $\R_4\coloneq\mathbf{PR}$, $\R_4^*\coloneq\mathbf{BPR}$ and $\br_4\coloneq\mathbb{PR}$.
				\item $\R_5\coloneq\mathbf{Mg}$ and $\br_5\coloneq\mathbb{E}$.
			\end{itemize}
			Let $I\coloneq 6$ and $I^+\coloneq I\setminus \{0\}$ be the index sets.
		\end{dfn}
		Hence, $\br_{\ind}$ is the poset which increases $\bb(\R_\ind)$ for each $\ind\in I^+$.
		Also note that $\R_4^*\lq\R_4$.
		\begin{ass}
			\label{ass_card_arith}
			
			\begin{enumerate}
				\item $\lambda_1<\cdots<\lambda_6$ are regular uncountable cardinals.
				\item $\lambda_3=\mu_3^+$ and $\lambda_4=\mu_4^+$ are successor cardinals and $\mu_3$ is regular.
				\item \label{item_aleph1_inacc}$\kappa<\lambda_\ind$ implies $\kappa^{\aleph_0}<\lambda_\ind$ for all $\ind\in I^+$.
				\item \label{item_ca_3}
				$\lambda_6^{<\lambda_5}=\lambda_6$, hence $\lambda_6^{<\lambda_\ind}=\lambda_6$ for all $\ind\in I^+$.

			\end{enumerate}
		\end{ass}
		
		\begin{lem}
			\label{lem_ccc_rs}
			Every ccc poset forces $C_{[\lambda_6]^{<\lambda_\ind}}\cong_T C_{[\lambda_6]^{<\lambda_\ind}}\cap V\cong_T[\lambda_6]^{<\lambda_\ind}\cap V\cong_T[\lambda_6]^{<\lambda_\ind}$ for any $\ind\in I^+$.
		\end{lem}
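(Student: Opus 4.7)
The plan is to reduce to the two facts already recorded in Section \ref{sec_RS_PT}: Fact \ref{fac_suff_eq_CI_and_I} (which gives $C_{[A]^{<\theta}}\cong_T[A]^{<\theta}$ whenever $|A|^{<\theta}=|A|$ and $\theta$ is regular) and Fact \ref{fac_cap_V} (which says that $\theta$-cc forcing preserves both $[A]^{<\theta}$ and $C_{[A]^{<\theta}}$ up to Tukey equivalence with their restrictions to $V$). Everything else is a bookkeeping check on the cardinal arithmetic contained in Assumption \ref{ass_card_arith}.

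First I would verify in $V$ that $\lambda_6^{<\lambda_\ind}=\lambda_6$ for every $\ind\in I$. Since $\lambda_\ind\leq\lambda_5$ and Assumption \ref{ass_card_arith}\eqref{item_ca_3} gives $\lambda_6^{<\lambda_5}=\lambda_6$, we obtain $\lambda_6\leq\lambda_6^{<\lambda_\ind}\leq\lambda_6^{<\lambda_5}=\lambda_6$. Because $\lambda_\ind$ is regular and uncountable, Fact \ref{fac_suff_eq_CI_and_I} applied with $A=\lambda_6$, $\theta=\lambda_\ind$ yields $C_{[\lambda_6]^{<\lambda_\ind}}\cong_T[\lambda_6]^{<\lambda_\ind}$ in $V$. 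The Tukey connection witnessing this equivalence lives in $V$, so the same pair of maps restrict to witness $C_{[\lambda_6]^{<\lambda_\ind}}\cap V\cong_T[\lambda_6]^{<\lambda_\ind}\cap V$ in any extension, which supplies the middle equivalence of the chain.

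Next, let $\p$ be any ccc poset. Since $\lambda_\ind\geq\aleph_1$, $\p$ is in particular $\lambda_\ind$-cc, and $|\lambda_6|\geq\lambda_\ind$. Fact \ref{fac_cap_V} with $\theta=\lambda_\ind$ and $A=\lambda_6$ then gives $\Vdash_\p[\lambda_6]^{<\lambda_\ind}\cong_T[\lambda_6]^{<\lambda_\ind}\cap V$ and $\Vdash_\p C_{[\lambda_6]^{<\lambda_\ind}}\cong_T C_{[\lambda_6]^{<\lambda_\ind}}\cap V$. Chaining these two outer equivalences with the middle one obtained above yields the full four-term Tukey equivalence forced by $\p$, as required.

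There is no substantive obstacle here: the statement is a direct corollary of the two earlier facts together with Assumption \ref{ass_card_arith}\eqref{item_ca_3}, which is exactly the hypothesis needed to make the ``$|A|^{<\theta}=|A|$'' clause of Fact \ref{fac_suff_eq_CI_and_I} hold simultaneously for every $\ind\in I$. The only point worth flagging is that the middle equivalence is inherited from $V$ and does not itself require any preservation argument, whereas the outer two rely essentially on ccc-ness via Fact \ref{fac_cap_V}.
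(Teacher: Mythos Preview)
Your proposal is correct and follows exactly the approach of the paper, which simply records that the lemma is direct from Fact~\ref{fac_suff_eq_CI_and_I}, Fact~\ref{fac_cap_V}, and Assumption~\ref{ass_card_arith}\eqref{item_ca_3}. You have merely spelled out the details of how these three ingredients combine.
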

		
		\begin{proof}
			Direct from Fact \ref{fac_suff_eq_CI_and_I}, Fact \ref{fac_cap_V} and Assumption \ref{ass_card_arith} \eqref{item_ca_3}.
		\end{proof}
		
		Thus, we often identify the four relational systems in Lemma \ref{lem_ccc_rs} in this section.
		
		To satisfy Assumption \ref{ass_first_Cohen}, we define the following: 
		\begin{dfn}
			Put $\lambda\coloneq\lambda_6$, $S_0\coloneq\lambda$, $\gamma\coloneq\lambda+\lambda$, the length of the iteration we shall perform. Fix some cofinal partition $S_1\cup\cdots\cup S_5=\gamma\setminus S_0$ and for $\xi<\gamma$, let $\ind(\xi)$ denote the unique $\ind\in I$ such that $\xi\in S_\ind$.
		\end{dfn}
		
		We additionally assume the following cardinal arithmetic to obtain a complete set of guardrails:
		\begin{ass}
			\label{ass_for_complete_guardrail}
			$\lambda_6\leq2^{\mu_3}$.
		\end{ass}

		\begin{lem}
			For $j=3,4$ and $\xi<\gamma$, let $\theta^j_{\xi}\coloneqq\mu_j$ if $\ind(\xi)\leq j$ and $\theta^j_{\xi}\coloneqq\omega$ if $\ind(\xi)> j$.
			Then, there exist complete sets $H$ and $H^\prime$ of guardrails of length $\gamma$ for $\lambda_3$-$\Lambda^\mathrm{lim}_\mathrm{uf}$-iteration of size $<\lambda_3$ with $\langle\theta_\xi^3:\xi<\gamma\rangle$, and $\lambda_4$-$\Lambda^\mathrm{lim}_\mathrm{cuf}$-iteration of size $<\lambda_4$ with $\langle\theta_\xi^4:\xi<\gamma\rangle$, respectively.  
		\end{lem}
		
		\begin{proof}
			Direct from Corollary \ref{cor_complete}, Assumption \ref{ass_card_arith} \eqref{item_aleph1_inacc} and \ref{ass_for_complete_guardrail}.
		\end{proof}

		\begin{con}
			\label{con_P6}
			We can construct a ccc finite support iteration $\pst_\mathrm{pre}$ of length $\gamma$ satisfying the following items:
			\begin{enumerate}
				\item \label{item_P6_1}
				$\pst_\mathrm{pre}$ is a $\lambda_3$-$\Lambda^\mathrm{lim}_\mathrm{uf}$-iteration and has $\Lambda^\mathrm{lim}_\mathrm{uf}$-limits on $H$ 
				with the following witnesses:
				\begin{itemize}
					\item $\langle\p_\xi^-:\xi<\gamma\rangle$, the complete subposets witnessing $\Lambda^\mathrm{lim}_\mathrm{uf}$-linkedness.
					\item $\bar{Q}=\langle\dot{Q}_{\xi,\zeta}:\zeta<\theta_\xi^3,\xi<\gamma\rangle$, the $\Lambda^\mathrm{lim}_\mathrm{uf}$-linked components.
					\item $\bar{D}=\langle \dot{D}^h_\xi:\xi\leq\gamma,~h\in H \rangle$, the ultrafilters.
					\item $S^-\coloneq S_0\cup S_1\cup S_2\cup S_3$, the trivial stages and $S^+\coloneq S_4\cup S_5$, the non trivial stages.
				\end{itemize}
				\item \label{item_P6_2}
				$\pst_\mathrm{pre}$ is also a $\lambda_4$-$\Lambda^\mathrm{lim}_\mathrm{cuf}$-iteration and has $\Lambda^\mathrm{lim}_\mathrm{cuf}$-limits on $H^\prime$ 
				with the following witnesses:
				\begin{itemize}
					\item $\langle\p_\xi^-:\xi<\gamma\rangle$, the (same as above) complete subposets witnessing $\Lambda^\mathrm{lim}_\mathrm{cuf}$-linkedness.
					\item $\bar{R}=\langle\dot{R}_{\xi,\zeta}:\zeta<\theta_\xi^4,\xi<\gamma\rangle$, the $\Lambda^\mathrm{lim}_\mathrm{cuf}$-linked components.
					\item $\langle \dot{E}^{h^\prime}_\xi:\xi\leq\gamma,~h^\prime\in H^\prime\rangle$, the ultrafilters.
					\item $T^-\coloneq S_0\cup S_1\cup S_2\cup S_3\cup S_4$, the trivial stages and $T^+\coloneq S_5$, the non-trivial stages.
				\end{itemize}
				\item 
				\label{item_N}
				For each $\xi\in\gamma\setminus S_0$, $N_\xi\preccurlyeq H_\Theta$ is a submodel where $\Theta$ is a sufficiently large regular cardinal satisfying:
				\begin{enumerate}
					\item $|N_\xi|<\lambda_{\ind(\xi)}$.
					\item \label{item_sigma_closed}
					$N_\xi$ is $\sigma$-closed, i.e., $(N_\xi)^\omega\subseteq N_\xi$.
					
					\item 
					\label{item_N_e}For any $\ind\in I^+$, $\eta<\gamma$ and set of (nice names of) reals $A$ in $V^{\p_\eta}$ of size $<\lambda_\ind$,
					there is some $\xi\in S_\ind$ (above $\eta$) such that $A\subseteq N_\xi$.
					\item If $\ind(\xi)=4$, then $\{\dot{D}^h_\xi:h\in H \}\subseteq N_\xi$.
					\item If $\ind(\xi)=5$, then $\{\dot{D}^h_\xi:h\in H \},\{\dot{E}^{h^\prime}_\xi:h^\prime\in H^\prime \}\subseteq N_\xi$.
					
				\end{enumerate}
				
				\item For $\xi\in S_0$, $\p_\xi^-\coloneq\p_\xi$ and for $\xi\in \gamma\setminus S_0$, $\p_\xi^-\coloneq\p_\xi\cap N_\xi$ (since $\p_\xi$ is ccc and $N_\xi$ is $\sigma$-closed, $\p_\xi^-\lessdot\p_\xi$).
				
				\item For each $\xi<\gamma$,
				$\p^-_\xi\Vdash\qd_\xi\coloneq\br_{\ind(\xi)}$. 
				
				(Here, $\br_{\ind(\xi)}$ does not denote a forcing poset in the ground model, but denotes the poset interpreted in the  $\p^-_\xi$-extension.
				Also note that $|\qd_\xi|<\lambda_{\ind(\xi)}$ holds since there are at most $|\p^-_\xi|^{\aleph_0}\leq|N_\xi|^{\aleph_0}<\lambda_{\ind(\xi)}$-many reals in the $\p^-_\xi$-extension. Moreover, for $\xi\in S_0$, $\qd_\xi$ is always (forced to be) the same Cohen forcing $\mathbb{C}$.)
				\item $\bar{Q}$ and $\bar{R}$ are determined in the canonical way:
				In the case of $\bar{Q}$, if $\ind(\xi)=0,1,2,3$, split $\qd_\xi$ into singletons and if $\ind(\xi)=4,5$, split the $\sigma$-$\Lambda^\mathrm{lim}_\mathrm{uf}$-linked iterand $\qd_\xi$ into the $\omega$-many $\Lambda^\mathrm{lim}_\mathrm{uf}$-linked components. In the case of $\bar{R}$, do it similarly.


			\end{enumerate}

		\end{con}
		We explain why the construction is possible:
		
		\textit{Successor step.}
		At stage $\xi(\geq\lambda)$, we can take some $N_\xi$ satisfying Construction \ref{con_P6}\eqref{item_N},
		by Assumption \ref{ass_card_arith}\eqref{item_aleph1_inacc} for Construction \ref{con_P6}\eqref{item_N}\eqref{item_sigma_closed} and by Assumption \ref{ass_card_arith} \eqref{item_ca_3} for Construction \ref{con_P6}\eqref{item_N}\eqref{item_N_e}, the bookkeeping condition.
		By Lemma \ref{lem_uf_const_succ}, Corollary \ref{cor_E_is_centered_suff_cuf} and Corollary \ref{cor_PR_is_sigma_centered_suff}, we obtain suitable $\{\dot{D}^h_{\xi+1}:h\in H \}$ and $\{\dot{E}^{h^\prime}_{\xi+1}:h^\prime\in H^\prime \}$ (including the case $\ind(\xi)=0$): E.g., consider the most complicated case $\ind(\xi)=4$. Since $\xi\in S^+$, $(\dot{D}^h_\xi)^-=\dot{D}^h_\xi\cap V^{\p_\xi^-}$ and $\dot{D}^h_\xi\in N_\xi$. Hence by Lemma \ref{lem_D_minus_in_suff}, $(\dot{D}^h_\xi)^-\in V^{\p^-_\xi}$ holds. 
		Thus, the assumption \eqref{eq_minus} for $\dot{D}^h_\xi$ is satisfied and since $\xi\in T^-$, Lemma \ref{lem_uf_const_succ} can be applied for both $\dot{D}^h_\xi$ and $\dot{E}^{h^\prime}_\xi$. The other cases are similar and simpler: If $\ind(\xi)=0,1,2,3$, then $\xi$ is trivial for both $\bar{D}$ and $\bar{E}$. If $\ind(\xi)=5$, then $\xi$ is non-trivial for both, but $\{\dot{D}^h_\xi:h\in H \},\{\dot{E}^{h^\prime}_\xi:h^\prime\in H^\prime \}\subseteq N_\xi$. Hence, Lemma \ref{lem_uf_const_succ} can be applied in any case. 
		
		\textit{Limit step.}
		Direct from Lemma \ref{lem_uf_const_all}, Corollary \ref{cor_E_is_centered_suff_cuf} and Corollary \ref{cor_PR_is_sigma_centered_suff}.

		%
		%
		%
		%
		Thus, we can perform the iteration construction.
		
		\begin{thm}
			\label{thm_p6_forces}
			$\pst_\mathrm{pre}$ forces for each $\ind\in I^+$,
			$\R_\ind\cong_T C_{[\lambda_6]^{<\lambda_\ind}}\cong_T[\lambda_6]^{<\lambda_\ind}$, in particular, $\bb(\R_\ind)=\lambda_\ind$ and $\dd(\R_\ind)=2^{\aleph_0}=\lambda_6$ (the same things also hold for $\R_4^*$)
			(see Figure \ref{fig_left_of_CMe}).
		\end{thm}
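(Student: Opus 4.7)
Since Lemma~\ref{lem_ccc_rs} gives $C_{[\lambda_6]^{<\lambda_\ind}}\cong_T[\lambda_6]^{<\lambda_\ind}$ in $V^{\pst_\mathrm{pre}}$ for free, the task reduces to proving $\R_\ind\cong_T C_{[\gamma]^{<\lambda_\ind}}$ for every $\ind\in I$ in the extension; the cardinal values then follow from Fact~\ref{Tukey order and b and d}. I would prove both Tukey directions separately: the ``inside'' bound $C_{[\gamma]^{<\lambda_\ind}}\preceq_T\R_\ind$, which keeps $\bb(\R_\ind)$ small, comes from the preservation machinery of Sections~\ref{sec_RS_PT}--\ref{sec_uflimit}; the ``outside'' bound $\R_\ind\preceq_T C_{[\gamma]^{<\lambda_\ind}}$, which makes $\bb(\R_\ind)$ large, comes from bookkeeping together with the cofinal additions of $\br_\ind$-generics at stages in $S_\ind$.

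For the inside bound, I would dispatch $\ind\in\{1,2,5\}$ by the appropriate part of Corollary~\ref{cor_smallness_for_addn_and_covn_and_nonm}, verifying in each case that every iterand $\qd_\xi=\br_{\ind(\xi)}$ meets the hypothesis: for $\ind=1$ each iterand is a subalgebra of random ($\amo,\mathbb{B}$) or $\sigma$-centered ($\mathbb{D},\pr,\ev$, via Corollaries~\ref{cor_PR_is_sigma_centered_suff} and~\ref{cor_E_is_centered_suff_cuf}); for $\ind=2$ the Amoeba and random iterands have size $<\lambda_2$ while the rest are $\sigma$-centered; for $\ind=5$ every iterand has size $<\lambda_{\ind(\xi)}\leq\lambda_5$ thanks to $|\qd_\xi|<\lambda_{\ind(\xi)}$. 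For $\ind=3$, Theorem~\ref{thm_uf_limit_keeps_b_small} applies directly with the $\lambda_3$-$\Lambda^\mathrm{lim}_\mathrm{uf}$-iteration structure and the complete guardrail set $H$ of size $<\lambda_3$ furnished by Construction~\ref{con_P6}\eqref{item_P6_1}. For $\ind=4$---and simultaneously for $\R_4^*=\mathbf{BPR}$---Main Lemma~\ref{lem_Main_Lemma} applies with the $\lambda_4$-$\Lambda^\mathrm{lim}_\mathrm{cuf}$-structure on $H^\prime$ furnished by Construction~\ref{con_P6}\eqref{item_P6_2}.

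For the outside bound, by Fact~\ref{fac_Tukey_order_equivalence_condition}(1) it suffices to show that in $V^{\pst_\mathrm{pre}}$ every subset $F$ of the challenge space of $\R_\ind$ with $|F|<\lambda_\ind$ is $\R_\ind$-bounded. Since $\pst_\mathrm{pre}$ is ccc and $|F|<\lambda_\ind\leq\gamma$, $F$ already lives in $V^{\p_\eta}$ for some $\eta<\gamma$; the bookkeeping condition Construction~\ref{con_P6}\eqref{item_N}\eqref{item_N_e} then yields some $\xi\in S_\ind$ above $\eta$ with $F\subseteq N_\xi$, so that $F\in V^{\p^-_\xi}$. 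The $\qd_\xi=\br_\ind$-generic object added at stage $\xi$ supplies the required $\R_\ind$-response: a slalom catching $F$ ($\ind=1$), a random real avoiding $\bigcup\{N^*_x:x\in F\}$ ($\ind=2$), a dominating real ($\ind=3$), a predictor exactly---hence also boundingly---predicting every $f\in F$ ($\ind=4$, also handling $\R_4^*$), and, for $\ind=5$, a member of $\Xi$ derived from the $\ev$-generic that catches $F$ (using the Tukey equivalence $\mathbf{Mg}\cong_T C_\mathcal{M}$ to turn the generic meagerness of $V^{\p^-_\xi}\cap 2^\omega$ into an explicit $M_f\supseteq F$). Combining both bounds gives $\R_\ind\cong_T C_{[\gamma]^{<\lambda_\ind}}$, whence the cardinal values follow.

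The principal difficulty is that one and the same iteration must simultaneously fulfill the hypotheses of four separate preservation frameworks: the goodness requirements of Corollary~\ref{cor_smallness_for_addn_and_covn_and_nonm} for $\mathbf{Lc}^*$, $\mathbf{Cn}$ and $\mathbf{Mg}$; the $\Lambda^\mathrm{lim}_\mathrm{uf}$-limit structure required by Theorem~\ref{thm_uf_limit_keeps_b_small} for $\mathbf{D}$; and the new $\Lambda^\mathrm{lim}_\mathrm{cuf}$-limit structure required by Main Lemma~\ref{lem_Main_Lemma} for $\mathbf{BPR}$. This compatibility is exactly what Construction~\ref{con_P6} was engineered to deliver, via its two parallel witness systems $(\bar{Q},\bar{D})$ and $(\bar{R},\bar{E})$ with distinct trivial-stage partitions $S^\pm$ and $T^\pm$; the essential new ingredient for the $\ind=4$ bound is the closed-ultrafilter-limit framework together with Main Lemma~\ref{lem_Main_Lemma}, which is the single most delicate verification in the entire construction.
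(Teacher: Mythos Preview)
Your proposal is correct and follows essentially the same route as the paper, which cites the bookkeeping clause Construction~\ref{con_P6}\eqref{item_N}\eqref{item_N_e} together with Fact~\ref{fac_Tukey_order_equivalence_condition} for the outside bound, and then Corollary~\ref{cor_smallness_for_addn_and_covn_and_nonm} (cases $\ind=1,2,5$), Theorem~\ref{thm_uf_limit_keeps_b_small} ($\ind=3$), and Main Lemma~\ref{lem_Main_Lemma} ($\ind=4$, via $\R_4^*\preceq_T\R_4$) for the inside bound. One small slip in your verification for $\ind=1$: Amoeba forcing $\mathbb{A}$ is \emph{not} a subalgebra of random forcing (indeed $\mathbb{A}$ adds dominating reals while random forcing is $\omega^\omega$-bounding); the Amoeba iterands instead fall under the ``size $<\theta$'' clause of Corollary~\ref{cor_smallness_for_addn_and_covn_and_nonm}, since for $\xi\in S_1$ one has $|\qd_\xi|\leq|\p^-_\xi|^{\aleph_0}\leq|N_\xi|^{\aleph_0}<\lambda_1$ by Construction~\ref{con_P6} and Assumption~\ref{ass_card_arith}\eqref{item_aleph1_inacc}.
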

		
		\begin{proof}
			
			Fact \ref{fac_Tukey_order_equivalence_condition} and Construction \ref{con_P6} \eqref{item_N}\eqref{item_N_e} imply $\R_\ind\lq C_{[\lambda_6]^{<\lambda_\ind}}$ for all $\ind\in I^+$.
			
			For $\ind=1,2$ and $5$, applying Corollary \ref{cor_smallness_for_addn_and_covn_and_nonm} with $\theta=\lambda_\ind$, 
			we obtain $C_{[\lambda_6]^{<\lambda_\ind}}\lq\R_\ind$.
			
			For $\ind=3$, Theorem \ref{thm_uf_limit_keeps_b_small} implies $C_{[\lambda_6]^{<\lambda_3}}\lq\R_3$.
			
			For $\ind=4$, Main Lemma \ref{lem_Main_Lemma} implies $C_{[\lambda_6]^{<\lambda_4}}\lq\R_4^*\lq\R_4$.
		\end{proof}

		\begin{figure}
			
			\centering
			\begin{tikzpicture}
				\tikzset{
					textnode/.style={text=black}, 
				}
				\tikzset{
					edge/.style={color=black, thin, opacity=0.5}, 
				}
				\newcommand{\w}{2.4}
				\newcommand{\h}{1.5}
				
				\node[textnode] (addN) at (0,  0) {$\addn$};
				\node (l1) [fill=lime, draw, text=black, circle,inner sep=1.0pt] at (-0.2*\w, 0.7*\h) {$\lambda_1$};
				
				\node[textnode] (covN) at (0,  \h*3) {$\covn$};
				\node (l2) [fill=lime, draw, text=black, circle,inner sep=1.0pt] at (0.18*\w, 2.5*\h) {$\lambda_2$};

				\node[textnode] (addM) at (\w,  0) {$\cdot$};
				\node[textnode] (b) at (\w,  1*\h) {$\bb$};
				\node (l3) [fill=lime, draw, text=black, circle,inner sep=1.0pt] at (0.8*\w, 0.8*\h) {$\lambda_3$};
				
				\node[textnode] (nonM) at (\w,  \h*3) {$\nonm$};
				\node (l5) [fill=lime, draw, text=black, circle,inner sep=1.0pt] at (1.35*\w, 3.35*\h) {$\lambda_5$};
				
				\node[textnode] (covM) at (\w*2,  0) {$\covm$};

				\node[textnode] (d) at (\w*2,  2*\h) {$\dd$};

				\node[textnode] (cofM) at (\w*2,  \h*3) {$\cdot$};

				\node[textnode] (nonN) at (\w*3,  0) {$\nonn$};

				\node[textnode] (cofN) at (\w*3,  \h*3) {$\cofn$};

				\node[textnode] (aleph1) at (-\w,  0) {$\aleph_1$};
				\node[textnode] (c) at (\w*4,  \h*3) {$2^{\aleph_0}$};
				\node (l6) [fill=lime, draw, text=black, circle,inner sep=2.0pt] at (3.5*\w, 1.5*\h) {\scalebox{3.0}{$\lambda_6$}};
				
				\node[textnode] (e) at (0.5*\w,  2*\h) {$\mathfrak{e}$}; 
				\node[textnode] (estar) at (\w,  2*\h) {$\ee^*$};
				\node (l4) [fill=lime, draw, text=black, circle,inner sep=1.0pt] at (1.18*\w, 2.35*\h) {$\lambda_4$};
				
				\node[textnode] (pr) at (2.5*\w,  1*\h) {$\mathfrak{pr}$}; 
				\node[textnode] (prstar) at (\w*2,  1*\h) {$\mathfrak{pr}^*$};

				\draw[->, edge] (addN) to (covN);
				\draw[->, edge] (addN) to (addM);
				\draw[->, edge] (covN) to (nonM);	
				\draw[->, edge] (addM) to (b);
				
				\draw[->, edge] (addM) to (covM);
				\draw[->, edge] (nonM) to (cofM);
				
				\draw[->, edge] (d) to (cofM);
				\draw[->, edge] (b) to (prstar);
				\draw[->, edge] (covM) to (nonN);
				\draw[->, edge] (cofM) to (cofN);
				\draw[->, edge] (nonN) to (cofN);
				\draw[->, edge] (aleph1) to (addN);
				\draw[->, edge] (cofN) to (c);

				\draw[->, edge] (e) to (covM);
				\draw[->, edge] (addN) to (e);

				\draw[->, edge] (covM) to (prstar);
				\draw[->, edge] (nonM) to (pr);
				\draw[->, edge] (pr) to (cofN);
				
				\draw[->, edge] (e) to (estar);
				\draw[->, edge] (b) to (estar);
				\draw[->, edge] (estar) to (nonM);
				\draw[->, edge] (estar) to (d);
				
				\draw[->, edge] (prstar) to (d);
				\draw[->, edge] (prstar) to (pr);
				
				\draw[->, edge] (prstar) to (pr);

				\draw[blue,thick] (0.35*\w,2.6*\h)--(1.5*\w,2.6*\h);
				\draw[blue,thick] (0.35*\w,1.5*\h)--(0.35*\w,2.6*\h);

				\draw[blue,thick] (1.5*\w,-0.5*\h)--(1.5*\w,3.5*\h);
				\draw[blue,thick] (0.5*\w,2.6*\h)--(0.5*\w,3.5*\h);

				\draw[blue,thick] (0.5*\w,-0.5*\h)--(0.5*\w,1.5*\h);
				
				\draw[blue,thick] (-0.5*\w,-0.5*\h)--(-0.5*\w,3.5*\h);
				\draw[blue,thick] (-0.5*\w,1.5*\h)--(1.5*\w,1.5*\h);

			\end{tikzpicture}
			\caption{Constellation of $\pst_\mathrm{pre}$ and $\pst_\mathrm{mid}$.}\label{fig_left_of_CMe}
		\end{figure}
		
		\subsection{Recovery of GCH}
		To apply the submodel method to separate the right side, we actually need some cardinal arithmetic (see \cite{GKMS}, \cite{forcing_constellations}), which is satisfied (particularly) under GCH, but \textit{conflicts} with Assumption \ref{ass_for_complete_guardrail}, which we have used for the completeness of the sets $H$ and $H^\prime$ of guardrails.
		To avoid the conflict, we shall reconstruct the iteration under GCH. The idea is as follows: By considering some extension model where Assumption \ref{ass_for_complete_guardrail} is satisfied, we construct $\pst_\mathrm{pre}$ there. At the same time, we mimic the construction in the \textit{ground model} and obtain an iteration $\pst_\mathrm{mid}$ (in the ground model), which actually forces the same separation.
		While the argument of recovering GCH is already described in \cite{GKS},
		we give an explanation below for the sake of completeness\footnote{There is a slight difference between their construction and ours in the sense that they use \textit{groundmodel-code-sequences} while we use submodels $N_\xi$, but the ideas are essentially the same, so are the arguments of the recovery of GCH.}.

		%
		%

		Assume GCH hereafter and Assumption \ref{ass_card_arith}.
		
		\begin{con}
			\label{con_mid}
			We shall construct a fsi $\pst_\mathrm{mid}=\langle\p_\xi,\qd_\xi\rangle_{\xi<\gamma}$ and submodels $\langle N_\xi:\xi\in\gamma\setminus S_0\rangle$ such that $\Vdash_{\p_\xi}\qd_\xi\coloneq\mathbb{C}$ for $\xi\in S_0$ and for $\xi\in\gamma\setminus S_0$:
			\begin{enumerate}
				\item $|N_\xi|<\lambda_{\ind(\xi)}$.
				\item 
				$N_\xi$ is $\sigma$-closed.
				\item 
				For any $\ind\in I^+$, $\eta<\gamma$ and set of (nice names of) reals $A$ in $V^{\p_\eta}$ of size $<\lambda_\ind$,
				there is some $\xi\in S_\ind$ (above $\eta$) such that $A\subseteq N_\xi$.
				
				\item 
				$\p^-_{\xi}\Vdash \qd_\xi=\br_{\ind(\xi)}$ where $\p^-_{\xi}\coloneq\p_{\xi}\cap N_\xi$.
			\end{enumerate}
		\end{con}
		
		Let $\Ad\coloneq\mathrm{Add}(\mu_3,\lambda_6)=\bigcup\{2^A:A\in[\mu_3\times\lambda_6]^{<\mu_3}\}$.
		Since $\mu_3$ is regular,
		$\Ad$ is $<\mu_3$-closed and since $\mu_3^{<\mu_3}=\mu_3$ (by GCH), 
		it is $\mu_3^+$-cc (by standard $\Delta$-system argument).
		Note that it forces $2^{\mu_3}=\lambda_6$ and hence Assumption \ref{ass_for_complete_guardrail} is satisfied in the $\Ad$-extension (and so is Assumption \ref{ass_card_arith}). Hence, Construction \ref{con_P6} and Theorem \ref{thm_p6_forces} hold in the $\Ad$-extension, even if $H_\Theta$ is replaced with $M\coloneq H_\Theta^V$ (Since $\Theta$ is sufficiently large, $M$ contains all we shall require even in the $\Ad$-extention. Hereafter whenever considering Construction \ref{con_P6} in the $\Ad$-extension, we assume this replacement).

		
		We explain why Construction \ref{con_mid} is possible.
		We inductively construct $\pst_\mathrm{mid}$ in the ground model and $\pst_\mathrm{pre}$ in the $\Ad$-extension simultaneously, both of which share the same $\bar{N}=\langle N_\xi:\xi\in\gamma\setminus S_0\rangle$.
		Note that since $\Ad$ is $<\mu_3$-closed and $\mu_3^+$-cc, $[\mathbb{R}]^{<\lambda_\ind}\cap V$ is cofinal in $[\mathbb{R}]^{<\lambda_\ind}\cap V^\Ad$ for all $\ind \in I^+$, ($\mathbb{R}$ denotes the (same) set of all reals) and hence the bookkeeping conditions are the same. 
		Assume that $\xi\in\gamma\setminus S_0$ and we have constructed in the ground model $\langle N_\eta:\eta\in\xi\setminus S_0\rangle$ and in the $\Ad$-extension all objects of $\pst_\mathrm{pre}$ with parameters $\eta<\xi$.
		If $\ind(\xi)=1,2,3$, since what we require for $N_\xi$ are the same, we can pick some $N_\xi$ which is suitable for both of the constructions.
		If $\ind(\xi)=4,5$, in the $\Ad$-extension $N_\xi$ has to contain names of ultrafilters which are only in the extension.
		We first obtain an $\Ad$-name $\dot{N}^0_\xi$ with the suitable properties in Construction \ref{con_P6}\eqref{item_N}.
		Since $\Vdash_\Ad \dot{N}^0_\xi \subseteq M\subseteq V $ and $\Ad$ is $\mu_3^+$-cc, we can obtain (in the ground model) a $\sigma$-closed submodel $N^1_\xi\supseteq\dot{N}^0_\xi$ of size $<\lambda_{\ind(\xi)}$.
		Again getting into the $\Ad$-extension, we obtain a ($\Ad$-name of) $\sigma$-closed submodel $\dot{N}^2_\xi\supseteq N_\xi^1$ of size $<\lambda_{\ind(\xi)}$ with the suitable properties in Construction \ref{con_P6}\eqref{item_N}. Again, in the ground model, we can obtain a $\sigma$-closed submodel $N^3_\xi\supseteq\dot{N}^2_\xi$ of size $<\lambda_{\ind(\xi)}$. Continuing this way $\omega_1$-many times (At limit steps, take the union of all the previous (names of) submodels.), we ultimately obtain $N_\xi\coloneq N^{\omega_1}_\xi$ in the ground model, which satisfies the suitable properties in the $\Ad$-extension.
		In the extension, $\pst_\mathrm{mid}$ and $\pst_\mathrm{pre}$ are essentially the same since they share the same $\bar{N}$, which determines their structures.
		
		We show that $\pst_\mathrm{mid}$ also forces the consequence of Theorem \ref{thm_p6_forces}.
		By a bookkeeping argument, $\R_\ind\lq C_{[\lambda_6]^{<\lambda_\ind}}$ holds for each $\ind\in I^+$.
		For $\ind=1,2$ and $5$,
		we similarly obtain $C_{[\lambda_6]^{<\lambda_\ind}}\lq\R_\ind$.
		For $\ind=3,4$ by Fact \ref{fac_Tukey_order_equivalence_condition},
		it is enough to show that the first $\lambda_6$-many Cohen reals $\langle\dot{c}_\b:\b<\lambda_6\rangle$ witness that every response $\dot{y}$ meets only $<\lambda_{\ind}$-many $\dot{c}_\b$. Given such a $\pst_\mathrm{mid}$-name $\dot{y}$, working in the $\Ad$-extension and interpreting $\dot{y}$ as a $\pst_\mathrm{pre}$-name, we obtain some $B\in[\lambda_6]^{<\lambda_\ind}$ such that for any $\b\in\lambda_6\setminus B$, $\dot{c}_\b$ is not met by $\dot{y}$. Since $\Ad$ is $\mu_3^+$-cc, we can find such $B$ in the ground model. Now, for any $\b\in\lambda_6\setminus B$, $\Vdash_\Ad\Vdash_{\pst_\mathrm{pre}}$ ``$\dot{c}_\b$ is not met by $\dot{y}$'' and by absoluteness, $\Vdash_{\pst_\mathrm{mid}}$ ``$\dot{c}_\b$ is not met by $\dot{y}$'' and we are done. Hence we obtain the following theorem:
		\begin{thm}
			\label{thm_p6_GCH}
			Assume GCH and Assumption \ref{ass_card_arith}.
			Then, there exists a ccc poset $\pst_\mathrm{mid}$ which forces for each $\ind\in I^+$,
			$\R_\ind\cong_T C_{[\lambda_6]^{<\lambda_\ind}}\cong_T[\lambda_6]^{<\lambda_\ind}$, in particular, $\bb(\R_\ind)=\lambda_\ind$ and $\dd(\R_\ind)=2^{\aleph_0}=\lambda_6$ (the same things also hold for $\R_4^*$).
		\end{thm}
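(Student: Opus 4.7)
The plan is to build $\pst_\mathrm{mid}$ directly in $V$ while running, in parallel, the construction of $\pst_\mathrm{pre}$ from the previous subsection inside an auxiliary generic extension in which Assumption \ref{ass_for_complete_guardrail} holds, and then to transfer the separation result back to $V$. For the auxiliary extension I would take $\Ad=\mathrm{Add}(\mu_3,\lambda_6)$: under GCH this poset is ${<}\mu_3$-closed and $\mu_3^+$-cc (by a standard $\Delta$-system argument), it adds no reals, and it forces $2^{\mu_3}=\lambda_6$, so both Assumption \ref{ass_card_arith} and Assumption \ref{ass_for_complete_guardrail} hold in $V^{\Ad}$. Hence Construction \ref{con_P6} and Theorem \ref{thm_p6_forces} go through verbatim in $V^{\Ad}$.

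The crucial idea is that both iterations should share the same sequence $\bar N=\langle N_\xi:\xi<\gamma\rangle$ of submodels, where $\bar N$ itself lies in $V$. At stage $\xi$ I would construct $N_\xi\in V$ by an internal back-and-forth. If $\ind(\xi)\in\{1,2,3\}$ the stage is trivial for both the $\bar D$ and $\bar E$ sequences, so any $\sigma$-closed elementary $N_\xi\preccurlyeq H_\Theta^V$ of size ${<}\lambda_{\ind(\xi)}$ meeting the common bookkeeping requirement works. If $\ind(\xi)\in\{4,5\}$ then $N_\xi$ must moreover contain the extension-side names $\dot D^h_\xi$ and $\dot E^{h'}_\xi$, so I would iterate $\omega_1$ times, alternating between a $\sigma$-closed elementary submodel in $V^{\Ad}$ that captures the needed names and a $\sigma$-closed elementary submodel in $V$ containing the previous stage, taking unions at countable limits. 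Since $\Ad$ is $\mu_3^+$-cc and adds no reals, nice names of reals in $V^{\Ad}$ can be coded by sets in $V$, so the size stays ${<}\lambda_{\ind(\xi)}$ at each step. The union $N_\xi$ lies in $V$ and is $\sigma$-closed in both $V$ and $V^{\Ad}$, hence it serves simultaneously for $\pst_\mathrm{mid}$ and $\pst_\mathrm{pre}$. The bookkeeping requirement matches on both sides because $[\mathbb{R}]^{<\lambda_\ind}\cap V$ is cofinal in $[\mathbb{R}]^{<\lambda_\ind}\cap V^{\Ad}$. By construction, $\pst_\mathrm{mid}$ and $\pst_\mathrm{pre}$ then coincide as forcings in $V^{\Ad}$, since their skeletons $\bar N$ and their iterands $\qd_\xi=\br_{\ind(\xi)}$ agree.

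Once $\pst_\mathrm{mid}$ is built, the upper-direction reductions $\R_\ind\lq C_{[\lambda_6]^{<\lambda_\ind}}$ are immediate from the bookkeeping, and the lower-direction reductions $C_{[\lambda_6]^{<\lambda_\ind}}\lq\R_\ind$ for $\ind\in\{1,2,5\}$ follow from Corollary \ref{cor_smallness_for_addn_and_covn_and_nonm} applied in $V$. For $\ind\in\{3,4\}$ I would invoke Fact \ref{fac_Tukey_order_equivalence_condition}(\ref{item_small_equiv}) with the first $\lambda_\ind$-many Cohen reals $\langle\dot c_\beta:\beta<\lambda_\ind\rangle$: given a $\pst_\mathrm{mid}$-name $\dot y$ for a response, reinterpret it as a $\pst_\mathrm{pre}$-name in $V^{\Ad}$; by Theorem \ref{thm_p6_forces} applied in $V^{\Ad}$ there is some $\alpha<\lambda_\ind$ such that $\pst_\mathrm{pre}\Vdash$ ``$\dot c_\beta$ is not responded by $\dot y$'' for every $\beta\in[\alpha,\lambda_\ind)$, and since $\Ad$ is $\mu_3^+$-cc and $\lambda_\ind\geq\lambda_3=\mu_3^+$ is regular, such an $\alpha$ can be chosen already in $V$.

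The main obstacle is the final absoluteness step: one must check that the statement ``$\dot c_\beta$ is not responded by $\dot y$'' transfers from $V^{\Ad}[\pst_\mathrm{pre}]$ down to $V[\pst_\mathrm{mid}]$. This is where the $\sigma$-closure of $\Ad$ is essential, because $\Ad$ adds no reals and the responding relations $\sqsubset$ of the Polish relational systems $\mathbf{D}$ and $\mathbf{PR}$ are $F_\sigma$ in the two real parameters, hence absolute between transitive models containing them. Combined with the agreement of $\pst_\mathrm{mid}$ and $\pst_\mathrm{pre}$ as forcings in $V^{\Ad}$, the forcing statement then holds over $V$ as well, completing the transfer.
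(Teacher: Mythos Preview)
Your proposal is correct and follows essentially the same route as the paper: force with $\mathrm{Add}(\mu_3,\lambda_6)$ to obtain Assumption~\ref{ass_for_complete_guardrail}, run Construction~\ref{con_P6} there while building the shared sequence $\bar N$ in $V$ via the $\omega_1$-length back-and-forth for $\ind(\xi)\in\{4,5\}$, and then transfer the $\ind\in\{3,4\}$ lower bounds back to $V$ by absoluteness of the arithmetic relation ``$\dot c_\beta$ is not responded by $\dot y$''. Your write-up is in fact slightly more explicit than the paper's on two points: why the final absoluteness step works (you spell out that $\Ad$ adds no reals and the $\sqsubset$-relations are $F_\sigma$), and why the bound $\alpha<\lambda_\ind$ can be found already in $V$ (via the $\mu_3^+$-cc of $\Ad$).
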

		
		\subsection{Separation of the right side}
		Thanks to Theorem \ref{thm_p6_GCH}, we are now in the situation where we can apply the \textit{submodel method}, which was introduced in \cite{GKMS} and enables us to separate the right side of the diagram.
		
		\begin{thm}
			\label{thm_p6fin}
			Assume GCH and 
			$\aleph_1\leq\theta_1\leq\cdots\leq\theta_{10}$ are regular and $\theta_\cc$ is an infinite cardinal such that $\theta_\cc\geq\theta_{10}$ and $\theta_\cc^{\aleph_0}=\theta_\cc$.
			Then, there exists a ccc poset $\pst_{\mathrm{fin}}$ which forces $\bb(\R_\ind)=\theta_\ind$ and $\dd(\R_\ind)=\theta_{11-\ind}$ for each $\ind\in I^+$ (the same things also hold for $\R_4^*$) and $\cc=\theta_\cc$ (see Figure \ref{fig_right_of_CMe}).
		\end{thm}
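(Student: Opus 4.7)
The plan is to apply the submodel method of \cite{GKMS} (see also \cite{forcing_constellations}) to the iteration $\pst_\mathrm{mid}$ from Theorem \ref{thm_p6_GCH}. First, set the parameters of that theorem by $\lambda_\ind := \theta_\ind$ for $\ind \in I$ and $\lambda_6 := \theta_\cc$. If $\theta_3$ or $\theta_4$ are not successor cardinals, a preliminary cardinal collapse arranges clause (2) of Assumption \ref{ass_card_arith}; under GCH and $\theta_\cc^{\aleph_0} = \theta_\cc$ this is harmless. Theorem \ref{thm_p6_GCH} then yields a ccc poset $\pst_\mathrm{mid}$ of size $\theta_\cc$ forcing $\R_\ind \cong_T [\theta_\cc]^{<\theta_\ind}$ for each $\ind \in I$ and for $\R_4^*$; hence $\bb(\R_\ind) = \theta_\ind$ holds already, but $\dd(\R_\ind) = \theta_\cc$ is too large. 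The task is to bring the $\dd$-values down to $\theta_{11-\ind}$ without disturbing the $\bb$-values or $\cc$.

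Using GCH, build a chain of internally approachable elementary submodels $M_5 \prec M_4 \prec M_3 \prec M_2 \prec M_1 \prec M_\cc \prec H_\chi$ for sufficiently large regular $\chi$, with $|M_\ind| = \theta_{11-\ind}$, $(M_\ind)^{<\theta_{11-\ind}} \subseteq M_\ind$ for $\ind \in I$, and $|M_\cc| = \theta_\cc$. Arrange that each $M_\ind$ contains $\pst_\mathrm{mid}$, the bookkeeping sequence $\bar N$, the partition $\langle S_\ind : \ind \in I\rangle$, the guardrail sets $H, H'$, and $\theta_\ind \subseteq M_\ind$. Define
\[
  \pst_\mathrm{fin} := \{\, p \in \pst_\mathrm{mid} \cap M_\cc : \forall \xi \in \dom(p)\;\forall \ind \in I,\; \xi \in S_\ind \Rightarrow p(\xi) \in M_\ind \,\}.
\]
The core technical claim is $\pst_\mathrm{fin} \lessdot \pst_\mathrm{mid}$: this follows, as in \cite{GKMS}, from the $\sigma$-closure of each $M_\ind$, the ccc of $\pst_\mathrm{mid}$, and an application of Lemma \ref{lem_D_minus_in_suff} inside each $M_\ind$, which ensures that maximal antichains of $\pst_\mathrm{fin}$ remain maximal in $\pst_\mathrm{mid}$.

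Next, verify the cardinal invariants in $V^{\pst_\mathrm{fin}}$. For $\bb(\R_\ind) = \theta_\ind$: the first $\theta_\ind$ Cohen reals added at $S_\ind$-limit stages below $\theta_\ind$ all lie in $\pst_\mathrm{fin}$ (since $\theta_\ind \subseteq M_\ind$ and the $S_\ind$-initial Cohen stages are definable), and the preservation arguments applied to $\pst_\mathrm{fin}$ (Corollary \ref{cor_smallness_for_addn_and_covn_and_nonm} for $\ind \in \{1,2,5\}$, Theorem \ref{thm_uf_limit_keeps_b_small} for $\ind = 3$, and Main Lemma \ref{lem_Main_Lemma} for $\ind = 4$) give $C_{[\theta_\ind]^{<\theta_\ind}} \preceq_T \R_\ind$, so $\bb(\R_\ind) \leq \theta_\ind$; the reverse inequality follows from $\pst_\mathrm{fin} \lessdot \pst_\mathrm{mid}$ and Theorem \ref{thm_p6_GCH}. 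For $\dd(\R_\ind) = \theta_{11-\ind}$: every $\pst_\mathrm{fin}$-name for a response of $\R_\ind$ is, by the bookkeeping condition of Construction \ref{con_P6} applied inside $M_\ind$ via elementarity, dominated by a response appearing at some $S_\ind$-stage $\xi \in M_\ind$; hence $\R_\ind \preceq_T [M_\ind \cap \theta_\cc]^{<\theta_\ind} \cong_T [\theta_{11-\ind}]^{<\theta_\ind}$ by Fact \ref{fac_Tukey_order_equivalence_condition} and Lemma \ref{lem_ccc_rs}, giving $\dd(\R_\ind) \leq \theta_{11-\ind}$, while the lower bound follows from the analogous bookkeeping inclusion already provided by $\pst_\mathrm{mid}$. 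Finally, $\cc = \theta_\cc$ since $|\pst_\mathrm{fin}| \leq |M_\cc| = \theta_\cc$ and $\pst_\mathrm{fin}$ adds $|\theta_\cc \cap M_\cc| = \theta_\cc$ many Cohen reals.

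The main obstacle is verifying $\pst_\mathrm{fin} \lessdot \pst_\mathrm{mid}$ at the non-trivial stages $\xi \in S_4 \cup S_5$, where the iterand $\qd_\xi$ and its (closed-)ultrafilter-linked decomposition are $\p_\xi^-$-names depending on $N_\xi$ and on the ultrafilter names $\dot D^h_\xi, \dot E^{h'}_\xi$. One must check that whenever $\xi \in S_\ind \cap M_\ind$, the portion of $\p_\xi^-$ selected by $\pst_\mathrm{fin}$ is a complete subposet of $\p_\xi^-$, which requires elementarity together with the fact --- built into Construction \ref{con_P6} item (3) --- that $N_\xi$ and the relevant ultrafilter names belong to $M_\ind$. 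Once this is in place, the argument proceeds exactly as in the proof of Cicho\'n's maximum in \cite{GKMS}, and the new ingredients $\ee, \ee^*, \pre, \pre^*$ introduce no further difficulty since their preservation has already been handled by Main Lemma \ref{lem_Main_Lemma} together with the goodness framework of Section \ref{sec_RS_PT}.
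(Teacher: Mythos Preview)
The paper's own proof is just ``See \cite{GKMS}'', so any correct invocation of that submodel method would count as the same approach. You correctly identify the overall strategy and the right references, but the concrete construction you write down is \emph{not} the method of \cite{GKMS} and, as stated, does not work. Your $\pst_{\mathrm{fin}}$ is defined by a \emph{coordinate-wise} restriction keyed to the bookkeeping partition---$p(\xi)\in M_i$ whenever $\xi\in S_i$---whereas the method of \cite{GKMS} (and its cleaner formulation in \cite{forcing_constellations}) sets $\pst_{\mathrm{fin}}=\pst_{\mathrm{mid}}\cap N$ for a \emph{single} carefully built $N\prec H_\chi$. The different $\dd$-values then arise because each Tukey equivalence $\R_i\cong_T[\lambda_6]^{<\lambda_i}$ transfers to $\R_i\cong_T[\lambda_6]^{<\lambda_i}\cap N$, and the internal chain structure of $N$ is arranged so that $\cof\bigl([\lambda_6]^{<\lambda_i}\cap N,\subseteq\bigr)=\theta_{11-i}$ varies with $i$; the partition $S_1,\dots,S_5$ plays no role at this stage.

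Your appeal to Lemma~\ref{lem_D_minus_in_suff} for $\pst_{\mathrm{fin}}\lessdot\pst_{\mathrm{mid}}$ is misplaced: that lemma handles $\p\cap N$ for one $\sigma$-closed submodel $N$, but a maximal antichain of your mixed restriction need not lie in any single $M_i$, so no elementarity argument is available. Your computation of $\dd(\R_i)$ is also off: the restriction you impose does not reduce the \emph{number} of $S_i$-stages (they remain cofinal in $\lambda_6$), and the bookkeeping in Construction~\ref{con_P6}\eqref{item_N}\eqref{item_N_e} concerns the models $N_\xi$ used to define the iterands, not your $M_i$, so ``applying the bookkeeping inside $M_i$'' does not yield a dominating family of size $\theta_{11-i}$. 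Finally, the preservation results you invoke (Corollary~\ref{cor_smallness_for_addn_and_covn_and_nonm}, Theorem~\ref{thm_uf_limit_keeps_b_small}, Main Lemma~\ref{lem_Main_Lemma}) require the poset to carry the full $\kappa$-$\Gamma_{\mathrm{uf}}$-iteration structure with guardrails, and there is no reason your coordinate-wise restriction retains it.
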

		
		\begin{proof}
			See \cite{GKMS}.
		\end{proof}
		\subsection{Controlling $\ee_{ubd}$}
		Toward the proof of Theorem C, where $g$-prediction (Definition \ref{dfn_variant}) is treated in the separation, let us consider $\ee_g$ for $g\in(\omega\setminus2)^\omega$.
		
		\begin{dfn}
			$\R_5^g\coloneq\mathbf{PR}_g$ for $g\in(\omega\setminus2)^\omega$.
		\end{dfn}
		By Corollary \ref{cor_PR_is_sigma_centered_suff}, the poset $\mathbb{PR}_g$, which increases $\ee_g$, is $\sigma$-c-uf-lim-linked.
		Thus, performing an iteration where the $\sigma$-c-uf-lim-linked forcing $\mathbb{E}$ is replaced with $\mathbb{PR}_g$ and where $g$ runs through all $g\in(\omega\setminus2)^\omega$ by bookkeeping, we obtain the following:
		
		\begin{thm}(Corresponding to Theorem \ref{thm_p6_GCH}, the construction of $\pst_\mathrm{mid}$.)
			\label{thm_p_mid_prime_GCH}
			Assume GCH and Assumption \ref{ass_card_arith}.
			Then, there exists a ccc poset $\pst^\prime_\mathrm{mid}$ which forces for each $\ind\in I^+$,
			$\R_\ind\cong_T C_{[\lambda_6]^{<\lambda_\ind}}\cong_T[\lambda_6]^{<\lambda_\ind}$, in particular, $\bb(\R_\ind)=\lambda_\ind$ and $\dd(\R_\ind)=2^{\aleph_0}=\lambda_6$ (the same things also hold for $\R_4^*$ and $\R_5^g$ for $g\in(\omega\setminus2)^\omega$).
		\end{thm}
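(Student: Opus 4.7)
The plan is to follow the two-stage construction of $\pst_\mathrm{mid}$ from Theorem~\ref{thm_p6_GCH} verbatim, refining only the choice of iterand at the non-trivial stages $\xi \in S_5$. First I would go into the $\Ad$-extension (where Assumption~\ref{ass_for_complete_guardrail} is restored by adding $\mu_3$-Cohen reals), carry out the analogue of Construction~\ref{con_P6}, and mirror it back to the ground model through shared $\sigma$-closed submodels $N_\xi$ exactly as before. The single modification is that at stages $\xi \in S_5$, rather than always forcing $\qd_\xi \coloneq \mathbb{E}$, the bookkeeping now selects either $\mathbb{E}$ or $\pr_g$ for some $g \in (\omega \setminus 2)^\omega$ living in $V^{\p_\xi^-}$. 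By Corollary~\ref{cor_PR_is_sigma_centered_suff}, $\pr_g$ is $\sigma$-$(\Lambda(\mathrm{centered}) \cap \Lambda^{\mathrm{lim}}_{\mathrm{cuf}})$-linked, so it slots into the closed-ultrafilter-limit framework identically to $\mathbb{E}$, and since there are at most $|N_\xi|^{\aleph_0} < \lambda_5$ reals in $V^{\p_\xi^-}$ the iterand $\pr_g$ interpreted there still has size $<\lambda_5$. I would arrange the bookkeeping so that, in addition to the usual absorption condition of Construction~\ref{con_P6}(\ref{item_N}), for every $\eta < \gamma$ and every $g \in (\omega\setminus 2)^\omega \cap V^{\p_\eta}$ the stages $\xi \in S_5$ above $\eta$ with $\qd_\xi = \pr_g$ are cofinal in $\lambda_6$.

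Calling the resulting iteration $\pst^\prime_\mathrm{mid}$, the Tukey identifications $\R_i \cong_T C_{[\lambda_6]^{<\lambda_i}}$ for $i \in I$ and $\R_4^* \cong_T C_{[\lambda_6]^{<\lambda_4}}$ follow from the same proof as Theorem~\ref{thm_p6_GCH}, so the only new clause is $\R_5^g \cong_T C_{[\lambda_6]^{<\lambda_5}}$ for every $g \in (\omega \setminus 2)^\omega$. The ``outside'' direction $\R_5^g \lq C_{[\lambda_6]^{<\lambda_5}}$ follows from Fact~\ref{fac_Tukey_order_equivalence_condition}(1) together with the bookkeeping: any $<\lambda_5$-sized $A \subseteq \prod_{n<\omega} g(n)$ is absorbed into some $N_\xi$, after which a $\pr_g$-stage above it generically adds a predictor that predicts every member of $A$ (by a routine density argument using Definition~\ref{dfn_PR}(\ref{item_PR_order_long})). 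The ``inside'' direction $C_{[\lambda_6]^{<\lambda_5}} \lq \R_5^g$ is obtained via goodness in the spirit of Corollary~\ref{cor_smallness_for_addn_and_covn_and_nonm}(3): every iterand of $\pst^\prime_\mathrm{mid}$ has size $<\lambda_5$ (those at $S_i$ for $i \leq 4$ because $\lambda_i < \lambda_5$, those at $S_5$ by the observation above), so each is $\lambda_5$-$\mathbf{PR}_g$-good by the general fact that small posets are good for any Polish relational system, and then a fsi of ccc $\lambda_5$-$\mathbf{PR}_g$-good posets of length $\geq \lambda_5$ forces $C_{[\lambda_6]^{<\lambda_5}} \lq \mathbf{PR}_g$.

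The main obstacle, already highlighted in the remark after Lemma~\ref{lem_Prediction_Point_Exclusion_Lemma}, is that closed-ultrafilter-limits themselves do \emph{not} keep $\ee_g$ small: Main Lemma~\ref{lem_Main_Lemma} is useless for $\R_5^g$, and the ``inside'' Tukey inequality must instead come from the goodness route above. This makes the proof critically dependent on Assumption~\ref{ass_card_arith}(\ref{item_aleph1_inacc}), which supplies $|N_\xi|^{\aleph_0} < \lambda_5$ and thereby keeps $\pr_g$ evaluated in $V^{\p_\xi^-}$ below $\lambda_5$; without this the goodness argument collapses. Once that smallness is verified, the $\Ad$-reflection and submodel-mirroring steps of Theorem~\ref{thm_p6_GCH} carry over unchanged, and the final absoluteness argument for Cohen reals works mutatis mutandis for $\R_5^g$ alongside $\R_3$ and $\R_4$.
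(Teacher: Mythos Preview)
Your proposal is correct and matches the paper's approach: modify the $S_5$-stages to use $\pr_g$ (which is $\sigma$-$(\Lambda(\mathrm{centered})\cap\Lambda^\mathrm{lim}_\mathrm{cuf})$-linked by Corollary~\ref{cor_PR_is_sigma_centered_suff}) with bookkeeping over all $g\in(\omega\setminus2)^\omega$, and obtain the ``inside'' direction for $\R_5^g$ via goodness since every iterand has size $<\lambda_5$. One minor slip in your final sentence: $\R_5^g$ does \emph{not} need the Cohen-real absoluteness argument used for $\R_3$ and $\R_4$; as you correctly argued two paragraphs earlier, it is handled purely by goodness directly in the ground model, alongside $\R_1,\R_2,\R_5$.
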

		
		\begin{thm}(Corresponding to Theorem \ref{thm_p6fin}, the construction of $\pst_\mathrm{fin}$.)
			\label{thm_p_fin_prime_GCH}
			Assume GCH and 
			$\aleph_1\leq\theta_1\leq\cdots\leq\theta_{10}$ are regular and $\theta_\cc$ is infinite cardinal such that $\theta_\cc\geq\theta_{10}$ and $\theta_\cc^{\aleph_0}=\theta_\cc$.
			Then, there exists a ccc poset $\pst_{\mathrm{fin}}$ which forces $\bb(\R_\ind)=\theta_\ind$ and $\dd(\R_\ind)=\theta_{11-\ind}$ for each $\ind\in I^+$ (the same things also hold for $\R_4^*$ and $\R_5^g$ for $g\in(\omega\setminus2)^\omega$) and $\cc=\theta_\cc$ (see Figure \ref{fig_cm_prime}).
		\end{thm}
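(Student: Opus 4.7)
The plan is to mimic, step by step, the argument that proves Theorem \ref{thm_p6fin} in \cite{GKMS}, feeding in $\pst^\prime_\mathrm{mid}$ from Theorem \ref{thm_p_mid_prime_GCH} in place of $\pst_\mathrm{mid}$, and checking at each point that the extra relational systems $\R_5^g$ (for $g\in(\omega\setminus 2)^\omega$) behave as $\R_5$ does. First, I would choose auxiliary cardinals $\lambda_1<\dots<\lambda_6$ satisfying Assumption~\ref{ass_card_arith} that are large enough that the intended target values $\theta_1,\dots,\theta_{10},\theta_\cc$ can be realised as cardinals of a suitable elementary submodel sequence; concretely, pick $\lambda_i$ so that $\lambda_i$ has cofinality $\geq\theta_{11-i}$ and the desired submodel tower of length $\theta_\cc$ can be built inside $H_\Theta^V$.

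Second, I would apply Theorem \ref{thm_p_mid_prime_GCH} in the ground model (which satisfies GCH) to obtain a ccc poset $\pst^\prime_\mathrm{mid}$ forcing
\[
\R_\ind \cong_T C_{[\lambda_6]^{<\lambda_\ind}}\cong_T [\lambda_6]^{<\lambda_\ind},\qquad \ind\in I,
\]
with the same Tukey equivalences valid also for $\R_4^*$ and for every $\R_5^g$ with $g\in(\omega\setminus 2)^\omega$. Using GCH and $\theta_\cc^{\aleph_0}=\theta_\cc$, I would then construct an increasing continuous chain $\langle M_\alpha:\alpha<\theta_\cc\rangle$ of $\sigma$-closed elementary submodels of some $H_\Theta$ containing $\pst^\prime_\mathrm{mid}$, with $|M_\alpha|<\theta_\cc$, whose sizes are controlled so that $|M_\alpha\cap\lambda_i|$ has the correct cofinality profile to realise $\theta_i$ and $\theta_{11-i}$ simultaneously on the two sides of the diagram, exactly as in \cite[§5]{GKMS}. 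Define
\[
\pst_\mathrm{fin}\coloneq \pst^\prime_\mathrm{mid}\cap \bigcup_{\alpha<\theta_\cc} M_\alpha,
\]
which is a complete subposet of $\pst^\prime_\mathrm{mid}$ because each $M_\alpha$ is $\sigma$-closed and $\pst^\prime_\mathrm{mid}$ is ccc, and which is itself ccc.

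Third, for each $\ind\in I$ I would verify two dual statements in $V^{\pst_\mathrm{fin}}$:
\begin{enumerate}
\item (lower bound on $\bb$, upper bound on $\dd$) $\R_\ind\preceq_T [\lambda_6\cap M]^{<\theta_{11-\ind}}$ for an appropriate union of submodels $M$, giving $\bb(\R_\ind)\geq\theta_\ind$ and $\dd(\R_\ind)\leq\theta_{11-\ind}$;
\item (upper bound on $\bb$, lower bound on $\dd$) $C_{[\theta_{11-\ind}]^{<\theta_\ind}}\preceq_T \R_\ind$, giving the reverse inequalities.
\end{enumerate}
For $\ind\in\{1,2,3,5\}$ and for $\R_4^*$ these are the existing arguments from \cite{GKMS} and Theorem \ref{thm_p6_GCH}. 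For the new case $\R_5^g$ with $g\in(\omega\setminus 2)^\omega$, I would use that $\R_5^g=\mathbf{PR}_g$ is a Polish relational system in exactly the same formal sense as $\R_5$, that $\mathbb{PR}_g$ appears cofinally in the bookkeeping of $\pst^\prime_\mathrm{mid}$, and that by Corollary~\ref{cor_PR_is_sigma_centered_suff} the iterands $\mathbb{PR}_g$ are $\sigma$-centered and $\sigma$-c-uf-lim-linked, so the Main Lemma \ref{lem_Main_Lemma} and the submodel-restriction lemmas go through verbatim. The Cicho\'n-diagram ZFC inequalities involving $\ee_{ubd}$ and $\pre_{ubd}$ are then automatic from $\ee_{ubd}=\min_g\ee_g$ and $\pre_{ubd}=\sup_g\pre_g$ together with the values assigned to each $\ee_g,\pre_g$.

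The main obstacle I anticipate is verifying the lower-bound direction $C_{[\theta_{11-\ind}]^{<\theta_\ind}}\preceq_T \R_5^g$ uniformly in $g$: one must show that restricting $\pst^\prime_\mathrm{mid}$ to $\bigcup_\alpha M_\alpha$ does not destroy the witnesses produced by $\mathbb{PR}_g$-iterands that were dealt with outside the submodel tower. This is handled exactly as for $\R_4$ in the bounding-prediction case, by pulling back $\mathbb{PR}_g$-generic predictors through the complete embedding $\pst_\mathrm{fin}\lessdot\pst^\prime_\mathrm{mid}$ and using the $\sigma$-closedness of the $M_\alpha$ to keep countable objects in the submodel; since $\mathbb{PR}_g$ is $\sigma$-centered (hence $\ccc$) this absoluteness argument is the standard one from \cite{GKMS}. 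Once both directions are established for every $\ind\in I$ (and for $\R_4^*$ and every $\R_5^g$), the inequality $\cc=\theta_\cc$ follows from $|\pst_\mathrm{fin}|=\theta_\cc$ and the bookkeeping, giving the constellation of Figure~\ref{fig_thmC}.
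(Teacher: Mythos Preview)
The paper gives no proof of this theorem: the statement is tagged ``Corresponding to Theorem~\ref{thm_p6fin}'', and Theorem~\ref{thm_p6fin} itself is proved by the single line ``See \cite{GKMS}.'' So the paper's intended argument is simply: apply the submodel method of \cite{GKMS} to $\pst^\prime_\mathrm{mid}$ from Theorem~\ref{thm_p_mid_prime_GCH} instead of to $\pst_\mathrm{mid}$. Your overall plan---feed $\pst^\prime_\mathrm{mid}$ into the \cite{GKMS} machinery and note that each $\R_5^g$ is a Prs handled exactly like $\R_5$---is therefore the same approach, and the observation that the $\R_5^g$'s require no new work is correct.

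However, your description of the submodel method itself is off in a way that would not produce the result. In \cite{GKMS} one does \emph{not} build an increasing continuous tower $\langle M_\alpha:\alpha<\theta_\cc\rangle$ and take $\pst^\prime_\mathrm{mid}\cap\bigcup_\alpha M_\alpha$; that union would have size $\theta_\cc$ and would not separate the right-hand side at all. The actual method uses a \emph{finite nested decreasing} sequence of $\sigma$-closed elementary submodels $N_\cc\supseteq N_{10}\supseteq N_9\supseteq\cdots$ (one for each value to be realised on the right), with $|N_j|$ chosen according to $\theta_j$, and $\pst_\mathrm{fin}$ is $\pst^\prime_\mathrm{mid}\cap N$ for the \emph{smallest} model $N$ in the chain. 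Each successive intersection pushes one $\dd$-number down while the Tukey equivalences $\R_\ind\cong_T[\lambda_6]^{<\lambda_\ind}$ (transported into the submodel) guarantee the $\bb$-numbers stay put. Your remark ``pick $\lambda_i$ so that $\lambda_i$ has cofinality $\geq\theta_{11-i}$'' is also not meaningful, since the $\lambda_i$ are regular by Assumption~\ref{ass_card_arith}; what is needed is rather that the $\lambda_i$ are large enough that models of the required sizes $\theta_j$ can be found between them. Once you replace the tower-and-union construction by the correct finite-nested-intersection one, the rest of your outline (including the treatment of $\R_5^g$) goes through and matches the paper's intent.
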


		\begin{figure}
			\centering
			\begin{tikzpicture}
				\tikzset{
					textnode/.style={text=black}, 
				}
				\tikzset{
					edge/.style={color=black, thin, opacity=0.5}, 
				}
				\newcommand{\w}{2.4}
				\newcommand{\h}{1.5}
				
				\node[textnode] (addN) at (0,  0) {$\addn$};
				\node (t1) [fill=lime, draw, text=black, circle,inner sep=1.0pt] at (-0.2*\w, 0.7*\h) {$\theta_1$};
				
				\node[textnode] (covN) at (0,  \h*3) {$\covn$};
				\node (t2) [fill=lime, draw, text=black, circle,inner sep=1.0pt] at (0.18*\w, 2.5*\h) {$\theta_2$};

				\node[textnode] (addM) at (\w,  0) {$\cdot$};
				\node[textnode] (b) at (\w,  1*\h) {$\bb$};
				\node (t3) [fill=lime, draw, text=black, circle,inner sep=1.0pt] at (0.8*\w, 0.8*\h) {$\theta_3$};
				
				\node[textnode] (nonM) at (\w,  \h*3) {$\nonm$};
				\node (t5) [fill=lime, draw, text=black, circle,inner sep=1.0pt] at (1.35*\w, 3.35*\h) {$\theta_5$};
				
				\node[textnode] (covM) at (\w*2,  0) {$\covm$};
				\node (t6) [fill=lime, draw, text=black, circle,inner sep=1.0pt] at (1.65*\w, -0.4*\h) {$\theta_6$};
				
				\node[textnode] (d) at (\w*2,  2*\h) {$\dd$};
				\node (t8) [fill=lime, draw, text=black, circle,inner sep=1.0pt] at (2.2*\w, 2.2*\h) {$\theta_8$};
				
				\node[textnode] (cofM) at (\w*2,  \h*3) {$\cdot$};

				\node[textnode] (nonN) at (\w*3,  0) {$\nonn$};
				\node (t9) [fill=lime, draw, text=black, circle,inner sep=1.0pt] at (2.82*\w, 0.5*\h) {$\theta_9$};
				
				\node[textnode] (cofN) at (\w*3,  \h*3) {$\cofn$};
				\node (t10) [fill=lime, draw, text=black, circle,inner sep=1.0pt] at (3.2*\w, 2.3*\h) {$\theta_{10}$};
				
				\node[textnode] (aleph1) at (-\w,  0) {$\aleph_1$};
				\node[textnode] (c) at (\w*4,  \h*3) {$2^{\aleph_0}$};
				\node (tcc) [fill=lime, draw, text=black, circle,inner sep=1.0pt] at (3.65*\w, 3.5*\h) {$\theta_\cc$};
				
				\node[textnode] (e) at (0.5*\w,  2*\h) {$\mathfrak{e}$}; 
				\node[textnode] (estar) at (\w,  2*\h) {$\ee^*$};
				\node (t4) [fill=lime, draw, text=black, circle,inner sep=1.0pt] at (1.18*\w, 2.35*\h) {$\theta_4$};
				
				\node[textnode] (pr) at (2.5*\w,  1*\h) {$\mathfrak{pr}$}; 
				\node[textnode] (prstar) at (\w*2,  1*\h) {$\mathfrak{pr}^*$};
				\node (t7) [fill=lime, draw, text=black, circle,inner sep=1.0pt] at (1.82*\w, 0.65*\h) {$\theta_7$};

				\draw[->, edge] (addN) to (covN);
				\draw[->, edge] (addN) to (addM);
				\draw[->, edge] (covN) to (nonM);	
				\draw[->, edge] (addM) to (b);
				
				\draw[->, edge] (addM) to (covM);
				\draw[->, edge] (nonM) to (cofM);
				
				\draw[->, edge] (d) to (cofM);
				\draw[->, edge] (b) to (prstar);
				\draw[->, edge] (covM) to (nonN);
				\draw[->, edge] (cofM) to (cofN);
				\draw[->, edge] (nonN) to (cofN);
				\draw[->, edge] (aleph1) to (addN);
				\draw[->, edge] (cofN) to (c);

				\draw[->, edge] (e) to (covM);
				\draw[->, edge] (addN) to (e);

				\draw[->, edge] (covM) to (prstar);
				\draw[->, edge] (nonM) to (pr);
				\draw[->, edge] (pr) to (cofN);
				
				\draw[->, edge] (e) to (estar);
				\draw[->, edge] (b) to (estar);
				\draw[->, edge] (estar) to (nonM);
				\draw[->, edge] (estar) to (d);
				
				\draw[->, edge] (prstar) to (d);
				\draw[->, edge] (prstar) to (pr);
				
				\draw[->, edge] (prstar) to (pr);

				\draw[blue,thick] (1.5*\w,0.4*\h)--(2.65*\w,0.4*\h);
				\draw[blue,thick] (2.65*\w,0.4*\h)--(2.65*\w,1.5*\h);
				\draw[blue,thick] (-0.5*\w,1.5*\h)--(3.5*\w,1.5*\h);
				\draw[blue,thick] (0.35*\w,2.6*\h)--(1.5*\w,2.6*\h);
				\draw[blue,thick] (0.35*\w,1.5*\h)--(0.35*\w,2.6*\h);

				\draw[blue,thick] (1.5*\w,-0.4*\h)--(1.5*\w,3.5*\h);
				\draw[blue,thick] (0.5*\w,2.6*\h)--(0.5*\w,3.5*\h);
				\draw[blue,thick] (2.5*\w,1.5*\h)--(2.5*\w,3.5*\h);

				\draw[blue,thick] (0.5*\w,-0.4*\h)--(0.5*\w,1.5*\h);
				\draw[blue,thick] (2.5*\w,-0.4*\h)--(2.5*\w,0.4*\h);
				\draw[blue,thick] (-0.5*\w,-0.4*\h)--(-0.5*\w,3.5*\h);
				\draw[blue,thick] (3.5*\w,-0.4*\h)--(3.5*\w,3.5*\h);

			\end{tikzpicture}
			\caption{Constellation of $\pst_\mathrm{fin}$.}\label{fig_right_of_CMe}
		\end{figure}
		
		\begin{figure}
			\centering
			\begin{tikzpicture}
				\tikzset{
					textnode/.style={text=black}, 
				}
				\tikzset{
					edge/.style={color=black, thin, opacity=0.5}, 
				}
				\newcommand{\w}{2.4}
				\newcommand{\h}{2.0}
				
				\node[textnode] (addN) at (0,  0) {$\addn$};
				\node (t1) [fill=lime, draw, text=black, circle,inner sep=1.0pt] at (-0.2*\w, 0.8*\h) {$\theta_1$};
				
				\node[textnode] (covN) at (0,  \h*3) {$\covn$};
				\node (t2) [fill=lime, draw, text=black, circle,inner sep=1.0pt] at (0.15*\w, 2.45*\h) {$\theta_2$};

				\node[textnode] (addM) at (\w,  0) {$\cdot$};
				\node[textnode] (b) at (\w,  1.3*\h) {$\bb$};
				\node (t3) [fill=lime, draw, text=black, circle,inner sep=1.0pt] at (0.68*\w, 1.2*\h) {$\theta_3$};
				
				\node[textnode] (nonM) at (\w,  \h*3) {$\nonm$};
				\node (t5) [fill=lime, draw, text=black, circle,inner sep=1.0pt] at (1.0*\w, 3.3*\h) {$\theta_5$};
				
				\node[textnode] (covM) at (\w*2,  0) {$\covm$};
				\node (t6) [fill=lime, draw, text=black, circle,inner sep=1.0pt] at (2.0*\w, -0.3*\h) {$\theta_6$};
				
				\node[textnode] (d) at (\w*2,  1.7*\h) {$\dd$};
				\node (t8) [fill=lime, draw, text=black, circle,inner sep=1.0pt] at (2.32*\w, 1.8*\h) {$\theta_{8}$};
				\node[textnode] (cofM) at (\w*2,  \h*3) {$\cdot$};

				\node[textnode] (nonN) at (\w*3,  0) {$\nonn$};
				\node (t9) [fill=lime, draw, text=black, circle,inner sep=1.0pt] at (2.85*\w, 0.55*\h) {$\theta_{9}$};
				
				\node[textnode] (cofN) at (\w*3,  \h*3) {$\cofn$};
				\node (t10) [fill=lime, draw, text=black, circle,inner sep=1.0pt] at (3.25*\w, 2.2*\h) {$\theta_{10}$};
				
				\node[textnode] (aleph1) at (-\w,  0) {$\aleph_1$};
				\node[textnode] (c) at (\w*4,  \h*3) {$2^{\aleph_0}$};
				\node (t10) [fill=lime, draw, text=black, circle,inner sep=1.0pt] at (3.67*\w, 3.4*\h) {$\theta_\cc$};
				
				\node[textnode] (e) at (0.5*\w,  1.7*\h) {$\mathfrak{e}$}; 
				\node[textnode] (estar) at (\w,  1.7*\h) {$\ee^*$};
				\node (t4) [fill=lime, draw, text=black, circle,inner sep=1.0pt] at (0.75*\w, 1.9*\h) {$\theta_4$};

				\node[textnode] (pr) at (2.5*\w,  1.3*\h) {$\mathfrak{pr}$}; 
				\node[textnode] (prstar) at (\w*2,  1.3*\h) {$\mathfrak{pr}^*$};
				\node (t9) [fill=lime, draw, text=black, circle,inner sep=1.0pt] at (2.25*\w, 1.1*\h) {$\theta_7$};
				
				\node[textnode] (eubd) at (\w*0.75,  \h*2.45) {$\ee_{ubd}$};
				
				\node[textnode] (prubd) at (\w*2.25,  \h*0.55) {$\mathfrak{pr}_{ubd}$};

				\draw[->, edge] (addN) to (covN);
				\draw[->, edge] (addN) to (addM);
				\draw[->, edge] (covN) to (nonM);	
				\draw[->, edge] (addM) to (b);
				\draw[->, edge] (addM) to (covM);
				\draw[->, edge] (nonM) to (cofM);
				\draw[->, edge] (d) to (cofM);
				\draw[->, edge] (b) to (prstar);
				\draw[->, edge] (covM) to (nonN);
				\draw[->, edge] (cofM) to (cofN);
				\draw[->, edge] (nonN) to (cofN);
				\draw[->, edge] (aleph1) to (addN);
				\draw[->, edge] (cofN) to (c);
				
				\draw[->, edge] (e) to (covM);
				\draw[->, edge] (addN) to (e);
				
				\draw[->, edge] (covM) to (prstar);
				\draw[->, edge] (covM) to (prubd);
				\draw[->, edge] (nonM) to (pr);
				\draw[->, edge] (covN) to (prubd);
				\draw[->, edge] (pr) to (cofN);
				
				\draw[->, edge] (e) to (estar);
				\draw[->, edge] (b) to (estar);C
				\draw[->, edge] (estar) to (nonM);
				\draw[->, edge] (eubd) to (nonN);
				\draw[->, edge] (estar) to (d);
				\draw[->, edge] (e) to (eubd);
				
				\draw[->, edge] (prstar) to (d);
				\draw[->, edge] (prstar) to (pr);
				
				\draw[->, edge] (prstar) to (pr);
				\draw[->, edge] (prubd) to (pr);
				\draw[->, edge] (eubd) to (nonM);
				%
				%
				
				\draw[blue,thick] (-0.5*\w,1.5*\h)--(3.5*\w,1.5*\h);
				\draw[blue,thick] (1.5*\w,-0.5*\h)--(1.5*\w,3.5*\h);
				
				\draw[blue,thick] (-0.5*\w,-0.5*\h)--(-0.5*\w,3.5*\h);
				\draw[blue,thick] (3.5*\w,-0.5*\h)--(3.5*\w,3.5*\h);
				
				\draw[blue,thick] (0.5*\w,-0.5*\h)--(0.5*\w,1.5*\h);
				\draw[blue,thick] (2.5*\w,1.5*\h)--(2.5*\w,3.5*\h);
				
				\draw[blue,thick] (0.3*\w,2.25*\h)--(1.5*\w,2.25*\h);
				\draw[blue,thick] (0.3*\w,1.5*\h)--(0.3*\w,2.25*\h);
				\draw[blue,thick] (0.5*\w,2.25*\h)--(0.5*\w,3.5*\h);
				
				\draw[blue,thick] (2.7*\w,0.8*\h)--(1.5*\w,0.8*\h);
				\draw[blue,thick] (2.7*\w,1.5*\h)--(2.7*\w,0.8*\h);
				\draw[blue,thick] (2.5*\w,0.8*\h)--(2.5*\w,-0.5*\h);

			\end{tikzpicture}
			\caption{Constellation of $\p^\prime_\mathrm{fin}$.}\label{fig_cm_prime}
		\end{figure}

		\section{Questions} \label{sec_question}
		\begin{que}
			Are there other cardinal invariants which are not below $\eeb$ and kept small through forcings with closed-ultrafilter-limits?
		\end{que}
		In the left side of Cicho\'n's diagram, many cardinal invariants are either below $\eeb$ or above $\ee_{ubd}$ (hence closed-ultrafilter-limits do not keep them small) and a remaining candidate is 
		$\covn$. However, not only it is unclear whether c-uf-limits keep it small, but also even if they did, it would be unclear whether there would be an application since most of the known forcings with c-uf-limits are either $\sigma$-centered or sub-random, which keep $\covn$ small without resorting to c-uf-limits.
		
		\begin{que}
			Can the closedness argument in Main Lemma \ref{lem_Main_Lemma} be generalized to some fact such as ``closed-Fr-Knaster posets preserve strongly $\kappa$-$\mathbf{PR}^*$-unbounded families from the ground model'' described in \cite{BCM_filter_linkedness}?
		\end{que}
		
		The fact that ultrafilter limits keep $\bb$ small (Theorem \ref{thm_uf_limit_keeps_b_small}) is generalized to the fact ``For regular uncountable $\kappa$, $\kappa$-Fr-Knaster posets preserve strongly $\kappa$-$\mathbf{D}$-unbounded families from the ground model'' (\cite[Theorem 3.12.]{BCM_filter_linkedness}),
		where:
		\begin{itemize}
			\item Fr denotes the following linkedness notion: ``$Q\subseteq\p$ is Fr-linked if there is $\lim\colon Q^\omega\to\p$ such that $\lim\langle q_m\rangle_{m<\omega}\Vdash_\p |\{m<\omega:q_m\in\dot{G}\}|=\omega$ for any $\langle q_m\rangle_{m<\omega}\in Q^\omega$'',
			\item a $\kappa$-Fr-Knaster poset is a poset such that any family of conditions of size $\kappa$ has a Fr-linked subfamily of size $\kappa$, and
			\item a strongly $\kappa$-$\mathbf{D}$-unbounded family is a family of size $\geq\kappa$ such that any real can only dominate $<\kappa$-many reals in the family.
		\end{itemize}
		We are naturally interested in the possibility of this kind of generalization for closed-ultrafilter-limits. However, the proof of Main Lemma \ref{lem_Main_Lemma} basically depends on the freedom to arrange the initial segment of Cohen reals, and it seems to be hard to reflect the freedom to the reals in the ground model.

		\begin{que}
			In addition to Theorem \ref{thm_p_fin_prime_GCH} (Figure \ref{fig_cm_prime}), can we separate $\ee$ and $\eeb$ (and the dual numbers $\pre$ and $\preb$)?
		\end{que}
		
		In fact, even the consistency of $\max\{\ee,\bb\}<\eeb$ is not known ($\ee<\eeb$ and $\bb<\eeb$ are known to be consistent: Brendle \cite{Bre95} proved the consistency of $\ee<\bb(\leq\eeb)$, while he and Shelah \cite{BS_E_and_P_2} proved that of $\bb<\ee(\leq\eeb)$. Also, the latter is obtained as a corollary of Theorem \ref{thm_p6fin}.).
		We can naively define a poset $\pr^*$ which generically adds a bounding-predictor and hence increase $\eeb$, by changing the Definition \ref{dfn_PR} \eqref{item_PR_order_long} to ``For all $ n\in(d^\prime)^{-1}(\{1\})\setminus d^{-1}(\{1\})$ and $f\in F$, we have $f\on n\in\dom(\pi^\prime_n)$ and $\pi^\prime_n(f\on n)=f(n)$''. 
		We can also show that $\pr^*$ has ultrafilter-limits by a similar proof to that of Lemma \ref{lem_pr_is_uflinked} and hence it keeps $\bb$ small. However, it is unclear whether it also keeps $\ee$ small.

		\begin{que}
			\label{que_last}
			Can we additionally separate $\ee_{ubd}$ and $\nonm$ (and the dual numbers $\pre_{ubd}$ and $\covm$)?
		\end{que}
		
		This question has a deep background.
		
		After the first construction of Cicho\'n's maximum in \cite{GKS}, Kellner, Shelah and T{\u{a}}nasie \cite{KST} constructed\footnote{It was constructed under the same large cardinal assumption as \cite{GKS}. Later, the assumption was eliminated in \cite{GKMS} introducing the submodel method.} Cicho\'n's maximum for another order 
		illustrated in Figure \ref{fig_CM_KST}, introducing the \textit{FAM-limit}\footnote{While they did not use the name ``FAM-limit'' but ``strong FAM-limit for intervals'', we use ``FAM-limit'' in this paper. 
			Also, the original idea of the notion is from \cite{She00}.} method, which focuses on (and actually is short for) finitely additive measures on $\omega$ and keeps the bounding number $\bb$ small as the ultrafilter-limit does.
		\begin{figure}
			\centering
			\begin{tikzpicture}
				\tikzset{
					textnode/.style={text=black}, 
				}
				\tikzset{
					edge/.style={color=black, thin}, 
				}
				\newcommand{\w}{2.4}
				\newcommand{\h}{1.5}
				
				\node[textnode] (addN) at (0,  0) {$\addn$};
				\node (t1) [fill=lime, draw, text=black, circle,inner sep=1.0pt] at (-0.25*\w, 0.5*\h) {$\theta_1$};
				
				\node[textnode] (covN) at (0,  2*\h) {$\covn$};
				\node (t3) [fill=lime, draw, text=black, circle,inner sep=1.0pt] at (0.25*\w, 1.5*\h) {$\theta_3$};

				\node[textnode] (addM) at (\w,  0) {$\cdot$};
				\node[textnode] (b) at (\w,  1*\h) {$\bb$};
				\node (t2) [fill=lime, draw, text=black, circle,inner sep=1.0pt] at (0.75*\w, 1.2*\h) {$\theta_2$};
				
				\node[textnode] (nonM) at (\w,  2*\h) {$\nonm$};
				\node (t4) [fill=lime, draw, text=black, circle,inner sep=1.0pt] at (1.25*\w, 2.4*\h) {$\theta_4$};
				
				\node[textnode] (covM) at (\w*2,  0) {$\covm$};
				\node (t5) [fill=lime, draw, text=black, circle,inner sep=1.0pt] at (1.75*\w, -0.4*\h) {$\theta_5$};
				
				\node[textnode] (d) at (\w*2,  1*\h) {$\dd$};
				\node (t7) [fill=lime, draw, text=black, circle,inner sep=1.0pt] at (2.25*\w, 0.8*\h) {$\theta_7$};
				
				\node[textnode] (cofM) at (\w*2,  2*\h) {$\cdot$};

				\node[textnode] (nonN) at (\w*3,  0) {$\nonn$};
				\node (t6) [fill=lime, draw, text=black, circle,inner sep=1.0pt] at (2.75*\w, 0.5*\h) {$\theta_6$};
				
				\node[textnode] (cofN) at (\w*3,  2*\h) {$\cofn$};
				\node (t8) [fill=lime, draw, text=black, circle,inner sep=1.0pt] at (3.25*\w, 1.5*\h) {$\theta_{8}$};
				
				\node[textnode] (aleph1) at (-\w,  0) {$\aleph_1$};
				\node[textnode] (c) at (\w*4,  2*\h) {$2^{\aleph_0}$};
				\node (tcc) [fill=lime, draw, text=black, circle,inner sep=1.0pt] at (3.7*\w, 2.5*\h) {$\theta_\cc$};

				\draw[->, edge] (addN) to (covN);
				\draw[->, edge] (addN) to (addM);
				\draw[->, edge] (covN) to (nonM);	
				\draw[->, edge] (addM) to (b);
				
				\draw[->, edge] (addM) to (covM);
				\draw[->, edge] (nonM) to (cofM);
				
				\draw[->, edge] (d) to (cofM);
				\draw[->, edge] (b) to (d);
				\draw[->, edge] (b) to (nonM);
				\draw[->, edge] (covM) to (nonN);
				\draw[->, edge] (covM) to (d);
				\draw[->, edge] (cofM) to (cofN);
				\draw[->, edge] (nonN) to (cofN);
				\draw[->, edge] (aleph1) to (addN);
				\draw[->, edge] (cofN) to (c);

				\draw[blue,thick] (-0.5*\w,1*\h)--(0.5*\w,1*\h);
				\draw[blue,thick] (2.5*\w,1*\h)--(3.5*\w,1*\h);

				\draw[blue,thick] (0.5*\w,1.5*\h)--(1.5*\w,1.5*\h);
				\draw[blue,thick] (1.5*\w,0.5*\h)--(2.5*\w,0.5*\h);

				\draw[blue,thick] (1.5*\w,-0.5*\h)--(1.5*\w,2.5*\h);
				\draw[blue,thick] (2.5*\w,-0.5*\h)--(2.5*\w,2.5*\h);

				\draw[blue,thick] (0.5*\w,-0.5*\h)--(0.5*\w,2.5*\h);
				\draw[blue,thick] (-0.5*\w,-0.5*\h)--(-0.5*\w,2.5*\h);
				\draw[blue,thick] (3.5*\w,-0.5*\h)--(3.5*\w,2.5*\h);

			\end{tikzpicture}
			\caption{Cicho\'n's maximum constructed in \cite{KST}. Compared with the original one in \cite{GKS} (Figure \ref{fig_CM_GKS}), $\bb$ and $\covn$, $\dd$ and $\nonn$ are exchanged.} \label{fig_CM_KST}
		\end{figure}
		
		Later, Goldstern, Kellner, Mej\'{\i}a and Shelah \cite{GKMS_CM_and_evasion} proved that the FAM-limit keeps the evasion number $\ee$ small.
		Recently, Uribe-Zapata \cite{Uri} formalized the theory of the FAM-limits and he, Cardona and Mej\'{\i}a generalized the result above as follows:
		
		\begin{thmm}(\cite{Car23RIMS})
			FAM-limits keep $\non(\mathcal{E})$ small.
		\end{thmm}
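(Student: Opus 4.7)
The plan is to parallel Main Lemma \ref{lem_Main_Lemma}, substituting finitely-additive-measure (FAM) limits for closed-ultrafilter-limits and the covering relation $C_{\mathcal{E}}=\langle 2^\omega,\mathcal{E},\in\rangle$, whose unbounding number is $\non(\mathcal{E})$, for $\mathbf{BPR}$. I would first transcribe the content of Section~\ref{sec_uflimit}: for a FAM $\Xi$ on $\omega$ extending the Fr\'echet filter, declare $Q\subseteq\p$ to be $\Xi$-FAM-lim-linked if there is $\lim^\Xi\colon Q^\omega\to\p$ and a $\p$-name $\dot\Xi'$ of a FAM extending $\Xi$ such that $\lim^\Xi\bar q\Vdash \dot\Xi'(\{m:q_m\in\dot G\})=1$. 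The corresponding FAM-iteration, uniform $\Delta$-system and guardrail machinery carry over verbatim, and the analog of Lemma~\ref{lem_UF-limit_Principle} gives $\lim^h\bar p\Vdash \dot\Xi^h_\gamma(\{m:p_m\in\dot G_\gamma\})=1$. Choose a complete set of guardrails $H$ and name sequences $\langle \dot\Xi^h_\xi:\xi\leq\gamma,h\in H\rangle$ along the lines of Construction~\ref{con_P6}, with the extra stipulation that each $\dot\Xi^h_\gamma$ is forced to satisfy the \emph{Cohen distribution property}: for every Borel $B\subseteq 2^\omega$ coded in the ground model, $\dot\Xi^h_\gamma(\{m:\dot c_m\in B\})=\mathbf{Lb}(B)$, where $\langle\dot c_m:m<\omega\rangle$ is the sequence of Cohen reals attached to the distinguishing coordinates.

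With the framework in place, the core combinatorial step is a Measure Exclusion Lemma, the analog of Lemma~\ref{lem_Prediction_Point_Exclusion_Lemma}: if $\dot E$ is a $\p_\gamma$-name for a closed null set and $\bar p=\langle p_m:m<\omega\rangle$ is an $h$-uniform $\Delta$-system such that each $p_m$ forces the attached Cohen witness $\dot c_m$ to lie in $\dot E$, then $\lim^h\bar p$ forces $\dot\Xi^h_\gamma(\{m:\dot c_m\in\dot E\})=1$. Since $\dot E$ is closed and null, fix names $\dot U_n\supseteq \dot E$ for clopen sets of Lebesgue measure $<2^{-n}$; combining the lemma with the Cohen distribution property yields, below $\lim^h\bar p$ and for every $n$,
\begin{equation*}
1=\dot\Xi^h_\gamma(\{m:\dot c_m\in\dot E\})\leq \dot\Xi^h_\gamma(\{m:\dot c_m\in\dot U_n\})=\mathbf{Lb}(\dot U_n)<2^{-n},
\end{equation*}
which is an immediate contradiction for $n\geq 1$; note that here a single limit condition already suffices, in contrast to the delicate cascading of limits needed in Main Lemma~\ref{lem_Main_Lemma}.

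Granted the two ingredients above, the main argument copies Main Lemma~\ref{lem_Main_Lemma} almost verbatim. Assuming toward contradiction that some $p\in\p_\gamma$ forces $\dot E\in\mathcal{E}$ to contain $\dot c_\beta$ for cofinally many $\beta<\kappa$, for each $i<\kappa$ pick $p_i\leq p$ and $\beta_i\in[i,\kappa)$ with $p_i\Vdash \dot c_{\beta_i}\in\dot E$, and thin via Lemma~\ref{lem_uniform_Delta_System_Lemma} to a uniform $\Delta$-system following a common $h\in H$, with $\beta_i$ outside the root and each $p_i(\beta_i)$ equal to a common Cohen stem. The first $\omega$ of these conditions satisfy the hypothesis of the Measure Exclusion Lemma and so $\lim^h\bar p$ forces the displayed contradiction. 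This shows that the first $\kappa$ Cohen reals witness the condition of Fact~\ref{fac_Tukey_order_equivalence_condition}\eqref{item_small_equiv}, yielding $C_{[\gamma]^{<\kappa}}\preceq_T C_{\mathcal{E}}$ and hence $\non(\mathcal{E})\leq\kappa$. The main obstacle, and what distinguishes this result from the $\bb$- and $\ee$-preservation theorems, is the construction of the FAMs with the Cohen distribution property: arranging it requires a FAM-amalgamation lemma replacing Lemma~\ref{lem_UF_upward_directed} at successor steps, and a $\sigma$-FAM-linked (centered-style) hypothesis on iterands at limit steps in order to preserve countable coherence, analogous to Lemma~\ref{lem_uf_const_all}. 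Once this machinery, originating in \cite{She00} and formalised by Uribe-Zapata, is in place, the exclusion step and the $\Delta$-system bookkeeping are straightforward adaptations of the closed-uf-limit framework developed above.
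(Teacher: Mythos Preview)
The paper does not contain a proof of this theorem. It is stated in Section~\ref{sec_question} purely as background for Question~\ref{que_last}, attributed to Cardona, Mej\'{\i}a and Uribe-Zapata~\cite{Car23RIMS}, and no argument is given. So there is no ``paper's own proof'' to compare against; what you have written is a plausible reconstruction of how the cited result might go, not a reproof of something in the present paper.

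On the content of your sketch: the overall shape is reasonable and in the spirit of the FAM-limit machinery of \cite{She00,KST,Uri}, but two points deserve care. First, $\mathcal{E}$ is the $\sigma$-ideal \emph{generated} by closed null sets, so a name $\dot E\in\mathcal{E}$ need not itself name a closed null set; you should insert the routine reduction (via pigeonhole on a countable cover by closed null sets) before invoking your Measure Exclusion Lemma. Second, and more seriously, your ``Cohen distribution property'' is stated for Borel sets $B$ coded in the ground model, yet in the displayed contradiction you apply it to the \emph{name} $\dot U_n$; for the argument to go through you need the distribution property relative to the stage at which $\dot U_n$ is decided, i.e.\ a version that is maintained along the iteration for clopen sets coded at earlier stages. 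Arranging this is exactly the non-trivial amalgamation and preservation work you allude to at the end, and it is the real substance of the theorem rather than an afterthought; the analog of Lemma~\ref{lem_UF_upward_directed} for FAMs with prescribed values on designated sets is considerably more delicate than the ultrafilter case.
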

		\begin{figure}
			\centering
			\begin{tikzpicture}
				\tikzset{
					textnode/.style={text=black}, 
				}
				\tikzset{
					edge/.style={color=black, thin}, 
				}
				\tikzset{cross/.style={preaction={-,draw=white,line width=7pt}}}
				\newcommand{\w}{2.4}
				\newcommand{\h}{2.0}
				
				\node[textnode] (addN) at (0,  0) {$\addn$};
				\node[textnode] (covN) at (0,  \h*3) {$\covn$};

				\node[textnode] (addM) at (\w,  0) {$\addm$};
				\node[textnode] (addE) at (\w,  -0.5*\h) {$\add(\mathcal{E})$};
				
				\node[textnode] (b) at (\w,  1.3*\h) {$\bb$};
				\node[textnode] (nonM) at (\w,  \h*3) {$\nonm$};
				
				\node[textnode] (covM) at (\w*2,  0) {$\covm$};
				\node[textnode] (d) at (\w*2,  1.7*\h) {$\dd$};
				\node[textnode] (cofM) at (\w*2,  \h*3) {$\cofm$};
				\node[textnode] (cofE) at (\w*2,  3.5*\h) {$\cof(\mathcal{E})$};

				\node[textnode] (nonN) at (\w*3,  0) {$\nonn$};
				\node[textnode] (cofN) at (\w*3,  \h*3) {$\cofn$};
				
				\node[textnode] (aleph1) at (-\w,  0) {$\aleph_1$};
				\node[textnode] (c) at (\w*4,  \h*3) {$2^{\aleph_0}$};
				
				\node[textnode] (e) at (0.5*\w,  1.7*\h) {$\mathfrak{e}$}; 
				\node[textnode] (pr) at (2.5*\w,  1.3*\h) {$\mathfrak{pr}$}; 
				
				\node[textnode] (estar) at (\w,  1.7*\h) {$\ee^*$};
				\node[textnode] (prstar) at (\w*2,  1.3*\h) {$\mathfrak{pr}^*$};

				\node[textnode] (eubd) at (\w*0.2,  \h*2.1) {$\ee_{ubd}$};
				\node[textnode] (prubd) at (\w*2.8,  \h*0.9) {$\mathfrak{pr}_{ubd}$};
				
				\node[textnode] (nonE) at (\w*0.75,  \h*2.55) {$\non(\mathcal{E})$};
				\node[textnode] (covE) at (\w*2.25,  \h*0.45) {$\cov(\mathcal{E})$};

				\draw[->, edge] (addM) to (nonE);
				\draw[->, edge] (covE) to (cofM);		
				\draw[->, edge] (covM) to (covE);		
				\draw[->, edge] (e) to (estar);
				\draw[->, edge] (e) to (covM);
				\draw[->, edge] (addM) to (b);
				\draw[->, edge] (e) to (estar);
				\draw[->, edge] (b) to (prstar);
				\draw[->, edge] (nonM) to (pr);
				\draw[->, edge] (prstar) to (pr);
				
				\draw[->, edge] (addN) to (covN);
				\draw[->, edge] (addN) to (addM);
				\draw[->, edge] (covN) to (nonM);	
				
				\draw[->, edge] (addM) to (covM);
				\draw[->, edge] (nonM) to (cofM);
				\draw[->, edge] (d) to (cofM);
				
				\draw[->, edge] (covM) to (nonN);
				\draw[->, edge] (cofM) to (cofN);
				\draw[->, edge] (nonN) to (cofN);
				\draw[->, edge] (aleph1) to (addN);
				\draw[->, edge] (cofN) to (c);
				
				
				\draw[->, edge] (addN) to (e);
				
				\draw[->, edge] (covM) to (prstar);
				
				\draw[->, edge] (pr) to (cofN);

				\draw[->, edge] (b) to (estar);C
				\draw[->, edge] (estar) to (nonM);
				\draw[->, edge] (estar) to (d);
				\draw[->, edge] (e) to (eubd);
				
				\draw[->, edge] (prstar) to (d);
				\draw[->, edge] (prstar) to (pr);

				\draw[->, edge] (prubd) to (pr);

				\draw[->, edge,cross] (eubd) to (nonE);
				\draw[->, edge] (nonE) to (nonM);
				
				\draw[->, edge] (covE) to (prubd);
				\draw[->, edge] (nonE) to (nonN);
				\draw[->, edge] (covN) to (covE);

				\draw[double distance=2pt, edge] (addM) to (addE);
				\draw[double distance=2pt, edge] (cofM) to (cofE);

			\end{tikzpicture}
			\caption{Cicho\'n's diagram and other cardinal invariants.}\label{fig_Cd_E}
		\end{figure}
		$\mathcal{E}$ denotes the $\sigma$-ideal generated by closed null sets and the four numbers related to the ideal $\mathcal{E}$ have the relationship illustrated in Figure \ref{fig_Cd_E} (Bartoszynski and Shelah \cite[Theorem 3.1]{BS92} proved $\add(\mathcal{E})=\addm$ and $\cof(\mathcal{E})=\cofm$. The other new arrows are obtained by easy observations: e.g., a $g$-predictor predicts only $F_\sigma$ null many reals and hence $\ee_{ubd}\leq\non(\mathcal{E})$ and $\cov(\mathcal{E})\leq\pre_{ubd}$ hold). Hence, FAM-limits seem to work for Question \ref{que_last} by keeping $\non(\mathcal{E})$ and $\ee_{ubd}$ small.
		
		In \cite{KST}, they also introduced the poset $\widetilde{\mathbb{E}}$ instead of $\mathbb{E}$, which increases $\nonm$ as $\mathbb{E}$ does, but has FAM-limits,  which $\mathbb{E}$ does not have. (Cardona, Mej\'{\i}a and Uribe-Zapata \cite{Car23RIMS} proved that $\mathbb{E}$ increases $\non(\mathcal{E})$ and hence does not have FAM-limits.)
		Thus, if $\widetilde{\mathbb{E}}$ also had closed-ultrafilter-limits, by mixing all the three limit methods as in Theorem \ref{thm_p6fin} and \ref{thm_p_fin_prime_GCH}, it would seem to be possible to additionally separate $\ee_{ubd}$ and $\nonm$ by replacing $\mathbb{E}$ with $\widetilde{\mathbb{E}}$ through the iteration.
		In fact, Goldstern, Kellner, Mej\'{\i}a and Shelah \cite{GKMS_CM_and_evasion} proved that \textit{$\widetilde{\mathbb{E}}$ actually has (closed-)ultrafilter-limits}.

		However, recall that when constructing the names of ultrafilters at limit steps in the proof of Lemma \ref{lem_uf_const_all}, we resorted to the \textit{centeredness}, which $\widetilde{\mathbb{E}}$ \textit{does not have}. 
		We have no idea on how to overcome this problem without resorting to centeredness\footnote{This is the point where Goldstern, Kellner, Mej\'{\i}a and Shelah \cite{GKMS_CM_and_evasion} found a gap in their proof, mentioned in footnote \ref{foot_MR} in Section \ref{sec_int}.\label{foot_q}}.

		\begin{acknowledgements}
			The author thanks his supervisor J\"{o}rg Brendle for his invaluable comments and Diego~A. Mej\'{\i}a for his helpful advice. The author is also grateful to the anonymous referee for his/her corrections and suggestions.
		\end{acknowledgements}

	\end{document}